\newtheorem{definition}{Definition}[section]
\newtheorem{theorem}[definition]{Theorem}
\newtheorem{proposition}[definition]{Proposition}
\newtheorem{remark}[definition]{Remark}
\newtheorem{lemma}[definition]{Lemma}
\newtheorem{corollary}[definition]{Corollary}
\numberwithin{equation}{section}
\newenvironment{taggedhyp}[1]
{\taggedhypx}
{\endtaggedhypx}
\newenvironment{taggedprop}[1]
{\taggedpropx}
{\endtaggedpropx}
\DeclareMathAlphabet{\pazocal}{OMS}{zplm}{m}{n}
\newcommand{\B}{\mathbb{B}}
\newcommand{\R}{\mathbb{R}}
\newcommand{\N}{\mathbb{N}}
\newcommand{\Fpazo}{\pazocal{F}}
\newcommand{\Gpazo}{\pazocal{G}}
\newcommand{\Kpazo}{\pazocal{K}}
\newcommand{\Mpazo}{\pazocal{M}}
\newcommand{\Qpazo}{\pazocal{Q}}
\newcommand{\Rpazo}{\pazocal{R}}
\newcommand{\Spazo}{\pazocal{S}}
\newcommand{\Opazo}{\pazocal{O}}
\newcommand{\Mcal}{\mathcal{M}}
\newcommand{\Lcal}{\mathcal{L}}
\newcommand{\Pcal}{\mathcal{P}}
\newcommand{\Kcal}{\mathcal{K}}
\newcommand{\Id}{\textnormal{Id}}
\newcommand{\supp}{\textnormal{supp}}
\newcommand{\sign}{\textnormal{sign}}
\newcommand{\Lip}{\textnormal{Lip}}
\newcommand{\AC}{\textnormal{AC}}
\newcommand{\Graph}{\textnormal{Graph}}
\newcommand{\Div}{\textnormal{div}}
\newcommand{\graph}{\mathrm{graph}}
\newcommand{\dsf}{\textnormal{\textsf{d}}}
\newcommand{\Bnu}{\boldsymbol{\nu}}
\renewcommand{\epsilon}{\varepsilon}
\newcommand{\INTDom}[3]{\int_{#2} #1 \,\textnormal{d} #3}
\newcommand{\INTSeg}[4]{\int_{#3}^{#4} #1 \,\textnormal{d} #2}
\newcommand{\NormL}[3]{\parallel \hspace{-0.075cm} #1 \hspace{-0.075cm} \parallel_{L^{#2}(#3)}}
\newcommand{\NormC}[3]{\left\| #1  \right\| _ {C^{#2}(#3)}}
\newcommand{\Norm}[1]{\parallel \hspace{-0.1cm} #1 \hspace{-0.1cm} \parallel}
\newcommand{\derv}[2]{\frac{\textnormal{d} #1}{ \textnormal{d} #2}}
\newcommand{\Liminf}[1]{\underset{~ #1}{\textnormal{Liminf}}}
\newcommand{\argmin}[1]{\underset{~ #1}{\textnormal{argmin}}}
\DeclareMathOperator*{\esssup}{ess\,sup}
\DeclareMathOperator*{\dom}{dom}
\DeclareMathOperator*{\Ima}{Im}
\title{Viability Theory in the $1$-Wasserstein Space}
\author{Benoît Bonnet-Weill\footnote{Laboratoire des Signaux et Systèmes, Université Paris-Saclay, CNRS, CentraleSupélec, 91190 Gif-sur-Yvette, France. \textit{Email:} \texttt{benoit.bonnet-weill@centralesupelec.fr}}\;, Alberto Dom\'ingez Corella\footnote{IMJ-PRG, UMR 7586, Sorbonne Université, 4 place Jussieu, 75252 Paris, France}\; and Hélène Frankowska\footnote{CNRS, IMJ-PRG, UMR 7586, Sorbonne Université, 4 place Jussieu, 75252 Paris, France.}}
\begin{document}

\maketitle

\begin{abstract}
In this article, we establish necessary and sufficient viability conditions for continuity inclusions over the 1-Wasserstein space. Depending on the regularity properties of the dynamics, we derive two results which are based on fairly different proof strategies. When the admissible velocities are Lipschitz in the measure variable, we show that it is necessary and sufficient for viable solutions to exist that the latter intersect the graphical derivative of the constraints. On the other hand, when the admissible velocities are merely upper semicontinuous in the measure variable, we provide a sufficient condition for viability involving the infinitesimal behaviour of their Aumann integral over a neighbouring set of measures.
\end{abstract}

{\footnotesize
\textbf{Keywords :} Viability, Optimal Transport, Differential Inclusions, Set-Valued Analysis.

\vspace{0.25cm}

\textbf{MSC2020 Subject Classification :} 28B20, 34G25, 46N20, 49Q22.
}

\tableofcontents


\section{Introduction}

Dynamical systems subject to state constraints are omnipresent in natural and social sciences, ranging from mechanical engineering, biology or ecology to complex networks like power grids or social media, see e.g. the excellent survey \cite{AubinSurvey1990} and the monograph \cite{Aubin2011}. In general, such state constraints can be represented e.g. by manifolds (with or without boundaries) with fairly explicit parametrisations, or given more implicitly as abstract sets arising from modelling considerations. In this context, it is very natural to try to answer the following question: given a dynamical system, when can one claim that its trajectories satisfy the prescribed constraints? The same interrogations carry over to multivalued dynamical systems, the most prominent classes of which being control systems and differential inclusions. Here, because of the potential multiplicity of solutions issued from the same initial data, one rephrases the satisfiability of state constraints as a \textit{viability} problem. The latter, studied originally (although with a different terminology) in \cite{Bebernes1970,Nagumo1942}, aims at providing necessary and sufficient conditions ensuring that at least one a solution of a multivalued dynamics starting from within a given feasible set remains within the latter. In some contexts, one may likewise be interested by the stronger notion of \textit{invariance}, namely whether all trajectories starting from a given set of constraints satisfy said constraints at all subsequent times. In addition to its relevance for studying broad classes of real-life models, the theory of viability provides some of the most powerful tools to study the well-posedness of Hamilton-Jacobi-Bellman equations with fairly irregular data, as established in the reference works \cite{Frankowska1993,Frankowska1996,Frankowska1995} concerned with dynamical systems over $\R^d$. The main concepts of HJB theory have since then been transposed to the 2-Wasserstein space in the ground-breaking paper \cite{HJBWasserstein}, and given the infatuation for this topic evidenced e.g. by the references \cite{DaudinS2024,Gangbo2019,Jimenez2024} and their dense bibliographies, it appears now crucial to try and transpose such viability methods to dynamics in measure spaces.

For these reasons, our goal in this work is to investigate the viability of state constraints for dynamical systems in the space of probability measures, following the tried optimal transport approaches developed in \cite{AGS} and many subsequent works. As things stand, there are a few contributions pertaining to viability in mean-field control theory, notably in \cite{Averboukh2018,CavagnariMQ2021} for decentralised problems, and in \cite{ViabPp} (see also \cite{ViabCDC}) by the first and third author, which study the viability and invariance of solutions of \textit{continuity inclusions}, which were introduced in \cite{ContInc,ContIncPp}. More precisely, in \cite{ViabPp}, we proved general necessary and sufficient conditions for the viability of time-dependent constraint sets under continuity inclusions in $p$-Wasserstein spaces $\Pcal_p(\R^d)$, whenever $p > 1$. In this context, the Wasserstein metrics enjoyed powerful superdifferentiability estimates described, e.g., in \cite[Chapter 10]{AGS}, which allowed -- together with the Cauchy-Lipschitz regularity assumptions posited therein -- to characterise the viability of the constraints in terms of intersections between the admissible velocities of the continuity inclusion and adequate tangent directions to the constraints. Those contributions, however, were not applicable to the $1$-Wasserstein space, whose geometry is not as smooth as that of its higher-order relatives, but also limited to differential inclusions with sufficiently nice right-hand sides, which typically do not fit the framework of viscosity solutions for HJB equations. The aim of the present work is to address both of these pitfalls, by proposing a construction based on transfinite induction which substantially generalises the existing ``smoother'' approaches. 

Given a set-valued mapping $V : [0,T] \times \Pcal_1(\R^d) \rightrightarrows C^0(\R^d,\R^d)$ whose images are convex compact subsets of Lipschitz and sublinear vector fields, we say following \cite{ContInc,ContIncPp} that an absolutely continuous curve $\mu : [0,T] \to \Pcal_1(\R^d)$ is a solution of the \textit{continuity inclusion} 
\begin{equation}
\label{eq:IntroCI}
\partial_t \mu(t) \in - \Div_x \Big( V(t,\mu(t)) \, \mu(t) \Big) 
\end{equation}
whenever there exists a Lebesgue measurable velocity selection $t \in [0,T] \mapsto v(t) \in V(t,\mu(t))$ for which the standard continuity equation
\begin{equation*}
\partial_t \mu(t) + \Div_x (v(t)\mu(t)) = 0
\end{equation*}
holds in the sense of distributions. Throughout the paper, our goal is to provide necessary and sufficient conditions under which a collection of time-dependent constraint sets modelled by a set-valued mapping $\Qpazo : [0,T] \rightrightarrows \Pcal_1(\R^d)$ is \textit{viable} for \eqref{eq:IntroCI}, that is, such that for each $\tau \in [0,T]$ and every $\mu_{\tau} \in \Qpazo(\tau)$, there exists a solution $\mu : [\tau,T] \to \Pcal_1(\R^d)$ of the latter dynamics such that
\begin{equation*}
\mu(\tau) = \mu_{\tau} \qquad \text{and} \qquad \mu(t) \in \Qpazo(t)    
\end{equation*}
for all times $t \in [\tau,T]$. The relevant geometric object appearing in these conditions is the so-called \textit{graphical derivative} of $\Qpazo : [0,T] \rightrightarrows \Pcal_1(\R^d)$ at some element $(\tau,\nu) \in \Graph(\Qpazo)$, defined by 
\begin{equation}
\label{eq:IntroGraphicalDer}
D\Qpazo(\tau\vert \nu) := \left\lbrace \xi\in L^1(\mathbb R^d, \mathbb R^d; \nu) ~\,\textnormal{s.t.}~ \, \liminf_{h\to 0^+}\frac{1}{h} W_1\Big((\Id + h\xi)_{\sharp}\nu \,; \Qpazo(t+h)\Big) = 0 \right\rbrace.
\end{equation}
Heuristically, this set contains all the admissible directions $\xi$ for which the pair $(1,\xi)$ is metrically tangent to the graph of the constraints. Note in particular that when the set $\Qpazo$ is fixed, its graphical derivative at any $(\tau,\nu) \in [0,T] \times \Qpazo$ reduces to the usual metric Bouligand cone considered e.g. in \cite{Badreddine2022,ViabPp,Lanzetti2024bis}. 

\subsubsection*{Main contributions} 

In the aforedescribed context, we develop in the present paper two fairly different strategies ensuring the existence of viable curves, depending on the regularity of the admissible velocities, and inspired respectively by \cite{ViabPp} and \cite{Frankowska1995}. 


\paragraph*{Viability in the Lipschitz case.}

We start our investigations with the situation in which the velocity map $\mu \in \Pcal_1(\R^d) \rightrightarrows V(t,\mu) \subseteq C^0(\R^d,\R^d)$ is Lipschitz continuous with respect to an adequate norm over the space of sublinear vector fields. In this context, we prove in Theorem \ref{Thm1} that an absolutely continuous constraint mapping $\Qpazo : [0,T] \rightrightarrows \Pcal_1(\R^d)$ with nonempty proper values is viable for \eqref{eq:IntroCI} if and only if 
\begin{equation}
\label{eq:IntroViabLip}
D Q(\tau | \nu) \cap V(\tau,\nu) \neq \emptyset
\end{equation}
for $\Lcal^1$-almost every $\tau \in [0,T]$ and all $\nu \in \Qpazo(\tau)$. The main difference between condition \eqref{eq:IntroViabLip} and those derived in \cite{ViabPp} is that, in the latter work, it was possible to take the convex hull of the graphical derivatives (both in time and space), providing thus a larger set of admissible directions to ensure viability. As previously mentioned, this was made possible thanks to the strong superdifferentiability properties enjoyed by the $p$-Wasserstein distances for $p > 1$, which, as illustrated in Appendix \ref{App1} below, do not hold for the $1$-Wasserstein distance. 

The proof that viability is characterised by \eqref{eq:IntroViabLip} in the Lipschitz framework follows the methodology from \cite{ViabPp}, and relies on Gr\"onwall-type estimates between the reachable sets of the continuity inclusion \eqref{eq:IntroCI} and the constraints. While similar in spirit to the latter work, it incidentally required a couple of new ideas to handle the lack of smoothness of the $1$-Wasserstein distance. 


\paragraph*{Viability in the upper semicontinuous case.}	

In this second part, which is the core of the manuscript, we assume that the set-valued mapping $\mu \in \Pcal_1(\R^d) \rightrightarrows V(t,\mu) \in C^0(\R^d,\R^d)$ is merely upper semicontinuous (see Definition \ref{def:Continuity} below) with respect to the topology of uniform convergence on compact sets. This regularity framework, which loosely corresponds to the closedness of the graph of $V(t,\cdot)$, is the weakest one in which one may hope to prove the existence of solutions to a differential inclusion, even in finite dimensional vector spaces, see e.g. \cite[Chapter 2]{Aubin1984}. In this context, which was investigated in the reference work \cite{Frankowska1995} for classical differential inclusions, we establish in Theorem \ref{Thm4} that a left-absolutely continuous constraint mapping $\Qpazo : [0,T] \rightrightarrows \Pcal_1(\R^d)$ with nonempty proper values is viable for \eqref{eq:IntroCI} whenever
\begin{equation}
\label{eq:IntroViabUSC}
D \Qpazo(\tau\vert \nu) \cap \Liminf{h \to 0^+} \, \frac{1}{h} \INTSeg{V \big(s,\B(\nu,r) \big)}{s}{\tau}{\tau+h} \neq \emptyset
\end{equation}
for $\Lcal^1$-almost every $\tau \in [0,T]$, all $\nu \in \Qpazo(\tau)$ and each $r > 0$, where the limit is understood in the sense of Kuratowski-Painlevé and the integral in that of Aumann, see e.g. \cite[Chapters 1 and 8]{Aubin1990} respectively. This condition amounts to the existence of measurable selections $s \in [\tau,T] \mapsto v_{h}(s) \in V(s,\B(\nu,r))$ indexed by $h > 0$ such that
\begin{equation*}
\liminf_{h\to 0^+}\frac{1}{h} W_1\Big( \big(\Id + \mathsmaller{\INTSeg{v_{h}(s)}{s}{\tau}{\tau+h}} \big)_{\sharp} \, \nu \,; \Qpazo(\tau+h)\Big) = 0.
\end{equation*}
To provide some intuition, condition \eqref{eq:IntroViabUSC} means that, up to enlarging the set of the admissible velocities by looking at a small Wasserstein ball around a point inside the constraint set, there should always be some direction $\xi_h := \INTSeg{v_{h}(s)}{s}{\tau}{\tau+h}$ given as the time-integral of an admissible field, such that $\{\xi_h\}_{h > 0}$ converges to $\xi$ with  $(1,\xi)$ being tangent to the graph of the constraints. Hence, this condition is strictly less stringent than requiring that the intersection be nonempty for velocities taken solely at $\nu \in \Qpazo(\tau)$. 

The strategy for proving Theorem \ref{Thm4} is inspired by the general scheme of \cite[Section 4]{Frankowska1995} -- although its implementation in Wasserstein spaces is substantially harder --, and relies on astutely fashioning a sequence of trajectories which approximately solve \eqref{eq:IntroCI}, while being close to the images of $\Qpazo : [0,T] \rightrightarrows \Pcal_1(\R^d)$. This construction, combined with a compactness argument, provides then a viable curve in the limit. The existence of such a sequence is obtained by applying Zorn's lemma to a carefully chosen collection of triples, comprised of a time horizon, a countable family of intervals and a curve, endowed with a suitable pre-order. We would like to stress that while these results are stated and proven for dynamics over $\Pcal_1(\R^d)$ for the sake of continuity in the exposition, our proof strategy can be transposed almost verbatim to any $p$-Wasserstein space with $p \in [1,+\infty)$. 

\medskip

The manuscript is structured as follows. In Section \ref{section:Preli}, we start by recollecting preliminary notions about abstract integration, optimal transport and dynamics in measure spaces. Then, in Section \ref{section:Lip}, we characterise the viability of absolutely continuous constraint sets when the admissible velocities are Lipschitz continuous in the measure variable, and subsequently treat the more challenging case of upper semicontinuous admissible velocities in Section \ref{section:USC}. Lastly, we provide a couple of appendices whose content aims at shedding light on different parts of the article, and improving its self-containedness. 


\section{Preliminaries}
\label{section:Preli}
Throughout this preliminary section, we recall basic results and concepts from functional analysis, optimal transport, and set-valued dynamics in measure spaces.

\subsection{Function spaces and integration}


\paragraph*{Spaces of continuous and differentiable functions.}
In what follows, we consider \((\mathbb{R}^d, |\cdot|)\) equipped with the usual Euclidean structure. Given a nonempty compact set $K\subseteq\mathbb R^d$, we denote by $(C^0(K,\R^d),\NormC{\cdot}{0}{K,\R^d})$ the separable Banach space of continuous functions from $K$ into $\mathbb R^d$ endowed with the usual supremum norm.  Its strong dual $\mathcal M(K,\mathbb R^d):=C^0(K,\mathbb R^d)^*$ can be identified with the space of $d$-tuples of signed Radon measures on $K$, equipped with the duality pairing 
\begin{equation*}
\langle \mu, \varphi \rangle_{C^0(K,\mathbb R^d)} := \sum_{i=1}^d\int_{K}\varphi_i(x) \mathrm{d}\mu_{i}(x). 
\end{equation*}
More generally, we denote by $C^0(\mathbb{R}^d, \mathbb{R}^d)$ the space of continuous functions from \(\mathbb{R}^d\) into itself. The latter is classically endowed with the \textit{compact-open topology}, i.e., the coarsest topology that makes each mapping
\begin{equation*}
\iota_K: v \in C^0(\mathbb R^d,\mathbb R^d)  \mapsto  v_{|{K}} \in C^0(K,\mathbb R^d)
\end{equation*}
continuous for every nonempty compact set $K\subseteq \mathbb R^d$. It is a standard result that this topology coincides with the topology of uniform convergence on compact sets (see, e.g., \cite{Warner1958} for more details), which is separable and completely metrisable, e.g., by the translation invariant metric given by
\begin{equation*}
\dsf_{cc}(v,w):=\sum_{k=1}^{+\infty}\frac{1}{2^k} \bigg( \frac{\left\| v - w\right\|_{C^0(B(0,k),\mathbb R^d)} }{1 + \left\| v - w\right\|_{C^0(B(0,k),\mathbb R^d)}} \bigg) 
\end{equation*}
for every $v,w \in C^0(\R^d,\R^d)$. This turns $(C^0(\R^d,\R^d),\dsf_{cc}(\cdot,\cdot))$ into a separable Fréchet space, whose continuous dual $\mathcal M_c(\mathbb R^d,\mathbb R^d) := C^0(\mathbb R^d,\mathbb R^d)^*$ can be identified with the space of $d$-tuples of signed Radon measures with compact support, endowed with the duality pairing 
\begin{equation*}
\langle \mu, \varphi \rangle_{C^0(\mathbb R^d,\mathbb R^d)} := \sum_{i=1}^d\int_{\mathbb R^d}\varphi_i(x) \mathrm{d} \mu_{i}(x).
\end{equation*}
We refer, e.g., to \cite[Proposition 14, Page 155]{Bourbaki2007} for further details on this duality relation. We also consider the total variation norm\footnote{One should note that although the total variation norm induces a topology over $\Mcal_c(\R^d,\R^d)$, the latter is strictly stronger than its natural weak-$^*$ topology which is not normable.} over $\Mcal_c(\R^d,\R^d)$, given by 
\[
\| {\mu} \|_{\mathrm{TV}} := \sum_{i=1}^d |\mu_i|(\mathbb{R}^d),
\]
where $|\mu_i|(\mathbb R^d)$ denotes the total variation of the signed measure $\mu_i \in \Mcal_c(\R^d,\R)$.  

In what follows, we shall also consider the Banach space $(C^0_b(\mathbb R^d,\mathbb R^d),\Norm{\cdot}_{C^0_b(\R^d,\R^d)})$ of continuous bounded functions from $\mathbb R^d$ into itself, endowed with the usual supremum norm. Similarly, we let $C^0_{sl}(\mathbb R^d,\mathbb R^d)$ be the space of sublinear continuous functions, comprised of those functions $\varphi\in C^0(\mathbb R^d,\mathbb R^d)$ for which 
\begin{equation*}
\left\|\varphi\right\|_{C_{sl}^0(\mathbb R^d,\mathbb R^d)}:=\sup_{x\in\mathbb R^d}\frac{|\varphi(x)|}{1+|x|} < +\infty.
\end{equation*}
It can be checked that $(C^0_{sl}(\R^d,\R^d),\Norm{\cdot}_{C^0_{sl}(\R^d,\R^d)})$ is a Banach space, and that the following two embeddings are continuous
\begin{equation*}
\big(C_b^0(\mathbb R^d,\mathbb R^d), \|\cdot\|_{C^0_b(\mathbb R^d,\mathbb R^d)}\big)\longhookrightarrow 	\big(C_{sl}^0(\mathbb R^d,\mathbb R^d), \|\cdot\|_{C_{sl}^0(\mathbb R^d,\mathbb R^d)}\big)\longhookrightarrow 	\big(C_{}^0(\mathbb R^d,\mathbb R^d), \dsf_{cc}(\cdot,\cdot)\big).
\end{equation*}
The set of smooth functions \(\varphi: \mathbb{R}^d \to \mathbb{R}\) with compact support -- commonly referred to as the space of test functions -- is denoted by $C_c^\infty(\mathbb{R}^d,\R)$, whereas $\Lip(\R^d,\R^d)$ stands for space of Lipschitz continuous functions from $\mathbb R^d$ into itself. 


\paragraph*{Bochner integration.}

Given a bounded closed subinterval of the real line $I \subseteq \R$ equipped with the standard Lebesgue measure $\Lcal^1$ and a real Banach space $(X,\Norm{\cdot}_X)$, we denote by $\Spazo(I,X)$ the space of so-called $\Lcal^1$-measurable \textit{simple functions}, i.e. those functions which only take a finite number of values. The integral of an element $s\in\Spazo(I,X)$ is given by
\begin{equation*}
\INTDom{s(t)}{I}{t} := \sum_{x \in \Ima(s)} \Lcal^1(s^{-1}(x)) x.
\end{equation*}
We recall that a sequence $\{f_n\}_{n\in\mathbb N}$ of functions $f_n:I\to X$ converges $\Lcal^1$-almost everywhere to some $f:I\to X$ provided that
\begin{equation*}
\Lcal^1 \Big(\Big\{ t \in I ~\, \textnormal{s.t.}~ \|f_n(t) - f(t) \|_X \underset{n \to +\infty}{\centernot\longrightarrow 0} \Big\} \Big) = 0. 
\end{equation*}
In this context, a function $f:I\to X$ is said to be \textit{strongly measurable} if it is the $\Lcal^1$-almost everywhere limit of a sequence $\{s_i\}_{i\in\mathbb N}\subseteq\Spazo(I,X)$. Similarly, we shall say that $f : I \to X$ is \textit{scalarly measurable} if for every $\Bnu \in X^*$, the mapping 
\begin{equation*}
t \in I \mapsto \langle \Bnu , f(t) \rangle_X \in \R
\end{equation*}
is $\Lcal^1$-measurable in the usual sense. It can be easily seen that strongly measurable functions are automatically scalarly measurable, and that both notions coincide with the usual one involving preimages of Borel sets when the Banach space $(X,\Norm{\cdot}_X)$ is separable (see, e.g., \cite[Chapter II -- Theorem 2]{DiestelUhl}). In the next definition, we recollect some basic terminology and facts about Bochner integration, a detailed account of which can be found, e.g., in \cite[Chapter II]{DiestelUhl} or in the more modern reference \cite[Chapter 1]{AnalysisBanachSpaces}. 

\begin{definition}[Space of Bochner integrable functions]
We say that a map $f : I \to X$ is \textit{Bochner integrable} if it is strongly measurable and such that
\begin{equation*}
\|f\|_{L^1(I,X)} := \int_{I} \|f(t)\|_X\, \,\mathrm{d}t<+\infty.
\end{equation*}
We denote by {$(L^1(I,X),\NormL{\cdot}{1}{I,X})$} the corresponding Banach space of all Bochner integrable functions, and note that the latter is separable whenever $(X,\Norm{\cdot}_X)$ is itself separable. The Bochner integral of an element $f \in L^1(I,X)$ can be classically defined e.g. by 
\begin{equation*}
\INTDom{f(t)}{I}{t} := \lim_{n \to +\infty} \INTDom{s_n(t)}{I}{t}
\end{equation*}
for any sequence $\{s_n\}_{n \in \N} \subseteq \Spazo(I,X)$ converging strongly towards $f \in L^1(I,X)$. 
\end{definition}

In keeping with what precedes, we shall say that a map $\Bnu: I \to X^*$ is \textit{scalarly-$^*$ measurable} provided that 
\begin{equation*}
t \in I \mapsto \langle \Bnu(t), x \rangle_{X} \in \R
\end{equation*}
is $\Lcal^1$-measurable for all $x\in X$. Below, we recall a simplified and adapted version of a deep compactness result {for the weak topology of $L^1(I,X)$,} excerpted from \cite[Corollary 2.6]{Diestel1993}.

\begin{theorem}[Weak compactness criterion in Bochner spaces]
\label{TheoremcompactnessBochner}
Let $\{f_n\}_{n\in\mathbb N} \subseteq L^1(I,X)$, and suppose that there exist $m(\cdot)\in L^1(I, \mathbb{R}_+)$ and a family $\{K_{t}\}_{t \in I}$ of compact subsets of $X$ such that 
\begin{equation*}
\sup_{n\in\mathbb N}\|f_n(t)\|_X \leq m(t) \qquad \text{and} \qquad \{f_n(t)\}_{n\in\mathbb N} \subseteq K_{t} 
\end{equation*}
for $\Lcal^1$-almost every $t \in I$. Then, there exists a subsequence $\{f_{n_k}\}_{k\in\mathbb N} \subseteq L^1(I,X)$ that converges weakly to some $f \in L^1(I,X)$, and in particular
\begin{equation*}
\int_{I} \langle \Bnu(t), f(t) - f_{n_k}(t) \rangle_X \, \,\mathrm{d}t \underset{k \to +\infty}{\longrightarrow} 0 
\end{equation*}
for every scalarly-$^*$ measurable  $\Bnu:I\to X^*$ satisfying  $\underset{t \in I}{\esssup} \|\Bnu(t)\|_{X^*} < +\infty$. 
\end{theorem}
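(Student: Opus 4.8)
The plan is to realise the claimed limit as the weak limit, in the Bochner space $L^1(I,X)$, of a suitable subsequence of $\{f_n\}_{n \in \N}$. Since each $f_n$ is strongly measurable, one may assume throughout that $X$ is separable, replacing it if necessary by the closed separable subspace generated by $\bigcup_{n \in \N} f_n(I)$ (away from an $\Lcal^1$-null set). The first, elementary, observation is that $\{f_n\}_{n \in \N}$ is uniformly integrable in $L^1(I,X)$: indeed $\sup_{n} \NormL{f_n}{1}{I,X} \leq \NormL{m}{1}{I,\R_+} < +\infty$, while for every measurable $A \subseteq I$ one has $\int_A \|f_n(t)\|_X \, \mathrm{d}t \leq \int_A m(t) \, \mathrm{d}t$, the right-hand side going to $0$ as $\Lcal^1(A) \to 0$ by absolute continuity of the integral of the fixed map $m$.

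The heart of the matter is to upgrade this uniform integrability into relative weak compactness of $\{f_n\}_{n \in \N}$ in $L^1(I,X)$ --- this is where the pointwise compactness hypothesis enters, and it is the only step requiring a genuinely nontrivial input. Up to replacing $K_t$ by a smaller set, one may assume $K_t = \overline{\operatorname{co}}\big( \overline{\{ f_n(t) : n \in \N \}} \big)$: the set $\overline{\{ f_n(t) : n \in \N \}}$ is compact by hypothesis, hence so is its closed convex hull by Mazur's theorem, and since the norm of $X$ is convex one still has $\sup_{x \in K_t} \|x\|_X \leq m(t)$. By separability of $X$, the multifunction $t \rightrightarrows K_t$ is $\Lcal^1$-measurable with convex norm-compact values satisfying $\sup_{x \in K_t} \|x\|_X \leq m(t)$, and each $f_n$ is one of its $L^1$-selections. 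The set of all such selections is weakly compact in $L^1(I,X)$: this is precisely the weak-compactness criterion of \cite[Corollary 2.6]{Diestel1993} (see also the classical treatment in \cite{DiestelUhl}). Note that, $X$ being possibly non-reflexive, the pointwise relative compactness assumption $\{ f_n(t) \}_{n} \subseteq K_t$ cannot be omitted here: it is what plays, in this setting, the role of the Dunford--Pettis theorem.

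Granted the relative weak compactness of $\{f_n\}_{n \in \N}$, the Eberlein--\v{S}mulian theorem \cite{DiestelUhl} furnishes a subsequence $\{f_{n_k}\}_{k \in \N}$ converging weakly in $L^1(I,X)$ to some $f \in L^1(I,X)$. It remains to identify the action of the functionals appearing in the statement. Let $\Bnu : I \to X^*$ be scalarly-$^*$ measurable with $\esssup_{t \in I} \|\Bnu(t)\|_{X^*} < +\infty$. For any $g \in L^1(I,X)$, approximating $g$ $\Lcal^1$-almost everywhere by simple functions $s_j \in \Spazo(I,X)$ shows that $t \mapsto \langle \Bnu(t), g(t) \rangle_X$ is $\Lcal^1$-measurable --- each $t \mapsto \langle \Bnu(t), s_j(t) \rangle_X$ is measurable by scalar-$^*$ measurability of $\Bnu$, and these converge pointwise almost everywhere to $t \mapsto \langle \Bnu(t), g(t)\rangle_X$ because $|\langle \Bnu(t), s_j(t) - g(t)\rangle_X| \leq \|\Bnu(t)\|_{X^*}\, \|s_j(t) - g(t)\|_X$ --- and moreover $|\langle \Bnu(t), g(t)\rangle_X| \leq \big(\esssup_{I}\|\Bnu\|_{X^*}\big)\, \|g(t)\|_X$ is integrable. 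Hence $g \mapsto \int_I \langle \Bnu(t), g(t)\rangle_X \,\mathrm{d}t$ is a bounded linear functional on $L^1(I,X)$, and evaluating it along the weakly convergent subsequence gives $\int_I \langle \Bnu(t), f(t) - f_{n_k}(t)\rangle_X \,\mathrm{d}t \to 0$, which is the desired conclusion.

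The uniform integrability estimate, the Eberlein--\v{S}mulian extraction, and the identification of the dual pairing are all routine; the genuine obstacle is the weak-compactness step for $L^1(I,X)$ when $X$ is not reflexive, for which a self-contained argument would reproduce the (deep) machinery of \cite{Diestel1993}.
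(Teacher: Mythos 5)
Your argument is correct and is essentially the paper's own: the paper offers no independent proof of this statement but simply imports it as a consequence of \cite[Corollary 2.6]{Diestel1993}, which is exactly the deep weak-compactness input you invoke for the key step. The remaining ingredients in your write-up (uniform integrability, the Eberlein--\v{S}mulian extraction, and checking that $g \mapsto \int_I \langle \Bnu(t), g(t)\rangle_X \,\mathrm{d}t$ is a bounded linear functional on $L^1(I,X)$) are routine and correctly handled.
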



\paragraph*{Integration in the space of continuous functions.}

For a given nonempty compact set $K\subseteq \mathbb R^d$, it follows from the previous discussions that the Bochner integral of an element $v\in L^1(I,C^0(K,\mathbb R^d))$ can be computed as 
\begin{equation*}
\bigg( \int_{I} v(t) \,\mathrm{d}t \bigg)(x) = \int_I v(t,x) \,\mathrm{d}t
\end{equation*}
for all $x\in\mathbb R^d$. This serves as the starting point for introducing a concept of integrability for mappings valued in the space $C^0(\R^d,\R^d)$ of continuous functions defined over the whole of $\R^d$.

\begin{definition}[Bochner integral of $C^0(\R^d,\R^d)$-valued maps]
\label{def:IntegralC0}
We say that a function $v:I\to C^0\big(\mathbb R^d,\mathbb R^d\big)$ is integrable if its restrictions $v_{|K} : I \to C^0(K,\mathbb R^d)$ are Bochner integrable for every nonempty compact set $K\subseteq \mathbb R^d$. In that case, we define its integral by
\begin{equation*}
\bigg( \int_{I}v(t) \,\mathrm{d}t \bigg)(x) := \int_I v(t,x) \,\mathrm{d}t,
\end{equation*}
and observe that $\big( \int_I v(t) \,\mathrm{d}t \big)_{|K} = \int_I v(t)_{|K} \,\mathrm{d}t$ for every nonempty compact set $K \subseteq \R^d$.
\end{definition}

Following previously introduced terminology, a map $\Bnu: I \to \Mcal_c(\R^d,\R^d)$ is scalarly-$^*$ measurable provided that 
\begin{equation*}
t \in I \mapsto \langle \Bnu(t), \varphi \rangle_{C^0(\R^d,\R^d)} \in \R
\end{equation*}
is $\Lcal^1$-measurable for all $\varphi \in C^0(\R^d,\R^d)$.   Employing Theorem \ref{TheoremcompactnessBochner} along with a standard diagonal argument over an exhausting family of compact subsets of $\mathbb R^d$, one can obtain the following handy weak compactness criterion. 

\begin{lemma}[Weak $L^1$-compactness criterion for $C^0(\R^d,\R^d)$-valued maps]
\label{lem:comvf}
Let $\{v_n(\cdot)\}_{n\in\mathbb N}$ be a sequence of Bochner integrable functions $v_n:I\to C^0\big(\mathbb R^d,\mathbb R^d\big)$ and suppose that the following holds. 
\begin{itemize}
\item[$(i)$] There exists a map $m(\cdot)\in L^1(I,\mathbb R_+)$ such that 
\begin{equation*}
\sup_{n \in \N} |v_{n}(t,x)|\,\leq m(t)\big(1+|x|\big)
\end{equation*}
for $\Lcal^1$-almost every $t\in I$ and all $x\in\mathbb R^d$.
\item[$(ii)$] The family $\{v_n(t)\}_{n\in\mathbb N}$ is locally uniformly equicontinuous for $\Lcal^1$-almost every $t\in I$. 
\end{itemize} 
Then, there exists a subsequence $\{v_{n_k}(\cdot)\}_{k\in\mathbb N}$ and a Bochner integrable function $v:I\to C^0(\mathbb R^d,\mathbb R^d)$ such that
\begin{equation*}
\int_I  \langle \Bnu(t) , v(t) - v_{n_k}(t)\rangle_{C^0(\mathbb R^d,\mathbb R^d)}\,\mathrm{d}t \underset{k \to +\infty}{\longrightarrow} 0
\end{equation*}
for every scalarly-$^*$ measurable $\Bnu: I\to \mathcal M_c(\mathbb R^d,\mathbb R^d)$ with $\underset{t \in I}{\esssup} \int_{\mathbb R^d}\big(1+|x|\big)\, \mathrm{d}|\Bnu(t)|(x) < +\infty$. 
\end{lemma}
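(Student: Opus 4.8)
The plan is to exhaust $\R^d$ by the closed balls $K_j := \overline{B(0,j)}$, apply the weak compactness result of Theorem~\ref{TheoremcompactnessBochner} to the restricted sequences $\big\{ v_n(\cdot)_{|K_j} \big\}_{n \in \N}$ inside $L^1(I,C^0(K_j,\R^d))$, and glue the resulting limits together along a diagonal extraction over $j$. To invoke Theorem~\ref{TheoremcompactnessBochner} at a fixed level $j$, I would first observe that hypothesis $(i)$ supplies the dominating function $m_j(t) := (1+j)\,m(t) \in L^1(I,\R_+)$, since $\NormC{v_n(t)_{|K_j}}{0}{K_j,\R^d} \le (1+j)m(t)$ for $\Lcal^1$-a.e. $t$ and every $n$; and that for every $t$ outside the $\Lcal^1$-null set on which $(i)$ or $(ii)$ fails, the family $\{v_n(t)_{|K_j} : n \in \N\}$ is bounded and equicontinuous in $C^0(K_j,\R^d)$, hence relatively compact by the Arzelà--Ascoli theorem, so one may take for the compact set $K^j_t$ its closure. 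Applying Theorem~\ref{TheoremcompactnessBochner} for $j = 1$, then along the resulting subsequence for $j = 2$, and so on, and passing to the diagonal, yields one subsequence --- still written $\{v_{n_k}(\cdot)\}_{k \in \N}$ --- together with functions $w_j \in L^1(I,C^0(K_j,\R^d))$ such that $v_{n_k}(\cdot)_{|K_j} \rightharpoonup w_j$ weakly in $L^1(I,C^0(K_j,\R^d))$ for every $j$.

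I would then assemble the limit. Because restriction $C^0(K_{j+1},\R^d) \to C^0(K_j,\R^d)$ is linear and continuous and weak limits are unique, $w_{j+1}(t)_{|K_j} = w_j(t)$ for $\Lcal^1$-a.e. $t$; discarding one null set, the $w_j(t)$ glue into a well-defined $v(t) \in C^0(\R^d,\R^d)$ with $v(t)_{|K_j} = w_j(t)$. As every compact $K \subseteq \R^d$ is contained in some $K_j$ and restriction is linear continuous, $v(\cdot)_{|K} = w_j(\cdot)_{|K}$ is Bochner integrable, so $v$ is integrable in the sense of Definition~\ref{def:IntegralC0}. To transfer the growth estimate, I would apply Mazur's lemma on each $K_j$ to obtain convex combinations of the $v_{n_k}(\cdot)_{|K_j}$ converging to $w_j$ in $L^1(I,C^0(K_j,\R^d))$, hence $\Lcal^1$-a.e. along a subsequence in $C^0(K_j,\R^d)$; since $(i)$ confines the values $v_n(t,x)$ to the fixed (in $n$) closed ball $\overline{B}\big(0,m(t)(1+|x|)\big)$ for a.e. $t$, this constraint survives the convex combinations and the pointwise limit, giving $|v(t,x)| \le m(t)(1+|x|)$ for $\Lcal^1$-a.e. $t \in I$ and all $x \in \R^d$, the exceptional null set being the countable union of those arising at each level $j$.

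The core of the proof is the weak-convergence statement. Given a scalarly-$^*$ measurable $\Bnu : I \to \Mcal_c(\R^d,\R^d)$ with $M := \esssup_{t \in I} \int_{\R^d}(1+|x|)\,\dn|\Bnu(t)|(x) < +\infty$ --- so that $\esssup_{t \in I} \| \Bnu(t) \|_{\mathrm{TV}} \le M$ --- I would fix cutoffs $\chi_j \in C_c^\infty(\R^d,[0,1])$ with $\chi_j \equiv 1$ on $B(0,j)$ and $\supp\chi_j \subseteq B(0,j+1)$, and decompose
\begin{equation*}
\big\langle \Bnu(t), v(t) - v_{n_k}(t)\big\rangle_{C^0(\R^d,\R^d)} = \big\langle \chi_j\Bnu(t), v(t) - v_{n_k}(t)\big\rangle_{C^0(K_{j+1},\R^d)} + \big\langle (1-\chi_j)\Bnu(t), v(t) - v_{n_k}(t)\big\rangle_{C^0(\R^d,\R^d)},
\end{equation*}
where $\chi_j\Bnu(t)$ denotes the measure $\chi_j\,\dn\Bnu(t)$, regarded as an element of $\Mcal(K_{j+1},\R^d) = C^0(K_{j+1},\R^d)^*$. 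A routine check --- pairing with $\psi \in C^0(K_{j+1},\R^d)$ and extending $\chi_j\psi$ by zero --- shows that $t \mapsto \chi_j\Bnu(t)$ is scalarly-$^*$ measurable into $C^0(K_{j+1},\R^d)^*$ and essentially bounded in total variation by $M$; hence, applying the conclusion of Theorem~\ref{TheoremcompactnessBochner} on $K_{j+1}$ with this test functional (which passes to our diagonal subsequence, being the vanishing of a real sequence), the first term tends to $0$ as $k \to +\infty$, for each fixed $j$. For the second term, $\supp(1-\chi_j) \subseteq \R^d \setminus B(0,j)$ and the bound $|v(t,x) - v_{n_k}(t,x)| \le 2m(t)(1+|x|)$ (from $(i)$ and the estimate just obtained) yield, uniformly in $k$,
\begin{equation*}
\bigg| \int_I \big\langle (1-\chi_j)\Bnu(t), v(t) - v_{n_k}(t)\big\rangle_{C^0(\R^d,\R^d)}\,\dn t \bigg| \le 2\int_I m(t)\,g_j(t)\,\dn t, \qquad g_j(t) := \int_{\{|x| \ge j\}}(1+|x|)\,\dn|\Bnu(t)|(x),
\end{equation*}
and since $g_j(t)\downarrow 0$ as $j \to +\infty$ for $\Lcal^1$-a.e. $t$, with $0 \le m(t)\,g_j(t) \le M\,m(t) \in L^1(I)$, dominated convergence makes the right-hand side vanish as $j \to +\infty$. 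Choosing first $j$ large and then $k$ large concludes.

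I expect the tail term of the last paragraph to be the main obstacle: since the supports of the measures $\Bnu(t)$ need not be uniformly bounded in $t$, the problem does not localise to a single compact set, and $g_j$ is not small uniformly in $t$. The way around this is to integrate in the time variable \emph{before} sending $j \to +\infty$, so that the first-moment bound $M$ becomes the $L^1(I)$-integrable majorant $M\,m(\cdot)$ and dominated convergence applies. A subordinate technical issue --- which I would settle by standard arguments exploiting the separability of $C^0(\R^d,\R^d)$ --- is the Lebesgue measurability (and, through the bound of the second paragraph, the $L^1$-integrability) of the maps $t \mapsto \langle \Bnu(t), v(t)\rangle_{C^0(\R^d,\R^d)}$ and $t \mapsto g_j(t)$, which is needed for the displayed integrals to be meaningful.
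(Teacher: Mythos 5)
Your proof is correct, and its first half (restriction to an exhausting family of balls, Arzel\`a--Ascoli to get the compact sets $K_t$, Theorem \ref{TheoremcompactnessBochner} at each level, diagonal extraction and gluing of the limits $w_j$ by weak continuity of the restriction operators) is exactly the paper's construction. Where you genuinely diverge is the final convergence step: the paper splits the \emph{time} domain, introducing $E_k := \{t \in I : \supp \Bnu(t) \subseteq B(0,k)\}$, using $\Lcal^1(I\setminus E_k) \to 0$ together with the absolute continuity of the Lebesgue integral to make the contribution of $I \setminus E_k$ small uniformly in $n$, and then invoking the level-$k$ weak convergence on $E_{k}$; you instead split the \emph{measure} spatially, writing $\Bnu(t) = \chi_j\Bnu(t) + (1-\chi_j)\Bnu(t)$, handling the truncated part by the level-$(j+1)$ weak convergence and the tail by the pointwise decay $g_j(t)\downarrow 0$ plus dominated convergence with majorant $M\,m(\cdot)$. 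Both routes rest on the same two facts (a.e.\ compact support of $\Bnu(t)$ and the uniform bound on $\int(1+|x|)\,\mathrm{d}|\Bnu(t)|$), and both leave a routine measurability point implicit (measurability of $E_k$ in the paper, of $g_j$ and of the pairings in yours, each settled by standard separability arguments). A small plus of your write-up is the explicit Mazur-lemma step yielding $|v(t,x)| \le m(t)(1+|x|)$, a bound the paper's tail estimate uses but does not derive; a small plus of the paper's route is that it avoids cutoff functions and the dominated-convergence step, trading them for the equi-integrability of $m(\cdot)$.
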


\begin{proof}
Let us begin by observing that for each radius $R >0$, the sequence of maps $\{f_n\}_{n\in\mathbb N}\subseteq L^1(I, C^0(B(0,R),\mathbb R^d))$ given by $f_n:=v_n(\cdot)_{|B(0,R)}$ satisfies
\begin{align*}
\sup_{n\in\mathbb N}\|f_n(t)\|_{C^0(B(0,R),\mathbb R^d)}\le m(t)(1+R) \qquad \text{and} \qquad \{f_n(t)\}_{n\in\mathbb N}\subseteq K_t:=\operatorname{cl}{\{v_n(t)|_{B(0,R)}\}_{n\in\mathbb N}}
\end{align*}
for $\Lcal^1$-almost every $t \in I$. By our local uniform equicontinuity assumption and the usual Ascoli-Arzelà theorem (see e.g. \cite[Theorem 11.28]{Rudin1987}), the set $K_t \subseteq C^0(B(0,R),\mathbb R^d)$ is compact for $\Lcal^1$-almost every $t \in I$. Thus, we see that the sequence $\{v_n(\cdot)_{|B(0,R)}\}_{n\in\mathbb N}$ satisfies the hypotheses of Theorem \ref{TheoremcompactnessBochner} for each $R > 0$.

Our goal now is to prove the result by means of a diagonal argument. Since $\{v_n|_{B(0,1)}\}_{n\in\mathbb N}$ satisfies the  hypotheses of Theorem \ref{TheoremcompactnessBochner}, one may find an increasing function $\sigma_1:\mathbb N\to\mathbb N$ and a map $v^1 \in L^1(I, C^0(B(0,1),\mathbb R^d)) $ such that  $\{v_{\sigma_1(n)}(\cdot)_{|B(0,1)}\}_{n\in\mathbb N}$ is a subsequence of $\{v_n(\cdot)_{|B(0,1)}\}_{n\in\mathbb N}$ and 
\begin{align}
\int_I \langle \Bnu(t), v_{\sigma_1(n)|B(0,1)}(t) \rangle  \, \mathrm{d}t \underset{n \to +\infty}{\longrightarrow} \int_I \langle \Bnu(t), v^1(t) \rangle \, \mathrm{d}t 
\end{align}
for every scalarly-$^*$ measurable $\Bnu: I \to \mathcal{M}(B(0,1), \mathbb{R}^d)$ with $\esssup_{t \in I} \|\Bnu(t)\|_{\mathcal M(B(0,1),\mathbb R^d)} < +\infty$. Upon repeatedly using Theorem \ref{TheoremcompactnessBochner} for $k\in\mathbb N$, one can find a subsequence $\{v_{\sigma_k\circ\cdots\circ\sigma_1(n)}(\cdot)_{|B(0,k)}\}_{n\in\mathbb N}$ of $\{v_{\sigma_{k-1}\circ\cdots\circ\sigma_1(n)}(\cdot)_{|B(0,k)}\}_{n\in\mathbb N}$ and a map $v^k\in L^1(I,C^0(B(0,k),\mathbb R^d))$ such that 
\begin{align*}
\int_I \big\langle \Bnu(t), v_{\sigma_{k} \circ \cdots \circ \sigma_1(n)}(t)_{|B(0,k)} \big\rangle \, \mathrm{d} t ~\underset{n \to +\infty}{\longrightarrow}~ \int_I \langle \Bnu(t), v^k(t)\rangle \,  \mathrm{d}t 
\end{align*}
for every scalarly-$^*$ measurable $\Bnu: I \to \mathcal{M}(B(0,k), \mathbb R^d)$ with $\esssup_{t \in I} \|\Bnu(t)\|_{\mathcal M(B(0,k),\mathbb R^d)} < +\infty$. Consider at present the diagonal extraction 
\begin{align*}
\tau_n: = \sigma_n\circ\cdots\circ \sigma_1(n), 
\end{align*}
and observe that $\{v_{\tau_n}(\cdot)\}_{n\in\mathbb N}$ is a subsequence of $\{v_{n}(\cdot)\}_{n\in\mathbb N}$ by construction. Define now the mapping $v:I\to C^0(\mathbb R^d,\mathbb R^d)$ for $\mathcal L^1$-almost every $t\in I$ by setting
\begin{align*}
v(t,x):= v^k(t,x)  
\end{align*}
whenever $x \in B(0,k)$, and note that $v:I\to C^0(\mathbb R^d,\mathbb R^d)$ is well defined and Bochner integrable by the above construction. Now, fix some scalarly-$^*$ measurable map $\Bnu: I \to \mathcal{M}_c(\mathbb R^d, \mathbb R^d)$ satisfying $\esssup_{t \in I} \int_{\mathbb R^d}(1+|x|)\,\mathrm{d}|\Bnu(t)|(x) < +\infty$. For each $k\in\mathbb N$, consider the set defined by 
\begin{align*}
E_k:=\Big\lbrace t\in I ~\,\textnormal{s.t.}~  \supp\,\Bnu(t)\subseteq B(0,k)\Big\rbrace, 
\end{align*}
and notice that $\mathcal L^1(I \setminus E_k)\to 0$ as $k\to+\infty$ since $\nu(t) \in\Mcal_c(\R^d,\R^d)$ for $\Lcal^1$-almost every $t \in I$. Thus, it follows from the absolute continuity of the Lebesgue integral that
\begin{align*}
\sup_{n\in\mathbb N}\int_{I\setminus E_k}\bigl|\langle\Bnu(t),v_{\tau_n}(t)-v(t)\rangle\bigr|\,\mathrm{d}t \leq 2 \esssup_{t \in I}\int_{\mathbb R^d}(1+|x|)\, \mathrm{d}|\Bnu(t)|(x)  \int_{I\setminus E_k} m(t)\,\mathrm{d}t ~\underset{k \to +\infty}{\longrightarrow}~ 0. 
\end{align*}
Consequently, there exists for every $\varepsilon>0$ an integer $k_{\epsilon}\in\mathbb N$ such that
\begin{align*}
\sup_{n\in\mathbb N} \int_{I\setminus E_{k_{\epsilon}}}\bigl|\langle\Bnu(t),v_{\tau_n}(t)-v(t)\rangle\bigr|\,\mathrm{d}t<\varepsilon.
\end{align*}
and recall that $\supp\,\Bnu(t)\subseteq B(0,{k_{\epsilon}})$ for $\mathcal L^1$-almost every $t\in E_{k_{\epsilon}}$. Thence 
\begin{equation*}
\begin{aligned}
\int_{E_{k_{\epsilon}}}\langle\Bnu(t),v_{\tau_n}(t)\rangle\,\mathrm{d}t  & = \int_{E_{k_{\epsilon}}}\langle\Bnu(t),v_{\tau_n}(t)|_{B(0,k_{\epsilon})}\rangle\,\mathrm{d}t \\
& \hspace{-0.3cm} \underset{n \to+\infty}{\longrightarrow}~	\int_{E_{k_{\epsilon}}}\langle\Bnu(t),v(t)|_{B(0,k_{\epsilon})}\rangle\,\mathrm{d}t = \int_{E_{k_{\epsilon}}}\langle\Bnu(t),v(t)\rangle\,\mathrm{d}t  . 
\end{aligned}
\end{equation*}
From the latter identity, we may easily infer that 
\begin{align*}
\limsup_{n \to +\infty}\bigg|\int_{I}\langle\Bnu(t),v_{\tau_n}(t)-v(t)\rangle\,\mathrm{d}t\bigg| \, \leq \varepsilon,
\end{align*}
which concludes the proof since $\varepsilon>0$ was arbitrary.
\end{proof}

\subsection{Optimal transport and the $1$-Wasserstein space}

In this second preliminary section, we recall some basic notations and results of optimal transport theory, borrowed mainly from \cite[Chapters 5 and 7]{AGS}. 


\paragraph*{The space of probability measures.}

We denote by \(\mathcal{P}(\mathbb{R}^d)\) the space of Borel probability measures on \(\mathbb{R}^d\) endowed with the usual \textit{narrow topology}, that is the coarsest topology for which the maps 
\begin{equation*}
\mu \mapsto  \int_{\mathbb{R}^d} \varphi(x) \, \mathrm{d} \mu(x)
\end{equation*}
are continuous for each $\varphi \in C^0_b(\mathbb{R}^d, \mathbb{R})$. It is well known (see e.g. \cite[Remark 5.1.2]{AGS}) that the latter coincides with the restriction of the weak-$^*$ topology of $C_b^0(\R^d)^*$ to the space of positive measures with unit mass. Given some \(\mu \in \mathcal{P}(\mathbb{R}^d)\), we denote by \(L^1(\mathbb{R}^d, \mathbb{R}^d; \mu)\) the separable Banach space of all {(equivalences classes of)} of {$\mu$-measurable} maps \(\xi : \mathbb{R}^d \to \mathbb{R}^d\) satisfying
\begin{equation*}
\|\xi\|_{L^1(\mathbb{R}^d, \mathbb{R}^d;\, \mu)}:= \int_{\mathbb R^d} |\xi(x)| \,\mathrm{d}\mu(x) < +\infty. 
\end{equation*}
Its continuous dual \(L^\infty(\mathbb{R}^d, \mathbb{R}^d; \mu) := L^1(\mathbb{R}^d, \mathbb{R}^d; \mu)^*\) can be identified with the space of \(\mu\)-essentially bounded functions via the standard duality pairing
\begin{equation*}
\langle w, v \rangle_{L^1(\mathbb{R}^d, \mathbb{R}^d ; \, \mu)} = \int_{\mathbb{R}^d} \langle w(x), v(x) \rangle \, \mathrm{d}\mu(x).
\end{equation*}
The image of an element \(\mu \in \mathcal{P}(\mathbb{R}^d)\) through a Borel map  \(\xi : \mathbb{R}^d \to \mathbb{R}^d\) is canonically defined by
\begin{equation*}
\xi_{\sharp} \mu \, (B) := \mu(\xi^{-1}(B))
\end{equation*}
for each Borel set $B \subseteq \R^d$. Since every $\mu$-measurable function $\xi : \R^d \to \R^d$ coincides with a Borel map outside of a $\mu$-negligible set (see e.g. \cite[Corollary 6.5.6]{BogachevI}), one can talk more generally about the image of $\mu \in \mathcal{P}(\mathbb{R}^d)$ through any element $\mu$-measurable $\xi:\mathbb R^d\to\mathbb R^d$, see also the discussion in \cite[Section 3.6 -- Page 191]{BogachevI}. 

Lastly, an element $\gamma\in\mathcal P(\mathbb R^{2d})$ is said to be a \textit{transport plan} -- or \textit{coupling} -- between two measures $\mu,\nu\in\mathcal P(\mathbb R^d)$ provided that 
\begin{equation*}
\pi^1_{\sharp}\gamma = \mu \qquad \text{and} \qquad \pi^2_{\sharp}\gamma = \nu,
\end{equation*}
where \(\pi^1, \pi^2 : \mathbb{R}^d \times \mathbb{R}^d \to \mathbb{R}^d\) are the projections onto the first and second components, respectively. 
The set of all such plans is then denoted by $\Gamma(\mu,\nu)$.


\paragraph*{The $1$-Wasserstein distance.}

Given a measure \(\mu \in \mathcal{P}(\mathbb{R}^d)\), we define its first moment by
\begin{equation*}
\Mpazo_1(\mu) := \int_{\mathbb{R}^d} |x| \mathrm{d}\mu(x)
\end{equation*}
and let \(\mathcal P_1(\mathbb{R}^d)\) be the set of all Borel probability measures whose first moment is finite. The \(1\)-Wasserstein distance between two measures \(\mu,\nu\in\mathcal P_1(\mathbb R^d)\) is given by
\begin{equation*}
W_1(\mu, \nu) := \inf \bigg\{ \int_{\mathbb{R}^{2d}} |x - y| \mathrm{d}\gamma(x, y) ~\, \textnormal{s.t.}~ \gamma \in \Gamma(\mu, \nu) \bigg\}, 
\end{equation*}
and in the sequel, we denote by 
\begin{equation*}
\Gamma_o(\mu,\nu) := \argmin{\gamma \in \Gamma(\mu, \nu)} \INTDom{|x-y|}{\R^{2d}}{\gamma(x,y)}.
\end{equation*}
the (nonempty) set of so-called \textit{optimal transport plans}. It is a basic fact in optimal transport theory that $(\mathcal P_1(\mathbb{R}^d), W_1(\cdot,\cdot))$ is a complete separable metric space, whose relatively compact subsets $\Kcal \subseteq \Pcal_1(\R^d)$ are characterised by the uniform integrability property  
\begin{equation}
\label{eq:RelativeCompactness}
\sup_{\mu \in \Kcal} \INTDom{|x|}{\{ x \;\textnormal{s.t.}\, |x| \geq k\}}{\mu(x)} ~\underset{k \to +\infty}{\longrightarrow}~ 0.
\end{equation}
Besides, it follows from the very definition of the $1$-Wasserstein distance that 
\begin{equation}
\label{eq:WassEst}
W_1(\xi_{\sharp} \mu, \zeta_{\sharp} \mu) \leq \|\xi - \zeta\|_{L^1(\mathbb{R}^d, \mathbb{R}^d; \, \mu)} \qquad \text{and} \qquad W_1(\phi_{\sharp} \mu,\phi_{\sharp} \nu) \leq \Lip(\phi) W_1(\mu,\nu)
\end{equation}
for every $\mu,\nu \in\mathcal P_1(\mathbb R^d)$, any $\xi,\zeta \in L^1(\R^d,\R^d;\mu)$ and each $\phi \in \Lip(\R^d,\R^d)$. Similarly, it holds that 
\begin{equation}
\label{eq:WassEstBis}
\bigg| \INTDom{\phi(x)}{\R^d}{(\mu-\nu)(x)} \bigg| \leq \INTDom{|\phi(x) - \phi(y)|}{\R^{2d}}{\gamma(x,y)} \leq \Lip(\phi) W_1(\mu,\nu)
\end{equation}
for each $\gamma \in \Gamma_o(\mu,\nu)$. 


\subsection{Set-valued analysis}
In this third preliminary section, we review known results on the regularity, measurability, and geometric properties of set-valued maps, following from the reference monograph \cite{Aubin1990}.


\paragraph*{Continuity of set-valued mappings.}
	
Let $(X, \dsf_X(\cdot,\cdot))$ and $(Y,\dsf_Y(\cdot,\cdot))$ be two metric spaces and consider a set–valued mapping \( \Fpazo : X \rightrightarrows Y \), whose domain and graph are defined respectively by  
\begin{equation*}
\operatorname{dom}(\Fpazo) := \Big\{ x \in X ~\,\textnormal{s.t.}~ \Fpazo(x) \neq \emptyset \Big\} \qquad \text{and} \qquad \graph(\Fpazo) := \Big\{(x,y) \in X \times Y ~\,\textnormal{s.t.}~ y\in\Fpazo(x) \Big\}.
\end{equation*}
For a nonempty set \( A \subseteq X \), we write
\begin{equation*}
\Fpazo(A) := \bigcup_{x\in A} \Fpazo(x).
\end{equation*}
Below, we recall a few fundamental definitions pertaining to the regularity of set-valued maps. Therein, and in what follows, we denote by $\B_X(x,r)$ the closed metric ball of radius $r > 0$ centred at $x \in X$. In the absence of potential confusions, we may simply write $\B(x,r)$, and given a set $A\subseteq X$ we simply let $\mathbb B_X(A,r):=\bigcup_{x\in A}\mathbb B_X(x,r)$.

\begin{definition}[Basic continuity notions for set-valued maps]
\label{def:Continuity}
Let  $\Fpazo : X \rightrightarrows Y$ and fix $\bar x \in \dom(\Fpazo)$.  
\begin{itemize}
\item[$(i)$] We say that $\Fpazo$ is \textit{lower semicontinuous} at $\bar{x}$ if for every $\varepsilon>0$ and any $\bar y\in\Fpazo(\bar x)$, there exists $\delta>0$ such that 
\begin{equation*}
\Fpazo(x) \cap \mathbb B_Y(\bar y,\varepsilon)\neq\emptyset
\end{equation*}
for all $x\in\mathbb B_X(\bar x,\delta)$. 
\item[$(ii)$] We say that $\Fpazo$ is \textit{upper semicontinuous} at $\bar x$ if for every $\varepsilon>0$, there exists $\delta>0$ such that 
\begin{equation*}
\Fpazo(x)\subseteq \mathbb B_Y(\Fpazo(\bar x),\varepsilon\big) 
\end{equation*}
for all $x\in \mathbb B_X(\bar x,\delta)$.
\item[$(iii)$] We say that $\Fpazo$ is \textit{continuous} at $\bar x$ if it is both lower and upper semicontinuous at that point.
\end{itemize}
\end{definition}

Next, we recall the concept of Lipschitz continuity for set-valued mappings.

\begin{definition}[Lipschitz continuity for set-valued maps]
We say that $\Fpazo : X \rightrightarrows Y$ is \textit{Lipschitz continuous} with constant $L > 0$ provided that 
\begin{equation*}
\Fpazo(x_1) \subseteq \mathbb B_Y\Big(\Fpazo(x_2), L \, \dsf_X(x_1, x_2)\Big) 
\end{equation*}
for all $x_1, x_2 \in X$. 
\end{definition}


\paragraph*{Set-valued mapping defined over intervals.}
Let $I \subseteq \R$ be a nonempty closed interval and consider two complete separable metric spaces $(X,\dsf_X(\cdot,\cdot))$, $(Y,\dsf_Y(\cdot,\cdot))$. In the following definition, we condense some terminology and basic results on measurable set-valued mappings, for which we refer to \cite[Chapter 8]{Aubin1990}.  

\begin{definition}[Measurability of set-valued maps and measurable selections]
\label{def:Measurability}
Let $\Fpazo: I \rightrightarrows Y$ be a set-valued mapping.
\begin{itemize}
\item[$(i)$] We say that $\Fpazo$ is \textit{$\Lcal^1$-measurable} if for every open set $\Opazo \subseteq Y$, the preimage
\begin{equation*}
\Fpazo^{-1}(\Opazo) := \Big\{t \in I : \Fpazo(t) \cap \Opazo \neq \emptyset \Big\}
\end{equation*}
is an $\Lcal^1$-measurable set.
\item[$(ii)$] We say that $f: I \to Y$ is a \textit{measurable selection} of $\Fpazo : I \rightrightarrows Y$ provided it is $\Lcal^1$-measurable and satisfies $f(t) \in \Fpazo(t)$ for almost every $t \in I$.
\end{itemize}
Moreover, we say that $\Gpazo : I \times X \rightrightarrows Y$ is \textit{Carathéodory} whenever $t \in I \rightrightarrows \Gpazo(t,x)$ is $\Lcal^1$-measurable for every $x\in X$, and $x \in X \rightrightarrows \Gpazo(t,x)$ is continuous for $\mathcal L^1$-almost every $t\in I$. 
\end{definition}
It is a known fact that if the metric space $(X,\dsf_X(\cdot,\cdot))$ is complete separable and $\Gpazo : I \times X \rightrightarrows Y$ is Carathéodory with closed images, then for any $\mathcal L^1$-measurable $x:I\to X$, the set-valued mapping $t \in I \rightrightarrows \Gpazo(t,x(t))$ is $\mathcal L^1$-measurable.

Recall that the Hausdorff distance between two nonempty compact sets $A,B\subseteq Y$ is given by
\begin{align*}
\Delta_H(A,B) := \max\left\{ \sup_{y\in A} \dsf_Y(y,B) \, , \, \sup_{y\in B} \dsf_Y(y,A) \right\}.
\end{align*}
In what follows, we recall the classical transposition of local absolute continuity to set-valued mappings, which play an essential role in the characterisation of viable sets, see e.g. \cite{ViabPp,Frankowska1995}. In order to describe it, we consider the one-sided (extended) Hausdorff distance given by 
\begin{equation}
\label{eq:HausdorffSemiDist}
\Delta_{y_0,r}(A,B):=\displaystyle\sup \bigg\{ \dsf_Y(y \,;B) ~\,\textnormal{s.t.}~ y\in A\cap \mathbb B_Y(y_0,r)\bigg\} 
\end{equation}
for all $y_0\in Y$, each $r>0$ and any pair of nonempty closed sets $A,B\subseteq Y$ with $A\cap\mathbb B_Y(y_0,r)\neq\emptyset$. In the case in which $A\cap\mathbb B_Y(y_0,r) = \emptyset$, we simply set $\Delta_{y_0,r}(A,B)=0$. 

\begin{definition}[Absolute continuity of set-valued maps]
Let $\Fpazo: I\rightrightarrows Y$ be a set-valued mapping with nonempty images. 
\begin{itemize}
\item[$(i)$] We say that $\Fpazo$ is \textit{left-absolutely continuous} if for any $y_0\in Y$ and $r>0$ there exists a nonnegative function $m_{y_0,r}(\cdot)\in L^1(I, \mathbb R_+)$ such that 
\begin{equation*}
\Delta_{y_0,r}\big(\Fpazo(s),\Fpazo(t)\big)\le \int_{s}^t m_{y_0,r}(\theta)\,\mathrm{d}\theta
\end{equation*}
for all $s,t\in I$ satisfying $s\le t$.
\item[$(ii)$] We say that $\Fpazo$ is \textit{right absolutely continuous} if for any $y_0\in Y$ and $r>0$ there exists a nonnegative function $m_{y_0,r}(\cdot)\in L^1(I, \mathbb R_+)$ such that
\begin{equation*}
\Delta_{y_0,r}\big(\Fpazo(t),\Fpazo(s)\big)\le \int_{s}^t m_{y_0,r}(\theta)\mathrm{d}\theta 
\end{equation*}
for all $s,t\in I$ satisfying $s\le t$.
\item[$(iii)$]  We say that $\Fpazo$ is \textit{absolutely continuous} if it is  both left and right absolutely continuous.
\end{itemize}
In the case in which $\Fpazo$ has compact images, we will also say that it is  \textit{Hausdorff absolutely continuous} if  there exists  $m(\cdot)\in L^1(I, \mathbb R_+)$ such that 
\begin{align*}
\Delta_H(\Fpazo(t),\Fpazo(s))\le \int_{s}^t m(\theta)\mathrm{d}\theta 
\end{align*}
for all $s,t\in I$ satisfying $s\le t$.
\end{definition} 	

These notions of absolute continuity yield useful regularity properties for the distance between set-valued mappings. We refer to \cite[Proposition 2.8]{ViabPp} for the proof of the following result. 

\begin{proposition}[Absolute continuity of the metric distance]
\label{Propabs}
Let \( \Kpazo : I \rightarrow Y \) be a set-valued mapping with nonempty compact images that is Hausdorff absolutely continuous, and let \( \Qpazo : I \rightarrow Y \) be an absolutely continuous set-valued mapping with nonempty closed images. Then, the function $g:I\to[0,+\infty)$ given by
\begin{equation*}
g(t):=\inf\Big\lbrace \dsf_Y(y_1,y_2) ~\, \textnormal{s.t.}~  y_1\in \Kpazo(t) ~~\text{and}~~ y_2 \in\Qpazo(t) \Big\rbrace 
\end{equation*}
for all $t \in I$ is absolutely continuous. 
\end{proposition}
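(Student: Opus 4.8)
The plan is to show that $g$ is absolutely continuous by producing, for every $s \le t$ in $I$, a bound of the form $|g(t) - g(s)| \le \int_s^t \omega(\theta)\,\mathrm{d}\theta$ for a suitable $\omega \in L^1(I,\R_+)$ that may be chosen locally (i.e. on each subinterval where $g$ stays bounded), which is enough to conclude absolute continuity. The first step is a localisation argument: since $\Kpazo$ has compact images and is Hausdorff absolutely continuous, it is in particular continuous, so $\Kpazo(I')$ is a bounded (in fact relatively compact) subset of $Y$ for every compact $I' \subseteq I$; hence the infimum defining $g$ on $I'$ is only affected by points $y_2 \in \Qpazo(t)$ lying in some fixed ball $\B_Y(y_0, r)$, where $y_0$ can be taken as any point of $\Kpazo$ and $r$ is a uniform radius controlling the diameter of $\Kpazo(I')$ plus a fixed constant bounding $g$. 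This is precisely the situation in which the one-sided Hausdorff distance $\Delta_{y_0,r}$ from \eqref{eq:HausdorffSemiDist} is the relevant quantity, and the absolute continuity hypotheses on $\Kpazo$ and $\Qpazo$ provide integrable majorants $m(\cdot)$ and $m_{y_0,r}(\cdot)$ respectively.

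The core estimate is a triangle-inequality comparison of $g(t)$ and $g(s)$. Fix $s \le t$. Given $\varepsilon > 0$, pick near-minimisers $y_1 \in \Kpazo(s)$, $y_2 \in \Qpazo(s)$ with $\dsf_Y(y_1,y_2) \le g(s) + \varepsilon$. Using Hausdorff absolute continuity of $\Kpazo$, choose $y_1' \in \Kpazo(t)$ with $\dsf_Y(y_1,y_1') \le \int_s^t m(\theta)\,\mathrm{d}\theta$; using right-absolute continuity of $\Qpazo$ at the pair $(y_0,r)$ (noting $y_2 \in \Qpazo(s) \cap \B_Y(y_0,r)$ for the localised radius), choose $y_2' \in \Qpazo(t)$ with $\dsf_Y(y_2,y_2') \le \int_s^t m_{y_0,r}(\theta)\,\mathrm{d}\theta$. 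Then
\begin{equation*}
g(t) \le \dsf_Y(y_1',y_2') \le \dsf_Y(y_1,y_2) + \dsf_Y(y_1,y_1') + \dsf_Y(y_2,y_2') \le g(s) + \varepsilon + \int_s^t \big(m(\theta) + m_{y_0,r}(\theta)\big)\,\mathrm{d}\theta,
\end{equation*}
and letting $\varepsilon \to 0^+$ gives $g(t) - g(s) \le \int_s^t (m + m_{y_0,r})\,\mathrm{d}\theta$. The reverse inequality $g(s) - g(t) \le \int_s^t (m + m_{y_0,r})\,\mathrm{d}\theta$ is obtained symmetrically, starting from near-minimisers at time $t$, transporting $y_1 \in \Kpazo(t)$ back to $\Kpazo(s)$ via Hausdorff absolute continuity (which is two-sided since it controls $\Delta_H$), and transporting $y_2 \in \Qpazo(t)$ back to $\Qpazo(s)$ via left-absolute continuity of $\Qpazo$. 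Combining the two bounds yields $|g(t) - g(s)| \le \int_s^t \omega(\theta)\,\mathrm{d}\theta$ with $\omega := m + m_{y_0,r} \in L^1(I',\R_+)$, which is the defining property of (local, hence global on $I$ by a standard exhaustion) absolute continuity.

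The main obstacle I anticipate is not the triangle-inequality bookkeeping but the localisation step: one must verify carefully that the fixed radius $r$ can indeed be chosen uniformly on each compact subinterval, so that a single pair $(y_0,r)$ governs the one-sided Hausdorff distances throughout. This requires that $g$ itself be locally bounded — which follows because $\Kpazo(I')$ is relatively compact and $\Qpazo$ has nonempty closed images, so $g(t) \le \dsf_Y(y_0, \Qpazo(t))$ stays bounded on $I'$ provided $\Qpazo$ does not "escape to infinity", a fact guaranteed by its absolute continuity (which implies continuity of $t \mapsto \dsf_Y(y_0,\Qpazo(t))$, hence local boundedness). A secondary subtlety is ensuring the near-minimisers one transports actually lie in $\B_Y(y_0,r)$, so that the hypothesis on $\Qpazo$ applies with the chosen $r$; this is arranged by taking $r$ strictly larger than $\sup_{t \in I'} \dsf_Y(y_0,\Qpazo(t)) + \mathrm{diam}\,\Kpazo(I') + 1$ and discarding near-minimisers within $\varepsilon$ of the infimum, which automatically satisfy the ball constraint.
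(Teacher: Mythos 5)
Your argument is correct and is essentially the standard proof of this statement (the paper itself only cites \cite[Proposition 2.8]{ViabPp}, whose argument is the same transport-of-near-minimisers scheme): transport an $\varepsilon$-minimising pair from time $s$ to time $t$ (and back) using the Hausdorff bound for $\Kpazo$ and the localised one-sided bound for $\Qpazo$, after fixing a single pair $(y_0,r)$ large enough — via the uniform bound on $\Kpazo(I)$ and the finiteness of $\sup_t \dsf_Y(y_0,\Qpazo(t))$, both of which follow from the absolute continuity hypotheses as you indicate — so that all near-minimisers in $\Qpazo$ lie in $\B_Y(y_0,r)$. One small slip: with the paper's conventions, moving a point of $\Qpazo(s)$ forward into $\Qpazo(t)$ uses \emph{left} absolute continuity ($\Delta_{y_0,r}(\Qpazo(s),\Qpazo(t))$) and moving a point of $\Qpazo(t)$ back into $\Qpazo(s)$ uses \emph{right} absolute continuity, i.e.\ the opposite of your labelling; this is harmless here since $\Qpazo$ is assumed absolutely continuous in both senses.
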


\paragraph*{Aumann integration for $C^0(\R^d,\R^d)$-valued multifunctions.}
Let $I\subseteq \R_+$ be a closed interval and consider  an $\Lcal^1$-measurable set-valued mapping $V: I\rightrightarrows C^0(\mathbb R^d,\mathbb R^d)$. We say that a measurable map $v:I\to C^0(\mathbb R^d,\mathbb R^d)$ is an \textit{integrable selection} of $V : I\rightrightarrows C^0\big(\mathbb R^d,\mathbb R^d\big)$ if it is integrable in the sense of Definition \ref{def:IntegralC0} above, and such that $v(t)\in V(t)$ for $\Lcal^1$-almost every $t\in I$.

\begin{definition}[Aumann integral of $C^0(\R^d,\R^d)$-valued multifunctions]
We define the \textit{Aumann integral} of the set-valued mapping $V : I \rightrightarrows C^0(\R^d,\R^d)$ as
\begin{equation*}
\int_I V(t) \,\mathrm{d}t := \bigg\{\int_I v(t)\,\mathrm{d}t ~\,\textnormal{s.t.}~ \text{$v:I\to C^0\big(\mathbb R^d,\mathbb R^d\big)$ is an integrable selection of $V(\cdot)$}\bigg\}.
\end{equation*}
\end{definition}	
Given $\tau\in I$, we define the set
\begin{equation*}
\begin{aligned}
\Liminf{h\to 0^+} \,\frac{1}{h} \INTSeg{\hspace{-0.05cm}V(t)}{t}{\tau}{\tau+h} :=
\Bigg\{ & w\in C^0(\R^d,\R^d) ~\, \textnormal{s.t.}~ \text{for each $h>0$ there is an integrable selection} \\
&~ \text{$t \in [\tau,\tau+h] \mapsto v_h(t)  \in V(t)$ \,s.t.}\,~ \dsf_{cc} \Big( \tfrac{1}{h} \mathsmaller{\INTSeg{v_h(s)}{s}{\tau}{\tau+h}},w \Big) \underset{h \to 0}{\longrightarrow} 0
\Bigg\}. 
\end{aligned}
\end{equation*}
We shall encounter these generalised Lebesgue points again in Section \ref{section:USC}, and point the reader, e.g., to \cite[Section 3]{Frankowska1995} for ampler details on set-valued lower limits. 


\subsection{Continuity equations and inclusions in Wasserstein spaces}

In this fourth preliminary section, we recall the definition of solutions to the following class of set-valued Cauchy problems
\begin{equation}\label{cie}
\left\{
\begin{aligned}
& \partial_t \mu(t) \in - \Div_x \Big( V(t,\mu(t)) \, \mu(t) \Big), \\
& \mu(\tau) = \mu_{\tau}, 
\end{aligned}
\right.
\end{equation}
starting from some $(\tau,\mu_{\tau}) \in I \times \Pcal_1(\R^d)$. Additionally, we discuss a few basic topological properties of their reachable and solution sets following \cite{ContIncPp,ViabPp}. 


\paragraph*{Curves in Wasserstein spaces and Carathéodory fields.}

We denote by $C^0(I,\mathcal P_1(\mathbb R^d))$ the space of all continuous curves $\mu:I \to \mathcal P_1(\mathbb R^d)$. This is a complete metric space endowed with the usual supremum metric, which induces the topology of uniform convergence. We further say that $\mu(\cdot) \in C^0(I,\mathcal P_1(\mathbb R^d))$ is \textit{absolutely continuous} if there exists $M(\cdot) \in L^1(I,\mathbb R_+)$ such that
\begin{equation*}
W_1(\mu(s),\mu(t)) \leq \int_{s}^t M(\theta)\,\mathrm{d}\theta, 
\end{equation*}
for all times $s,t \in I$ such that $s \leq t$, and denote by $\AC(I,\mathcal P_1(\mathbb R^d))$ the set of absolutely continuous curves from $I$ into $\mathcal P_1(\mathbb R^d)$. In keeping with Definition \ref{def:Measurability} above, we say that $v : I \times\mathbb R^d \to \mathbb R^d$ is a Carathéodory vector field if $t \in I \mapsto v(t,x)$ is $\Lcal^1$-measurable for all $x \in \R^d$ and $x \in \R^d \mapsto v(t,x) \in \R^d$ is continuous for $\Lcal^1$-almost every $t \in I$. We define its natural functional lift as
\begin{equation*}
t \in I \mapsto v(t) \in C^0(\R^d,\R^d). 
\end{equation*}
It is known (see e.g. \cite[Page 511]{Papageorgiou1986}) that a vector field $v:I \times \mathbb R^d\to \mathbb R^d$ is Carathéodory if and only if its functional lift  $v:[\tau,T]\to C^0(\mathbb R^d,\mathbb R^d)$ is $\Lcal^1$-measurable with respect to the compact-open topology, and we will henceforth refer to both objects interchangeably.  To quantify the discrepancy between elements of \( C^0(\mathbb{R}^d, \mathbb{R}^d) \), we shall sometimes use the following (extended) supremum distance
\[
\dsf_{\operatorname{sup}}(v, w) := \sup_{x \in \mathbb{R}^d} |v(x) - w(x)| \in \mathbb{R}_+ \cup \{+\infty\},
\]
which captures global mistmatchs between continuous vector fields.


\paragraph*{Continuity inclusions.}
We recall that a curve $\mu \in C^0(I,\Pcal(\R^d))$ solves the \textit{continuity equation}  
\begin{equation}\label{cee}
\partial_t \mu(t) + \Div_x(v(t)\mu(t))=0, \\
\end{equation}
provided that 
\begin{equation*}
\int_{I} \int_{\mathbb{R}^d}\Big(\partial_t\varphi(t,x) + \langle \nabla\varphi(t,x) , v(t,x) \rangle\Big)\mathrm{d}\mu(t)(x)\,\mathrm{d}t = 0
\end{equation*} 
for each test function $\varphi\in C_c^\infty\big(I\times\mathbb R^d\big)$. {As a consequence, e.g., of \cite[Theorem 8.3.1]{AGS} (see also the method proposed in \cite[Section 5]{Fornasier2019}), any solution of \eqref{cee} complies with the following estimates 
\begin{equation}
\label{eq:BasicEstimates}
\left\{
\begin{aligned}
& W_1(\mu(s),\mu(t)) \leq \INTSeg{\NormL{v(\theta)}{1}{\R^d,\R^d;\,\mu(\theta)}}{\theta}{s}{t}, \\
& \Mpazo_1(\mu(t)) \leq \Mpazo_1(\mu(s)) + \INTSeg{\NormL{v(\theta)}{1}{\R^d,\R^d;\,\mu(\theta)}}{\theta}{s}{t}, 
\end{aligned}
\right.
\end{equation}
for all  $s,t \in I$ such that $s \leq t$}. It is well known that continuity equations are essentially ordinary differential equations over Wasserstein spaces -- which can be loosely seen as infinite dimensional manifolds --, with the role of tangent velocity being played by $t \in I \mapsto v(t) \in C^0(\R^d,\R^d)$. Building upon this insight, the following notion of solution to a differential inclusion in measure spaces was originally proposed in \cite{ContInc,ContIncPp}.

\begin{definition}[Solutions of continuity inclusions]
\label{def:ContInc}
We say that a curve $\mu : I \to \Pcal(\mathbb R^d)$ is a solution of the \textit{continuity inclusion}
\begin{equation*}
\partial_t \mu(t) \in - \Div_x \Big( V(t,\mu(t)) \, \mu(t) \Big)
\end{equation*}
if there exists an $\mathcal L^1$-measurable  selection $t \in I \mapsto v(t) \in V(t,\mu(t))$ for which $\mu(\cdot)$ solves \eqref{cee}.
\end{definition}


\paragraph*{Properties of the reachable sets.} In what follows, we let $T > 0$ and consider a correspondence $V:[0,T]\times \mathcal P_1(\mathbb R^d)\rightrightarrows C^0(\mathbb R^d,\mathbb R^d)$ with nonempty, closed and convex images. The latter is so that $t \in [0,T] \rightrightarrows V(t,\mu)$ is $\Lcal^1$-measurable with respect to the compact-open topology for all $\mu \in \Pcal_1(\R^d)$, and satisfies all or parts of the following assumptions.
\begin{enumerate}
\item[$(i)$] There exists a function $M(\cdot)\in L^1([0,T],\mathbb R_+)$ such that
\begin{equation}
\label{eq:Itemi}
|v(x)| \le M(t) \Big( 1 + |x| + \Mpazo_1(\mu) \Big) \qquad \text{and} \qquad \Lip(v(t)) \leq M(t)
\end{equation}
for $\Lcal^1$-almost every $t \in [0,T]$, any $(\mu,v) \in \Graph(V(t))$ and all $x \in \R^d$.
\item[$(ii)$] There exists a function $L(\cdot)\in L^1([0,T],\mathbb R_+)$ such that for $\Lcal^1$-almost every $t\in[0,T]$, any $\mu,\nu \in \Pcal_1(\R^d)$, and each $v \in V(t,\mu)$, there exists $w \in V(t,\nu)$ with
\[
\dsf_{\operatorname{sup}}(v, w) \leq L(t)\, W_1(\mu, \nu).
\]

\end{enumerate}
Let us now introduce some basic terminology concerning the solutions of the set-valued Cauchy problem \eqref{cie}. Given a pair $(\tau,\mu_{\tau}) \in [0,T] \times \Pcal_1(\R^d)$, define its \textit{(forward) solution set} as
\begin{equation*}
\Spazo_{[\tau, T]}(\mu_\tau) := \Big\{ \mu(\cdot) \in C^0([\tau, T], \mathcal{P}_1(\mathbb{R}^d)) ~\, \textnormal{s.t.}~ \text{$\mu(\cdot)$ is a solution of \eqref{cie}} \Big\},
\end{equation*}
and consider the corresponding \textit{reachable set} at any given time $t\in[\tau,T]$, given by 
\begin{equation*}
\Rpazo_{(\tau, t)}(\mu_\tau) := \Big\{ \mu(t) ~\,\textnormal{s.t.}~ \mu \in \Spazo_{[\tau, t]}(\mu_{\tau}) \Big\}.
\end{equation*}
It can be shown that the latter satisfy the semigroup property
\begin{equation*}
\Rpazo_{(\tau, t)}(\mu_\tau) = \Rpazo_{(s, t)} \circ \Rpazo_{(\tau, s)}(\mu_\tau) 
\end{equation*}
for all times $\tau\le s\le t\le T$. Below, we recall some basic estimates along with some useful properties satisfied by these sets, which were proven in \cite{ContIncPp,ViabPp} under similar regularity assumptions. 

\begin{lemma}[Basic moment and regularity estimates]
\label{lem:Estimates}
Suppose that item $(i)$ above holds. Then
\begin{equation*}
\Mpazo_1(\mu(t)) \leq c_T \qquad \text{and} \qquad  W_1(\mu(s),\mu(t)) \leq (1+2c_T)\INTSeg{M(\theta)}{\theta}{s}{t}
\end{equation*}
for all times $\tau \leq s \leq t \leq T$ and every $\mu(\cdot) \in \Spazo_{[\tau,T]}(\mu_{\tau})$ with $(\tau,\mu_{\tau}) \in [0,T] \times \Pcal_1(\R^d)$, where $c_T > 0$ only depends on the magnitudes of $\Mpazo_1(\mu_\tau)$ and $\Norm{M(\cdot)}_{L^1([0,T],\R_+)}$
\end{lemma}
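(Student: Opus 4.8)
The plan is to prove the two stated estimates by exploiting the defining property of solutions of \eqref{cie} together with the basic estimates \eqref{eq:BasicEstimates} for the continuity equation and a Gr\"onwall argument.

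\medskip

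\emph{Step 1: the moment estimate via Gr\"onwall.} Let $\mu(\cdot) \in \Spazo_{[\tau,T]}(\mu_\tau)$, so that there is a measurable selection $t \mapsto v(t) \in V(t,\mu(t))$ for which $\mu(\cdot)$ solves \eqref{cee}. By the sublinearity bound in item $(i)$, namely $|v(t,x)| \le M(t)(1+|x|+\Mpazo_1(\mu(t)))$, I integrate over $\mu(t)$ to obtain
\[
\NormL{v(t)}{1}{\R^d,\R^d;\,\mu(t)} = \INTDom{|v(t,x)|}{\R^d}{\mu(t)(x)} \le M(t)\big(1 + 2\,\Mpazo_1(\mu(t))\big).
\]
Plugging this into the second inequality of \eqref{eq:BasicEstimates}, with $s = \tau$, gives
\[
\Mpazo_1(\mu(t)) \le \Mpazo_1(\mu_\tau) + \INTSeg{M(\theta)\big(1 + 2\,\Mpazo_1(\mu(\theta))\big)}{\theta}{\tau}{t}.
\]
Since $\theta \mapsto \Mpazo_1(\mu(\theta))$ is bounded on $[\tau,T]$ (the curve takes values in $\Pcal_1(\R^d)$ and is continuous, hence the first moment is finite at each time; more carefully, one first works on a subinterval where it is finite and bootstraps), the integral form of Gr\"onwall's lemma yields
\[
\Mpazo_1(\mu(t)) \le \Big( \Mpazo_1(\mu_\tau) + \Norm{M(\cdot)}_{L^1([0,T],\R_+)}\Big) \exp\!\Big( 2 \Norm{M(\cdot)}_{L^1([0,T],\R_+)} \Big) =: c_T,
\]
which is the first claimed bound, with $c_T$ depending only on $\Mpazo_1(\mu_\tau)$ and $\Norm{M(\cdot)}_{L^1([0,T],\R_+)}$.

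\medskip

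\emph{Step 2: the modulus-of-continuity estimate.} With the uniform bound $\Mpazo_1(\mu(\theta)) \le c_T$ in hand, I revisit the velocity estimate from Step 1 to get $\NormL{v(\theta)}{1}{\R^d,\R^d;\,\mu(\theta)} \le M(\theta)(1 + 2c_T)$ for $\Lcal^1$-a.e.\ $\theta$. Substituting this into the first inequality of \eqref{eq:BasicEstimates} gives, for all $\tau \le s \le t \le T$,
\[
W_1(\mu(s),\mu(t)) \le \INTSeg{\NormL{v(\theta)}{1}{\R^d,\R^d;\,\mu(\theta)}}{\theta}{s}{t} \le (1+2c_T)\INTSeg{M(\theta)}{\theta}{s}{t},
\]
which is exactly the second claimed bound. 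This also incidentally re-confirms that $\mu(\cdot) \in \AC([\tau,T],\Pcal_1(\R^d))$.

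\medskip

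\emph{Main obstacle.} The only genuinely delicate point is the circularity in Step 1: to run Gr\"onwall one needs $\theta \mapsto \Mpazo_1(\mu(\theta))$ to be (locally) integrable or bounded a priori, but this is what the argument is supposed to establish. I would resolve this in the standard way: either invoke that solutions of \eqref{cee} automatically satisfy the second line of \eqref{eq:BasicEstimates} and hence have locally bounded first moment on any interval where it is finite at one endpoint (here $\Mpazo_1(\mu_\tau) < \infty$ by hypothesis), propagating finiteness forward; or argue by a continuation/maximality argument on $[\tau, T]$, showing that the set of times where the Gr\"onwall bound holds is nonempty, open, and closed. Everything else is a routine combination of the cited estimate \eqref{eq:BasicEstimates}, the sublinear growth in item $(i)$, and the integral Gr\"onwall inequality.
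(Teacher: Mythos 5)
Your proof is correct and takes essentially the same route as the paper, which simply defers to \cite[Proposition 2.5]{ViabPp}: the argument there is exactly this combination of the two estimates in \eqref{eq:BasicEstimates} with the sublinear growth bound and Gr\"onwall's lemma, with the same constant $c_T$. The circularity you flag in Step 1 is harmless: by Definition \ref{def:ContInc} solutions are continuous curves in $\Pcal_1(\R^d)$, so $t \mapsto \Mpazo_1(\mu(t)) = W_1(\mu(t),\delta_0)$ is continuous, hence bounded on $[\tau,T]$, and the integral Gr\"onwall inequality applies directly without any continuation argument.
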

\begin{proof}
See \cite[Proposition 2.5]{ViabPp}.
\end{proof}

\begin{proposition}[Topological properties of the solution and reachable sets]
\label{reachabscon}
Under items $(i)$ and $(ii)$ above, the following statements holds.
\begin{itemize}
\item[$(a)$] The solution set \(\Spazo_{[\tau,T]}(\mu_{\tau}) \subseteq C^0([\tau, T], \mathcal{P}_1(\mathbb{R}^d))\) is nonempty and compact for the topology of uniform convergence.
\item[$(b)$] The mapping $t \in [\tau,T] \rightrightarrows \Rpazo_{(\tau,t)}(\mu_{\tau}) \subseteq \Pcal_1(\R^d)$ has compact images and is Hausdorff absolutely continuous.
\end{itemize}

\end{proposition}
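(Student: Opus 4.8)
The plan is to establish both items through a compactness-plus-closedness argument for the solution set, followed by a deduction of the reachable-set properties from it. First I would prove nonemptiness in $(a)$: starting from $\mu_\tau$, pick any measurable selection of $t\rightrightarrows V(t,\mu_\tau)$ (which exists by the measurable selection theorem, since $V(t,\cdot)$ is $\Lcal^1$-measurable with the compact-open topology and has closed images), solve the associated continuity equation via the superposition principle or a standard Euler/fixed-point scheme on $\AC([\tau,T],\Pcal_1(\R^d))$, and iterate. The sublinearity and Lipschitz bounds in item $(i)$, together with the moment bound $\Mpazo_1(\mu(t))\le c_T$ from Lemma \ref{lem:Estimates} and a Gr\"onwall argument, ensure that a Cauchy-Lipschitz-type iteration using item $(ii)$ converges in $C^0([\tau,T],\Pcal_1(\R^d))$ to a genuine solution; alternatively one invokes the existence results of \cite{ContIncPp} directly.

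For compactness in $(a)$, I would take a sequence $\{\mu_n\}\subseteq\Spazo_{[\tau,T]}(\mu_\tau)$ with generating velocity selections $t\mapsto v_n(t)\in V(t,\mu_n(t))$. By Lemma \ref{lem:Estimates} the curves $\mu_n$ are equi-absolutely-continuous with values in the fixed compact set $\{\Mpazo_1\le c_T\}\subseteq\Pcal_1(\R^d)$ (relatively compact by \eqref{eq:RelativeCompactness}), so by an Ascoli-Arzel\`a argument in $C^0([\tau,T],\Pcal_1(\R^d))$ a subsequence converges uniformly to some $\mu$. The velocities $v_n$ satisfy $|v_n(t,x)|\le M(t)(1+|x|+c_T)$ and $\Lip(v_n(t))\le M(t)$, hence the hypotheses of Lemma \ref{lem:comvf} hold; extracting a further subsequence gives a Bochner-integrable limit $v$ with the stated weak convergence against scalarly-$^*$ measurable duals. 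I must then check that $v(t)\in V(t,\mu(t))$ for a.e.\ $t$ — this is where the convexity and closedness of the images, together with item $(ii)$ (upper semicontinuity of $V(t,\cdot)$ in the measure variable), and a Mazur-type argument converting weak to strong convergence of convex combinations, come into play — and that the limiting pair $(\mu,v)$ still solves the continuity equation \eqref{cee}, by passing to the limit in the weak formulation, using $\mu_n\to\mu$ uniformly, the weak $L^1$-convergence of $v_n$ tested against $\nabla\varphi(\cdot,\cdot)\,\mu_n(\cdot)$, and the continuity of the pairing. Closedness of $\Spazo_{[\tau,T]}(\mu_\tau)$ is exactly this limit passage, and combined with relative compactness it yields compactness.

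For $(b)$: compactness of $\Rpazo_{(\tau,t)}(\mu_\tau)$ is immediate since it is the image of the compact set $\Spazo_{[\tau,t]}(\mu_\tau)$ under the continuous evaluation map $\mu\mapsto\mu(t)$; that it lies in $\Pcal_1(\R^d)$ and is nonempty follows from $(a)$ and Lemma \ref{lem:Estimates}. For Hausdorff absolute continuity, fix $\tau\le s\le t\le T$ and a measure $\nu\in\Rpazo_{(\tau,t)}(\mu_\tau)$, realised as $\mu(t)$ for some solution $\mu$; then $\mu(s)\in\Rpazo_{(\tau,s)}(\mu_\tau)$ and, by the distance estimate in Lemma \ref{lem:Estimates}, $W_1(\mu(s),\mu(t))\le(1+2c_T)\int_s^t M(\theta)\,\dn\theta$, giving one side of the Hausdorff bound; the other side is obtained symmetrically using the semigroup property $\Rpazo_{(\tau,t)}=\Rpazo_{(s,t)}\circ\Rpazo_{(\tau,s)}$ and running solutions forward from time $s$. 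Hence $\Delta_H(\Rpazo_{(\tau,s)}(\mu_\tau),\Rpazo_{(\tau,t)}(\mu_\tau))\le(1+2c_T)\int_s^t M(\theta)\,\dn\theta$ with $M(\cdot)\in L^1$, which is the claimed absolute continuity. The main obstacle is the verification that the weak limit velocity $v$ is a selection of $V(t,\mu(t))$ and that no mass or regularity is lost in the limit passage of the continuity equation — that is, the closedness of $\Spazo_{[\tau,T]}(\mu_\tau)$ under the combined uniform/weak convergences — everything else being routine given Lemmas \ref{lem:Estimates}, \ref{lem:comvf} and the semigroup property.
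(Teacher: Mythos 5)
The paper itself does not prove this proposition; it simply cites \cite{ContIncPp} for item $(a)$ and \cite{ViabPp} for item $(b)$, and your outline is essentially a reconstruction of the arguments in those references: existence plus an Ascoli--Arzel\`a/weak-$L^1$ compactness argument with a convexity-based closure step for $(a)$, and the evaluation map together with the estimates of Lemma \ref{lem:Estimates} and the semigroup property for $(b)$. Item $(b)$ as you present it is correct, and the limit passage in the weak formulation (testing against $\nabla\varphi\,\mu_n$, using uniform convergence of $\mu_n$ and the Lipschitz bound on $v_n$) is exactly the mechanism used in Section \ref{ssusc} of the paper.

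There is, however, one genuine flaw in your compactness step for $(a)$: you claim that the curves take values in ``the fixed compact set $\{\Mpazo_1\le c_T\}$ (relatively compact by \eqref{eq:RelativeCompactness})''. This is false: a uniform bound on the first moment does \emph{not} imply relative compactness in $(\Pcal_1(\R^d),W_1)$, and \eqref{eq:RelativeCompactness} is precisely the stronger uniform-integrability condition, not a moment bound. For instance, $\mu_n=(1-\tfrac1n)\delta_0+\tfrac1n\delta_{n e_1}$ has $\Mpazo_1(\mu_n)=1$ but admits no $W_1$-convergent subsequence, since its only possible limit is the narrow limit $\delta_0$ while $W_1(\mu_n,\delta_0)=1$. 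So Ascoli--Arzel\`a cannot be invoked with $\{\Mpazo_1\le c_T\}$ as the common compact value set. The gap is fixable, but it requires an extra argument: since every solution is (via its velocity selection and hypothesis $(i)$) the pushforward of the fixed initial datum $\mu_\tau$ under flows with uniform sublinear growth and uniformly $L^1$ Lipschitz constants, one obtains a bound of the form
\begin{equation*}
\sup_{n\in\N}\,\sup_{t\in[\tau,T]}\INTDom{|x|}{\{|x|\ge k\}}{\mu_n(t)(x)} \;\leq\; C \INTDom{(1+|x|)}{\{|x|\ge (k-C)/C\}}{\mu_\tau(x)} ~\underset{k\to+\infty}{\longrightarrow}~ 0,
\end{equation*}
which is the uniform integrability needed to place $\{\mu_n(t)\}_{n}$ in a single compact subset of $\Pcal_1(\R^d)$, as required by \eqref{eq:RelativeCompactness}. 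With this correction (and with the Castaing--Valadier closure principle of Proposition \ref{prop:CastaingValadier}, or your Mazur-type argument, for the selection property of the limit velocity), your scheme goes through and coincides with the proofs in the cited references.
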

\begin{proof}
See \cite[Theorem 4.2 and Theorem 4.5]{ContIncPp} for the first statement and 
\cite[Proposition 2.20, Proposition 2.21 and Lemma 4.5]{ViabPp} for the second one.
\end{proof}

\begin{theorem}[Infinitesimal behaviour of the reachable set]
\label{thm:Infinitesimal}
Suppose that items $(i)$ and $(ii)$ above hold. Then, there exists a subset \( E \subseteq (0, T) \) of full Lebesgue measure such that the following holds.
\begin{itemize}
\item[$(a)$] For every \( \tau \in E \), all \( \mu_{\tau} \in \mathcal{P}_1(\mathbb{R}^d) \), each \( \xi_{\tau}  \in V(\tau, \mu_{\tau}) \) and any \( \varepsilon > 0 \), there exist some \( h_{\varepsilon} > 0 \) along with a curve \( \mu_{\varepsilon}(\cdot) \in \Spazo_{[\tau,T]}(\mu_{\tau}) \) such that
\begin{equation*}
W_1 \Big( \mu_{\varepsilon}(\tau + h), (\Id + h v_{\tau})_\sharp \mu_{\tau} \Big) \leq \varepsilon h\quad 
\end{equation*}
for all $h \in [0, h_{\varepsilon}]$.
\item[$(b)$] For every \( \tau \in E \), all \( \mu_{\tau} \in \mathcal{P}_1(\mathbb{R}^d) \), each curve  $\mu(\cdot) \in \Spazo_{[\tau,T]}(\mu_{\tau})$, any \( \varepsilon > 0 \) and every sequence \( h_i \to 0 \), there exists an element \( v_{\tau}^{\epsilon} \in V(\tau, \mu_{\tau}) \) such that
\begin{equation*}
W_1\Big( \mu(\tau + h_{i_k}), (\Id + h_{i_k} v_{\tau}^{\epsilon})_\sharp \mu_{\tau} \Big) \leq \varepsilon |h_{i_k}|
\end{equation*}
along a subsequence \( h_{i_k} \to 0 \).
\end{itemize}
\end{theorem}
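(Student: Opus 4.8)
\textbf{Proof plan for Theorem \ref{thm:Infinitesimal}.}

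The plan is to establish both statements by exploiting the link between continuity inclusions and classical Carathéodory ODEs in $\R^d$ via the flow of a well-chosen velocity selection, together with the moment and regularity bounds from Lemma \ref{lem:Estimates}. The set $E$ will be the set of Lebesgue points (with respect to the $L^1$-in-time structure) of the various time-dependent data: the integrand $M(\cdot)$, the modulus $L(\cdot)$, and — crucially — of a countable family of scalar maps $t \mapsto \langle \Bnu, V(t,\mu)\rangle$-type support functions that encode the measurability and the infinitesimal behaviour of $V(\cdot,\cdot)$ at a dense set of $(\mu,v)$ pairs; a diagonal/separability argument over such a countable family will then upgrade a.e.-in-$\mu$-and-$v$ Lebesgue-point behaviour to \emph{all} $\mu_\tau$ and \emph{all} $\xi_\tau$ by continuity (using item $(ii)$). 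This is the standard route for such ``Lebesgue point of a set-valued map'' statements, following \cite[Section 3]{Frankowska1995}.

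For part $(a)$: fix $\tau \in E$, $\mu_\tau$, $\xi_\tau = v_\tau \in V(\tau,\mu_\tau)$, and $\varepsilon > 0$. First I would use the Carathéodory-selection theory together with item $(ii)$ to produce, near $\tau$, a measurable selection $t \mapsto w(t) \in V(t,\mu(t))$ along the solution $\mu(\cdot)$ it generates, such that $w(t)$ stays $\dsf_{\mathrm{sup}}$-close to $v_\tau$ for $t$ in a short interval $[\tau,\tau+h_\varepsilon]$: this uses that $\tau$ is a Lebesgue point so that $\tfrac1h\int_\tau^{\tau+h} v(s)\,\mathrm{d}s \to v_\tau$ in the relevant topology for a suitable selection, plus the $L(t)W_1(\mu(s),\mu(t))$ Lipschitz estimate and the absolute-continuity bound $W_1(\mu(\tau),\mu(t)) \le (1+2c_T)\int_\tau^t M$ from Lemma \ref{lem:Estimates} to control the drift of the base point. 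Then the flow map $X(t,\cdot)$ of $w$ satisfies $\mu(t) = X(t,\cdot)_\sharp\mu_\tau$, and a Grönwall estimate comparing $X(\tau+h,x)$ with the Euler step $x + h v_\tau(x)$ gives
\begin{equation*}
\sup_{x \in \supp(\mu_\tau)} \big| X(\tau+h,x) - (x + h v_\tau(x)) \big| \le h \cdot o(1) + \int_\tau^{\tau+h}\!\! \dsf_{\mathrm{sup}}(w(s),v_\tau)\,\mathrm{d}s,
\end{equation*}
and pushing forward via the first inequality in \eqref{eq:WassEst} yields $W_1(\mu(\tau+h),(\Id+hv_\tau)_\sharp\mu_\tau) \le \varepsilon h$ for $h$ small, after shrinking $h_\varepsilon$. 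The linear-growth bound in item $(i)$ combined with the uniform moment bound $\Mpazo_1(\mu(t)) \le c_T$ keeps all these estimates uniform in $x$ on the relevant compact sets (or handled directly via the sublinear norm), which is what makes the $o(1)$ legitimate.

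For part $(b)$: now $\mu(\cdot) \in \Spazo_{[\tau,T]}(\mu_\tau)$ is given, generated by some selection $t \mapsto v(t) \in V(t,\mu(t))$, and a sequence $h_i \to 0$ is fixed. The idea is to consider the averaged velocities $u_i := \tfrac{1}{h_i}\int_\tau^{\tau+h_i} v(s)\,\mathrm{d}s \in C^0(\R^d,\R^d)$; by convexity and closedness of the images together with item $(ii)$ (which forces $V(s,\mu(s))$ to be $\dsf_{\mathrm{sup}}$-close to $V(\tau,\mu_\tau)$ for $s$ near $\tau$, using again $W_1(\mu(s),\mu_\tau) \le (1+2c_T)\int_\tau^s M \to 0$), the $u_i$ lie in a $\dsf_{\mathrm{sup}}$-neighbourhood of $V(\tau,\mu_\tau)$ shrinking with $h_i$, and they are locally equicontinuous and sublinear uniformly by item $(i)$. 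Hence, along a subsequence $h_{i_k}$, $u_{i_k} \to v_\tau^\varepsilon$ in the compact-open topology with $v_\tau^\varepsilon \in V(\tau,\mu_\tau)$ (closedness). Finally, $\mu(\tau+h_{i_k}) = X(\tau+h_{i_k},\cdot)_\sharp\mu_\tau$ and a Grönwall/Taylor comparison as in part $(a)$ — this time between $X(\tau+h,x)$ and $x + \int_\tau^{\tau+h} v(s,x)\,\mathrm{d}s = x + h\,u_i(x)$, followed by $|h u_i(x) - h v_\tau^\varepsilon(x)| \to 0$ on compacts — delivers $W_1(\mu(\tau+h_{i_k}),(\Id+h_{i_k}v_\tau^\varepsilon)_\sharp\mu_\tau) \le \varepsilon|h_{i_k}|$ for $k$ large.

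The main obstacle I anticipate is the construction of the ``good'' selection and the passage from an a.e.-in-$(\mu,v)$ Lebesgue-point property to a statement valid for \emph{every} $\mu_\tau$ and \emph{every} $\xi_\tau \in V(\tau,\mu_\tau)$ simultaneously on a single full-measure set $E$ of times: this requires carefully combining separability of $C^0(\R^d,\R^d)$ in the compact-open topology, the scalarly-$^*$ measurable structure, a countable dense subset of measures, the Lipschitz-type selection property $(ii)$, and a diagonal argument — exactly the technical heart of \cite[Section 3]{Frankowska1995}, here complicated by the fact that the ``state space'' is $\Pcal_1(\R^d)$ and velocities live in a Fréchet space rather than $\R^d$. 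The Grönwall estimates themselves, while needing care with the sublinear growth, are routine once the base-point drift $W_1(\mu(s),\mu_\tau)$ is controlled by Lemma \ref{lem:Estimates}.
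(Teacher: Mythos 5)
The paper does not actually prove Theorem \ref{thm:Infinitesimal} here: its ``proof'' is a citation to \cite[Theorems 3.1 and 3.2]{ViabPp}, so your sketch can only be compared with the methodology of that reference, which it broadly follows (Lebesgue-point set $E$, flow representation of solutions, second-order Gr\"onwall error as in Lemma \ref{lemcc}, and $L^1(\mu_\tau)$ estimates to pass from compact-open closeness of velocities to $W_1$ closeness of push-forwards). That overall plan is the right one.

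There is, however, one step that is asserted for a wrong reason and would fail as written. In part $(b)$ you claim that item $(ii)$ ``forces $V(s,\mu(s))$ to be $\dsf_{\operatorname{sup}}$-close to $V(\tau,\mu_\tau)$ for $s$ near $\tau$''. Hypothesis $(ii)$ gives no regularity in time: it only yields that $V(s,\mu(s))$ is within $L(s)\,W_1(\mu(s),\mu_\tau)$ of $V(s,\mu_\tau)$, and since $t \rightrightarrows V(t,\mu_\tau)$ is merely $\Lcal^1$-measurable, $V(s,\mu_\tau)$ can be far from $V(\tau,\mu_\tau)$ for $s$ arbitrarily close to $\tau$. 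The missing bridge is precisely the set-valued Lebesgue/density point property: at $\Lcal^1$-a.e.\ $\tau$ the averages $\tfrac1h\int_\tau^{\tau+h}V(s,\mu_\tau)\,\mathrm{d}s$ converge to the closed convex set $V(\tau,\mu_\tau)$, and this must be made to hold on a single full-measure set $E$ simultaneously for \emph{all} $\mu_\tau$ (via a countable dense family of measures and fields together with the Lipschitz transfer of $(ii)$, plus the uniform compactness of the images granted by $(i)$ and Ascoli--Arzel\`a). You do name this machinery when you introduce $E$, but the body of part $(b)$ replaces it by the incorrect continuity-in-time statement, and without it the inclusion $v_\tau^{\varepsilon}\in V(\tau,\mu_\tau)$ of the compact-open limit of the $u_i$ is unjustified. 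The analogous point arises in part $(a)$: the claim that ``$\tfrac1h\int_\tau^{\tau+h}v(s)\,\mathrm{d}s\to v_\tau$ for a suitable selection'' is not a consequence of $\tau$ being a Lebesgue point of $M(\cdot)$ and $L(\cdot)$, but is again this same property (one needs $\tfrac1h\int_\tau^{\tau+h}\mathrm{dist}(v_\tau,V(s,\mu_\tau))\,\mathrm{d}s\to 0$ for every $\mu_\tau$ and every $v_\tau\in V(\tau,\mu_\tau)$ at every $\tau\in E$). Finally, in part $(a)$ the selection $t\mapsto w(t)\in V(t,\mu(t))$ ``along the solution it generates'' is circular as stated; it requires a Filippov-type fixed-point or iterative construction (select first along the frozen measure $\mu_\tau$, then correct using $(ii)$), which should be spelled out rather than delegated to ``Carath\'eodory-selection theory''.
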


\begin{proof}
See \cite[Theorem 3.1]{ViabPp} for the first statement and \cite[Theorem 3.2]{ViabPp} for the second one. 
\end{proof}

The following lemma, which resembles item $(a)$ of the previous theorem, will play an important role in the arguments presented in Section \ref{section:USC} below. 

\begin{lemma}[An alternative infinitesimal behaviour]
\label{lemcc}
Let $M(\cdot) \in L^1([0,T],\R_+)$ and $v :[0,T] \times \R^d \to \R^d$ be a Carathéodory vector field such that
\begin{equation*}
|v(t,x)| \le M(t)(1 + |x|) \qquad \text{and} \qquad \Lip(v(t)) \leq M(t)
\end{equation*}
for $\Lcal^1$-almost every $t \in [0,T]$ and all $x \in \R^d$. Then, for every  $\tau \in [0,T]$ and each $\mu_\tau\in\mathcal P_1(\mathbb R^d)$, there exists a constant $c>0$ depending only on the magnitudes of $\Mpazo_1(\mu_{\tau})$ and $\Norm{M(\cdot)}_{L^1([0,T],\R_+)}$ such that the unique solution of the Cauchy problem
\begin{equation}
\label{eq:LemContinuity}
\left\{
\begin{aligned}
& \partial_t \mu(t) + \Div_x(v(t) \mu(t)) = 0, \\
& \mu(\tau) = \mu_{\tau}, 
\end{aligned}
\right.
\end{equation}
satisfies
\begin{equation*}
W_1\bigg(\mu(\tau+h) \,, \Big(\Id + \mathsmaller{\int_{\tau}^{\tau+h} v(s)\,\mathrm{d}s} \Big)_{\raisebox{2pt}{\hspace{-0.05cm}$\scriptstyle{\sharp}$}} \, \mu_\tau\bigg) \le   c \bigg(\int_{\tau}^{\tau+h}M(s)\,\mathrm{d}s\bigg)^2.
\end{equation*}
\end{lemma}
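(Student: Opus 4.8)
The plan is to represent the solution of \eqref{eq:LemContinuity} via the method of characteristics and to reduce the claimed Wasserstein bound to a pointwise estimate on the associated flow map. Under the standing assumptions, $v(t,\cdot)$ is globally Lipschitz and sublinear with both moduli controlled by the integrable function $M(\cdot)$, so the characteristic system $\partial_t X(t,x) = v(t,X(t,x))$, $X(\tau,x) = x$, admits for every $x \in \R^d$ a unique Carathéodory solution $t \in [\tau,T] \mapsto X(t,x)$, and by the classical Cauchy--Lipschitz theory the flow $X(t,\cdot):\R^d \to \R^d$ is Lipschitz and at most linearly growing. A standard argument (or the superposition principle, cf. \cite[Theorem 8.3.1]{AGS}) then identifies $t \mapsto X(t,\cdot)_\sharp \mu_{\tau}$ as \emph{the} solution of \eqref{eq:LemContinuity}, so that $\mu(\tau+h) = X(\tau+h,\cdot)_\sharp \mu_{\tau}$. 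Writing $\Phi_h := \Id + \int_{\tau}^{\tau+h} v(s)\,\mathrm{d}s \in C^0(\R^d,\R^d)$, which by Definition \ref{def:IntegralC0} is the continuous map $x \mapsto x + \int_{\tau}^{\tau+h} v(s,x)\,\mathrm{d}s$, the first inequality in \eqref{eq:WassEst} gives
\begin{equation*}
W_1\big( \mu(\tau+h), (\Phi_h)_\sharp \mu_{\tau} \big) \leq \int_{\R^d} \big| X(\tau+h,x) - \Phi_h(x) \big| \,\mathrm{d}\mu_{\tau}(x),
\end{equation*}
both maps being $\mu_\tau$-integrable by the sublinear growth of $X(\tau+h,\cdot)$ and $\Phi_h$ together with $\Mpazo_1(\mu_\tau) < +\infty$; it thus suffices to estimate the integrand.

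For the pointwise estimate I would subtract the integral representations. Since $X(\tau+h,x) - x = \int_{\tau}^{\tau+h} v(s,X(s,x))\,\mathrm{d}s$, we get $X(\tau+h,x) - \Phi_h(x) = \int_{\tau}^{\tau+h} \big( v(s,X(s,x)) - v(s,x) \big)\,\mathrm{d}s$, so that $\Lip(v(s)) \leq M(s)$ yields
\begin{equation*}
\big| X(\tau+h,x) - \Phi_h(x) \big| \leq \int_{\tau}^{\tau+h} M(s)\, | X(s,x) - x |\,\mathrm{d}s.
\end{equation*}
Then I would control $|X(s,x)-x|$ on $[\tau,\tau+h]$: from $|v(s,y)| \leq M(s)(1+|y|)$ and Grönwall's lemma one has $1 + |X(s,x)| \leq (1+|x|)\exp\!\big(\int_{\tau}^{s} M(r)\,\mathrm{d}r\big) \leq (1+|x|)\,e^{\|M\|_{L^1([0,T],\R_+)}}$, hence $| X(s,x) - x | \leq \int_{\tau}^{s} M(r)(1+|X(r,x)|)\,\mathrm{d}r \leq (1+|x|)\,e^{\|M\|_{L^1([0,T],\R_+)}} \int_{\tau}^{\tau+h} M(r)\,\mathrm{d}r$. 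Substituting back produces
\begin{equation*}
\big| X(\tau+h,x) - \Phi_h(x) \big| \leq (1+|x|)\, e^{\|M\|_{L^1([0,T],\R_+)}} \bigg( \int_{\tau}^{\tau+h} M(s)\,\mathrm{d}s \bigg)^2,
\end{equation*}
and integrating against $\mu_\tau$, using $\int_{\R^d}(1+|x|)\,\mathrm{d}\mu_\tau(x) = 1 + \Mpazo_1(\mu_\tau)$, gives the statement with $c := \big(1+\Mpazo_1(\mu_\tau)\big)\, e^{\|M\|_{L^1([0,T],\R_+)}}$, which depends only on the magnitudes of $\Mpazo_1(\mu_\tau)$ and $\|M\|_{L^1([0,T],\R_+)}$.

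I do not expect a serious obstacle here: the computation is essentially a first-order Taylor expansion of the flow around the ``frozen'' field, the quadratic gain in $\int M$ coming from the double time-integration. The only points requiring care — rather than ingenuity — are the justification that $t \mapsto X(t,\cdot)_\sharp \mu_\tau$ is the unique solution of \eqref{eq:LemContinuity} (classical under the Cauchy--Lipschitz regularity of $v$), the identification of the $C^0(\R^d,\R^d)$-valued Bochner integral $\int_{\tau}^{\tau+h} v(s)\,\mathrm{d}s$ with the pointwise integral $x \mapsto \int_{\tau}^{\tau+h} v(s,x)\,\mathrm{d}s$ (which is exactly Definition \ref{def:IntegralC0}), and the joint measurability in $x$ of the flow needed to integrate the pointwise bound against $\mu_\tau$, which follows from the continuity of $x \mapsto X(s,x)$.
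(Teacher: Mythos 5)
Your proposal is correct and follows essentially the same route as the paper: represent the solution via the characteristic flow, compare $\Phi^v_{(\tau,\tau+h)}$ with $\Id + \int_\tau^{\tau+h} v(s)\,\mathrm{d}s$ pointwise using the Lipschitz bound and the sublinear growth of the flow, and conclude with the $L^1$-to-$W_1$ estimate \eqref{eq:WassEst}. The only difference is cosmetic constant bookkeeping (your explicit $e^{\|M\|_{L^1}}$ versus the paper's generic $C_T$).
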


\begin{proof}
Under our working assumptions, it stems from the standard theory of ODEs (see e.g. \cite[Chapter 16]{AmbrosioBS2021}) that $v : [0,T] \times \R^d \to \R^d$ generates a well-defined characteristic flow $(\Phi_{(\tau,t)}^v)_{t \in [0,T]} \subseteq C^0(\R^d,\R^d)$ solution of the Cauchy problems
\begin{equation*}
\Phi_{(\tau, t)}^{v}(x) = x + \INTSeg{v \Big( s , \Phi_{(\tau, s)}^{v}(x) \Big)}{s}{\tau}{t}
\end{equation*}
for all $(t,x) \in [\tau,T] \times \R^d$, and that the unique solution of \eqref{eq:LemContinuity} is given by 
\begin{equation}
\label{eq:FlowRepresentation}
\mu(t) = \big( \Phi_{(\tau,t)}^{v} \big)_{\sharp} \, \mu_{\tau}.
\end{equation}
Besides, one may easily verify that under our working assumptions, there exists $C_T > 0$ depending only on the magnitude of   $\Norm{M(\cdot)}_{L^1([0,T],\R_+)}$ such that
\begin{equation*}
|\Phi_{(\tau,t)}^{v}(x)| \leq C_T(1+|x|)
\end{equation*}
for all $(t,x) \in [0,T] \times \R^d$. This implies in particular that
\begin{equation*}
\begin{aligned}
\bigg|\Phi_{(\tau, t)}^{v}(x) - x - \int_{\tau}^{t}v(s,x)\,\mathrm{d}s \bigg| &\le \int_{\tau}^{t}\Big| v\Big(s,\Phi_{(\tau, s)}^{v}(x) \Big)-v(s,x) \Big|\,\mathrm{d}s \\
& \le \int_{\tau}^{t}M(s)\big|\Phi_{(\tau, s)}^{v}(x) -x\big|\,\mathrm{d}s \\
& \le (1+C_T) (1+|x|) \bigg(\int_{\tau}^{t}M(s)\,\mathrm{d}s\bigg)\bigg(\int_{\tau}^{t}M(s)\,\mathrm{d}s\bigg)
\end{aligned}
\end{equation*}
for all $(t,x) \in [0,T] \times \R^d$, which together with \eqref{eq:FlowRepresentation} further entails 
\begin{equation*}
\begin{aligned}
W_1\bigg(\mu(\tau+h) \,, \Big(\Id + \mathsmaller{\int_{\tau}^{\tau+h} v(s)\,\mathrm{d}s} \Big)_{\raisebox{2pt}{\hspace{-0.05cm}$\scriptstyle{\sharp}$}} \mu_\tau\bigg) & \leq ~ \Big\| \, \Phi_{(\tau,\tau+h)}^{v} - \Big(\Id + \mathsmaller{\int_{\tau}^{\tau+h} v(s)\,\mathrm{d}s} \Big) \, \Big\|_{L^1(\R^d,\R^d;\mu_{\tau})} \\
& \leq  (1+C_T) (1+\Mpazo_1(\mu_{\tau})) \bigg(\int_{\tau}^{\tau+h}M(s)\,\mathrm{d}s\bigg)^2.
\end{aligned}
\end{equation*}
It is then enough to set $c:=(1+C_T) (1+\Mpazo_1(\mu_{\tau}))$ to obtain the desired estimate. 
\end{proof}

We end these preliminaries on measure dynamics by showcasing a simplified version of the general closure principle \cite[Theorem VI-4]{Castaing1977}, tailored to set-valued mappings $V : I \times \Pcal_1(\R^d) \rightrightarrows C^0(\R^d,\R^d)$, where $I \subseteq [0,T]$ is a closed subinterval. 

\begin{proposition}[The Castaing-Valadier closure principle for  $C^0(\R^d,\R^d)$-valued multifunctions]
\label{prop:CastaingValadier}
Let $V : I \times \Pcal_1(\R^d) \rightrightarrows C^0(\R^d,\R^d)$ be a set-valued mapping with compact convex images, such that $t \in I \rightrightarrows V(t,\mu)$ is $\Lcal^1$-measurable for all $\mu \in \Pcal_1(\R^d)$ and $\mu \in \Pcal_1(\R^d) \rightrightarrows V(t,\mu)$ is upper semicontinuous for $\Lcal^1$-almost every $t \in I$. Let further $\{\mu_n(\cdot)\}_{n \in \N}$ and $\{v_n(\cdot)\}_{n \in \N}$ be two sequences of maps valued in $\Pcal_1(\R^d)$ and $C^0(\R^d,\R^d)$ respectively, which satisfy the following. 
\begin{enumerate}
\item[$(i)$] There exists map $\mu : I \to \Pcal_1(\R^d)$ such that 
\begin{equation*}
W_1\big(\mu_n(t),\mu(t)\big) ~\underset{n \to +\infty}{\longrightarrow}~ 0
\end{equation*}
for $\Lcal^1$-almost every $t \in I$. 
\item[$(ii)$] The maps $\{v_n(\cdot)\}_{n \in \N}$ are $\Lcal^1$-measurable, and there exists $v : I \to C^0(\R^d,\R^d)$ such that 
\begin{equation*}
\INTDom{\xi(t) \langle \Bnu , v(t) - v_n(t) \rangle_{C^0(\R^d,\R^d)}}{I}{t} ~\underset{n \to +\infty}{\longrightarrow}~ 0
\end{equation*}
for every $\Bnu \in \Mcal_c(\R^d,\R^d)$ and each $\xi(\cdot) \in L^{\infty}(I,\R)$.
\item[$(iii)$] It holds that $v_n(t) \in V(t,\mu_n(t))$ for $\Lcal^1$-almost every $t \in I$ and each $n\in \N$. 
\end{enumerate}
Then, it holds that $v(t) \in V(t,\mu(t))$ for $\Lcal^1$-almost every $t \in I$. 
\end{proposition}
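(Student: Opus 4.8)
The plan is to reduce the statement to the classical Castaing--Valadier closure principle \cite[Theorem VI-4]{Castaing1977} applied in a suitable separable Banach space, by localising to balls $B(0,k)$ and using a diagonal argument. The key observation is that upper semicontinuity of $\mu \rightrightarrows V(t,\mu)$ together with the convexity and compactness of its images, combined with a Mazur-type argument, should force the weak limit $v(t)$ to lie in $V(t,\mu(t))$ pointwise a.e. I would proceed as follows.

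First, I would fix $k \in \N$ and work in the separable Banach space $X_k := C^0(B(0,k),\R^d)$. Set $\tilde v_n(t) := v_n(t)_{|B(0,k)}$ and $\tilde v(t) := v(t)_{|B(0,k)}$, and let $\tilde V_k(t,\mu) := \{ w_{|B(0,k)} : w \in V(t,\mu) \} \subseteq X_k$, which has compact convex images (as the continuous linear image of a compact convex set) and inherits $\Lcal^1$-measurability in $t$ and upper semicontinuity in $\mu$. From hypothesis $(ii)$, testing against $\Bnu = \psi \, \delta_x$ for $x \in B(0,k)$ and $\psi \in \R^d$, and against $\xi \in L^\infty(I,\R)$, one sees that $\tilde v_n \to \tilde v$ weakly in $L^1(I,X_k)$ — here I would invoke the identification of $\Mcal(B(0,k),\R^d)$ with $X_k^*$ and note that the functionals $g \mapsto \int_I \xi(t) \langle \Bnu(t), g(t)\rangle\, \dn t$ with $\Bnu(\cdot)$ essentially bounded, scalarly-$^*$ measurable, are total in $L^1(I,X_k)^*$; strictly speaking $L^1(I,X_k)^* = L^\infty_{w^*}(I, X_k^*)$, and the hypothesis as stated only gives convergence against ``constant-in-$t$ up to a scalar'' functionals, so I may instead need to extract a further weakly convergent subsequence of $\{\tilde v_n\}$ using Lemma \ref{lem:comvf} (whose hypotheses hold thanks to item $(i)$ of the ambient assumptions on $V$, which bound and equicontinuity-control the selections) and identify its limit with $\tilde v$ via the hypothesis $(ii)$, which suffices to pin down the limit).

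Next, I would apply the Castaing--Valadier closure theorem in $X_k$: since $\tilde v_n \to \tilde v$ weakly in $L^1(I,X_k)$, $W_1(\mu_n(t),\mu(t)) \to 0$ a.e. by $(i)$, $\tilde v_n(t) \in \tilde V_k(t,\mu_n(t))$ a.e. by $(iii)$, and $\tilde V_k$ is $\Lcal^1 \otimes \Bcal$-measurable with compact convex values and u.s.c. in the second variable, the closure principle yields $\tilde v(t) \in \tilde V_k(t,\mu(t))$ for $\Lcal^1$-a.e.\ $t \in I$; that is, there exists a null set $N_k \subseteq I$ such that $v(t)_{|B(0,k)} \in \tilde V_k(t,\mu(t))$ for all $t \in I \setminus N_k$. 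Setting $N := \bigcup_{k} N_k$, which is still $\Lcal^1$-null, we get that for every $t \in I \setminus N$ and every $k \in \N$ there exists $w_k \in V(t,\mu(t))$ with $w_k = v(t)$ on $B(0,k)$. Finally, since $V(t,\mu(t))$ is compact in $C^0(\R^d,\R^d)$ for the compact-open topology, the sequence $\{w_k\}_{k}$ has a convergent subsequence $w_{k_j} \to w_\infty \in V(t,\mu(t))$; but $w_{k_j} \to v(t)$ pointwise on all of $\R^d$ (each $x$ lies in $B(0,k)$ for $k$ large, on which all sufficiently late $w_{k_j}$ agree with $v(t)$), hence $w_\infty = v(t)$ and therefore $v(t) \in V(t,\mu(t))$, as desired.

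The main obstacle I anticipate is the precise identification of the weak limit in each $L^1(I,X_k)$: the hypothesis $(ii)$ provides convergence only against a restricted class of dual elements (products $\xi(t)\Bnu$ with $\Bnu$ constant in $t$), which is weaker than testing against the full dual $L^\infty_{w^*}(I,X_k^*)$, so I must first secure weak $L^1$-compactness of $\{\tilde v_n\}$ — available from the sublinearity and Lipschitz bounds in assumption $(i)$ on $V$ via Lemma \ref{lem:comvf} and the Ascoli--Arzelà compactness of the fibers $K_t$ — extract a subsequence converging weakly to some $\hat v$, and then argue that the restricted-functional convergence in $(ii)$ forces $\hat v = \tilde v$ a.e. A secondary technical point is verifying that $\tilde V_k$ is jointly measurable so that $t \rightrightarrows \tilde V_k(t,\mu(t))$ is measurable along the given curve $\mu(\cdot)$ — this follows from the Carathéodory-type property (measurability in $t$, u.s.c.\ in $\mu$) and standard results quoted after Definition \ref{def:Measurability}, applied after noting $\mu(\cdot)$ is $\Lcal^1$-measurable as an a.e.\ limit of the (necessarily measurable) $\mu_n(\cdot)$.
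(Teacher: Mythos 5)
Your argument is correct in substance, but it reaches the conclusion by a genuinely different route than the paper. The paper applies the abstract Castaing--Valadier principle \cite[Theorem VI-4]{Castaing1977} \emph{once}, directly in the separable Fr\'echet space $(C^0(\R^d,\R^d),\dsf_{cc})$ with dual $\Mcal_c(\R^d,\R^d)$: the only work is to check that the dual of a separable Fr\'echet space carries a countable separating set and that scalar and Borel measurability coincide there, after which items $(i)$--$(iii)$ match the abstract hypotheses verbatim. You instead localise to the Banach spaces $X_k = C^0(B(0,k),\R^d)$, push $V$ forward through the restriction maps (which indeed preserve compact convex values, $\Lcal^1$-measurability in $t$ and upper semicontinuity in $\mu$, the latter because $\dsf_{cc}$-smallness controls the $C^0(B(0,k))$-norm up to the factor $2^{-k}$), apply the closure principle in each $X_k$, and then stitch the countably many conclusions together using compactness of $V(t,\mu(t))$ in the compact-open topology; this last gluing step is carried out correctly. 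What your approach buys is that it only invokes the Banach-space incarnation of the closure theorem plus elementary facts; what it costs is the extra layer of localisation, a.e.\ exceptional-set bookkeeping over $k$, and the final compactness argument, all of which the Fr\'echet-space formulation dispenses with.

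Two caveats on your hedges. First, the ``main obstacle'' you anticipate is not one: the version of the closure principle used in the paper requires convergence only against functionals of the product form $\xi(t)\,e'$ with $e'$ a \emph{fixed} dual element and $\xi \in L^\infty(I,\R)$ -- exactly what hypothesis $(ii)$ provides after observing that every $\Bnu \in \Mcal(B(0,k),\R^d)$ is a compactly supported measure on $\R^d$ -- so no upgrade to full weak $L^1(I,X_k)$ convergence, and no extraction via Lemma \ref{lem:comvf}, is needed. Second, your fallback plan for that upgrade would not be available anyway as written: it appeals to sublinearity and Lipschitz bounds ``in assumption $(i)$ on $V$'', but the proposition makes no such growth assumptions (those belong to \eqref{eq:Itemi} and Hypotheses \ref{hyp:USC}, not to the statement being proved), so you should simply delete that branch. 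Likewise, measurability of $\mu(\cdot)$ is neither assumed for $\mu_n(\cdot)$ nor required by the abstract theorem, so your closing ``secondary technical point'' can be dropped.
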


\begin{proof}
For the convenience of the reader and self-containedness, we detail in Appendix \ref{AppCastaingValadier} below how this result can be derived from its abstract counterpart \cite[Theorem VI-4]{Castaing1977}. 
\end{proof}

\begin{corollary}[On the existence of integrable velocity selections]
\label{cors}
Let $V : I \times \Pcal_1(\R^d) \rightrightarrows C^0(\R^d,\R^d)$ be a set-valued mapping with nonempty compact convex image satisfying \eqref{eq:Itemi}. Suppose also that $t \in I \rightrightarrows V(t,\mu)$ is $\Lcal^1$-measurable for all $\mu \in \Pcal_1(\R^d)$, whereas $\mu \in \Pcal_1(\R^d) \rightrightarrows V(t,\mu)$ is upper semicontinuous for $\Lcal^1$-almost every $t \in I$. Then for every $\mu(\cdot) \in C^0(I,\mathcal P_1(\mathbb R^d))$, there exists an integrable selection $t \in I \mapsto v(t) \in V(t,\mu(t))$. 
\end{corollary}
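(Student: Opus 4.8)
The plan is to construct the integrable selection by a countable exhaustion/diagonal argument, combining a measurable selection theorem on each compact ball with the weak compactness criterion of Lemma \ref{lem:comvf} and the closure principle of Proposition \ref{prop:CastaingValadier}. First I would fix $\mu(\cdot) \in C^0(I,\Pcal_1(\R^d))$ and observe that the map $t \in I \rightrightarrows V(t,\mu(t))$ is $\Lcal^1$-measurable: indeed, since $V$ is Carathéodory-like (measurable in $t$, upper semicontinuous in $\mu$) with closed images, the composition with the continuous curve $\mu(\cdot)$ is $\Lcal^1$-measurable, as recalled after Definition \ref{def:Measurability}. Then I would want to invoke a measurable selection theorem (Kuratowski--Ryll-Nardzewski, see \cite[Chapter 8]{Aubin1990}) to get an $\Lcal^1$-measurable $v : I \to C^0(\R^d,\R^d)$ with $v(t) \in V(t,\mu(t))$ for a.e. $t$.

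The subtlety is that measurable selection theorems apply cleanly in complete separable metric spaces, so I would work with $\dsf_{cc}$ on $C^0(\R^d,\R^d)$, which was shown earlier to be separable and completely metrisable. The measurability of $t \rightrightarrows V(t,\mu(t))$ with respect to the compact-open topology (which is what \eqref{eq:Itemi} and the hypotheses give) is exactly what is needed. Once a measurable selection $v(\cdot)$ is obtained, the sublinear growth bound in \eqref{eq:Itemi} together with Lemma \ref{lem:Estimates} applied to the fixed continuous curve $\mu(\cdot)$ (whose first moments are bounded on the compact interval $I$, since $\mu(\cdot)$ is continuous into $\Pcal_1(\R^d)$, hence its image is compact and \eqref{eq:RelativeCompactness} gives a uniform moment bound $\sup_{t \in I}\Mpazo_1(\mu(t)) =: c < +\infty$) yields
\begin{equation*}
|v(t,x)| \leq M(t)\big(1 + |x| + c\big) \qquad \text{and} \qquad \Lip(v(t)) \leq M(t)
\end{equation*}
for $\Lcal^1$-a.e. $t \in I$ and all $x \in \R^d$. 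The growth bound shows $\NormL{v(t)}{1}{B(0,R),\R^d} \leq M(t)(1 + R + c)$ is in $L^1(I,\R_+)$ for each $R > 0$, so each restriction $v(\cdot)_{|B(0,R)} : I \to C^0(B(0,R),\R^d)$ is Bochner integrable; hence $v(\cdot)$ is integrable in the sense of Definition \ref{def:IntegralC0}, which is precisely the required notion.

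The main obstacle I anticipate is ensuring the measurable selection is genuinely $\Lcal^1$-measurable with respect to the right topology and that measurability of $t \rightrightarrows V(t,\mu(t))$ transfers correctly — in particular that upper semicontinuity in $\mu$ (rather than full continuity) still suffices for the composition to be measurable. Here one uses that an upper semicontinuous set-valued map with closed values has closed graph, so $(t,\mu) \mapsto V(t,\mu)$ is a normal integrand / Carathéodory multifunction in the sense that permits composition with the measurable (indeed continuous) map $t \mapsto \mu(t)$ while preserving $\Lcal^1$-measurability; this is exactly the fact recalled in the text just after Definition \ref{def:Measurability}. An alternative, slightly more self-contained route avoiding an abstract selection theorem would be: approximate $\mu(\cdot)$ by piecewise-constant curves $\mu_n(\cdot)$, on each constant piece pick a measurable selection of $t \rightrightarrows V(t,\mu_n)$ (which is a measurable multifunction into a Polish space, so Kuratowski--Ryll-Nardzewski applies directly), assemble $v_n(\cdot)$, use Lemma \ref{lem:comvf} to extract a weakly convergent subsequence with limit $v(\cdot)$, and finally invoke Proposition \ref{prop:CastaingValadier} to conclude $v(t) \in V(t,\mu(t))$ a.e. This second route reuses machinery already developed in the excerpt and is likely the one I would write out in detail, since it sidesteps any delicate discussion of selection theorems in non-normed Polish spaces.
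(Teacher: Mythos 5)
Your second route -- approximating $\mu(\cdot)$ by simple (piecewise constant) curves, selecting measurably via Kuratowski--Ryll-Nardzewski on each piece, extracting a weakly convergent subsequence with Lemma \ref{lem:comvf}, and closing with Proposition \ref{prop:CastaingValadier} -- is exactly the paper's proof, so since that is the route you say you would write out in detail, the proposal is correct and takes essentially the same approach. Your hesitation about the first route is well founded: the composition fact recalled after Definition \ref{def:Measurability} is stated for Carathéodory maps, i.e.\ requires continuity (not mere upper semicontinuity) of the multifunction in the measure variable, so the direct application of a selection theorem to $t \rightrightarrows V(t,\mu(t))$ does not go through as stated, which is precisely why the approximation-and-closure route is the one used.
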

\begin{proof}
Since $\mu(\cdot) \in C^0(I,\Pcal_1(\R^d))$ is continuous, there exists a sequence $\{\mu_n(\cdot)\}_{n \in \N}$ of simple $\Lcal^1$-measurable functions such that
\begin{equation*}
W_1(\mu_n(t),\mu(t)) ~\underset{n \to +\infty}{\longrightarrow}~ 0    
\end{equation*}
for $\Lcal^1$-almost every $t \in [0,T]$. Then, for each $n\in\mathbb N$, the set-valued map $t\rightrightarrows V(t,\mu_n(t))$ is $\mathcal L^1$-measurable, and by the Kuratowski–Ryll-Nardzewski measurable selection theorem (see e.g. \cite[Theorem 8.1.3]{Aubin1990}), we can find a sequence of $\mathcal L^1$-measurable functions $\{v_n(\cdot)\}_{n \in \N}$ such that  
\begin{align*}
v_n(t)\in V(t,\mu_n(t))
\end{align*}
for $\mathcal L^1$-almost every $t\in I$ and each $n\in\mathbb N$. It follows in particular from (\ref{eq:Itemi}) that each mapping $v : I \to C^0(\R^d,\R^d)$ is integrable in the sense of Definition \ref{def:IntegralC0}, and by Lemma \ref{lem:comvf}, we may find an extracted subsequence $\{v_{n_k}(\cdot)\}_{k\in\mathbb N}$ along with an integrable map $ v :I \to C^0(\mathbb R^d,\mathbb R^d)$  such that 
\begin{equation*}
\int_I  \langle \Bnu(t) , v(t) - v_{n_k}(t)\rangle_{C^0(\mathbb R^d,\mathbb R^d)}\,\mathrm{d}t \underset{k \to +\infty}{\longrightarrow} 0
\end{equation*}
for every scalarly-$^*$ measurable $\Bnu: I\to \mathcal M_c(\mathbb R^d,\mathbb R^d)$ with $\esssup_{t \in I} \int_{\mathbb R^d}(1+|x|)\, \mathrm{d} |\Bnu(t)|(x) < +\infty$.  We then see that the assumptions of Proposition \ref{prop:CastaingValadier} are satisfied, from whence we deduce that $v(t) \in V(t,\mu(t))$ for $\Lcal^1$-almost every $t \in I$, which concludes the proof.
\end{proof}

\section{Viability in the Lipschitz case}
\label{section:Lip}

In this section we prove our first main result, which extends the viability theorems of \cite{ViabPp} obtained  under Lipschitz regularity assumptions for dynamics in $p$-Wasserstein spaces with $p \in (1,+\infty)$ to the 1-Wasserstein space. In this context, we consider a set-valued mapping $V:[0,T]\times \mathcal P_1(\mathbb R^d)\rightrightarrows C^0(\mathbb R^d,\mathbb R^d)$ complying with the following hypotheses. 

\begin{taggedhyp}{\textnormal{(CL)}} 
\label{hyp:CICL} \hfill
\begin{itemize}
\item[$(i)$] The set-valued map $V : [0,T] \times \Pcal_1(\R^d) \rightrightarrows C^0(\R^d,\R^d)$ is Carathéodory with nonempty, closed and convex images. 
\item[$(ii)$] There exists a function $M(\cdot)\in L^1([0,T],\mathbb R_+)$ such that 
\begin{equation*}
|v(x)| \le M(t) \Big( 1 + |x| + \Mpazo_1(\mu)  \Big) \qquad \text{and} \qquad \Lip(v ) \leq M(t)
\end{equation*}
$\Lcal^1$-almost every $t \in [0,T]$, any $(\mu,v) \in \Graph(V(t))$ and all $x \in \R^d$.
\item[$(iii)$]There exists a function $L(\cdot)\in L^1([0,T],\mathbb R_+)$ such that for $\Lcal^1$-almost every $t\in[0,T]$, any $\mu,\nu \in \Pcal_1(\R^d)$, and each $v \in V(t,\mu)$, there exists $w \in V(t,\nu)$ for which
\[
\dsf_{\operatorname{sup}}(v, w) \leq L(t)\, W_1(\mu, \nu).
\]
\end{itemize}
\end{taggedhyp}

Given a pair $(\tau,\mu_{\tau}) \in[0,T] \times \Pcal_1(\R^d)$, we shall consider the set-valued Cauchy problem
\begin{equation}\label{CILip}
\left\{
\begin{aligned}
& \partial_t \mu(t) \in -\Div_x \Big( V\big(t,\mu(t)\big)\, \mu(t) \Big)\\
& \mu(\tau) = \mu_{\tau}, 
\end{aligned}
\right.
\end{equation}
and say that a set-valued map $\Qpazo:[0,T]\to \mathcal P_1(\mathbb R^d)$ is \textit{viable} for \eqref{CILip} if for any  $\tau\in[0,T]$ and $\mu_\tau\in \Qpazo(\tau)$, there exists a solution $\mu(\cdot) \in \Spazo_{[\tau,T]}(\mu_{\tau})$ of the latter dynamics such that 
\begin{equation*}
\mu(t)\in \Qpazo(t)
\end{equation*}
for all times $t \in [\tau,T]$. We recall that the relevant geometric object allowing to characterise said viability is the so-called \textit{graphical derivative} at an element $(\tau,\nu)\in \graph(\Qpazo)$, defined by
\begin{equation}
\label{eq:GraphicalDef}
D \Qpazo(\tau|\nu):= \left\lbrace \xi\in L^1(\mathbb R^d, \mathbb R^d; \nu) ~\,\textnormal{s.t.}~ \, \liminf_{h\to 0^+}\frac{1}{h} W_1\Big((\Id + h\xi)_{\sharp}\nu \,; \Qpazo(\tau+h)\Big) = 0 \right\rbrace,
\end{equation}	
where here and in all that follows, we adopt the notation $W_1(\mu \, ;\Qpazo) := \inf_{\nu \in \Qpazo} W_1(\mu,\nu)$ to lighten the exposition. {We also recall that a subset of a metric space is said to be proper if its bounded closed subsets are compact}. We are ready to state the main result of this section.

{
\begin{theorem}[Viability in the Lipschitz framework]
\label{Thm1}
Suppose that Hypotheses \ref{hyp:CICL} hold and let $\Qpazo:[0,T]\rightrightarrows\mathcal P_1(\mathbb R^d)$ be absolutely continuous with nonempty proper images. Then, the following statements are equivalent. 
\begin{itemize}
\item[$(a)$] The set-valued map $\Qpazo : [0,T] \rightrightarrows \Pcal_1(\R^d)$ is viable for \eqref{CILip}.
\item[$(b)$] It holds that 
\begin{equation}
\label{eq:PointwiseViabilityCond}
V(\tau,\nu)\cap D\Qpazo(\tau|\nu)\neq \emptyset
\end{equation} 
for $\Lcal^1$-almost every $\tau \in [0,T]$ and all $\nu\in \Qpazo(\tau)$.
\end{itemize}
\end{theorem}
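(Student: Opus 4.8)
The implication $(a) \Rightarrow (b)$ is the easy one and follows from the infinitesimal description of the reachable set in Theorem~\ref{thm:Infinitesimal}; the substance of the statement is $(b) \Rightarrow (a)$, which I would obtain, following the method of \cite{ViabPp}, through a Gr\"onwall estimate for the distance between the reachable set of \eqref{CILip} and the constraint set, followed by a compactness extraction. For necessity, fix $\tau_0$ in the full-measure set $E$ of Theorem~\ref{thm:Infinitesimal} (intersected with the Lebesgue points of $M(\cdot)$) and an \emph{arbitrary} $\nu_0 \in \Qpazo(\tau_0)$. Viability produces a solution $\mu(\cdot) \in \Spazo_{[\tau_0,T]}(\nu_0)$ with $\mu(\tau_0) = \nu_0$ and $\mu(t) \in \Qpazo(t)$ for all $t$. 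Applying Theorem~\ref{thm:Infinitesimal}$(b)$ at $\tau_0$ with $\varepsilon = 1/n$ yields velocities $v^n \in V(\tau_0,\nu_0)$ and times $h^n \to 0^+$ with $W_1\big(\mu(\tau_0 + h^n),(\Id + h^n v^n)_\sharp \nu_0\big) \le h^n/n$. By Hypotheses~\ref{hyp:CICL}$(ii)$ the set $V(\tau_0,\nu_0)$ consists of an equi-Lipschitz, equi-sublinear family, hence it is compact in $(C^0(\R^d,\R^d),\dsf_{cc})$ by Ascoli--Arzel\`a and closedness; extracting $v^{n_j} \to v^\ast \in V(\tau_0,\nu_0)$ and upgrading this to convergence in $L^1(\R^d,\R^d;\nu_0)$ through the uniform integrability of $1+|x|$ against $\nu_0 \in \Pcal_1(\R^d)$, the first estimate in \eqref{eq:WassEst} gives
\begin{equation*}
\tfrac{1}{h^{n_j}} W_1\big((\Id + h^{n_j} v^\ast)_\sharp \nu_0 \,;\, \Qpazo(\tau_0 + h^{n_j})\big) \le \| v^\ast - v^{n_j}\|_{L^1(\R^d,\R^d;\nu_0)} + \tfrac{1}{n_j} \underset{j \to +\infty}{\longrightarrow} 0,
\end{equation*}
so that $v^\ast \in V(\tau_0,\nu_0) \cap D\Qpazo(\tau_0|\nu_0)$. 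Since the exceptional set $[0,T]\setminus E$ does not depend on $\nu_0$, this proves $(b)$.

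\textbf{Sufficiency: the distance function and its Gr\"onwall inequality.} For $(b) \Rightarrow (a)$, fix $(\tau_0,\mu_0) \in \graph(\Qpazo)$ and set $g(t) := \inf\{ W_1(\mu,\nu) ~\textnormal{s.t.}~ \mu \in \Rpazo_{(\tau_0,t)}(\mu_0),\ \nu \in \Qpazo(t)\}$ for $t \in [\tau_0,T]$. By Proposition~\ref{reachabscon}$(b)$ the mapping $t \rightrightarrows \Rpazo_{(\tau_0,t)}(\mu_0)$ has compact images and is Hausdorff absolutely continuous, while $\Qpazo$ is absolutely continuous with closed images, so Proposition~\ref{Propabs} shows that $g$ is absolutely continuous, with $g(\tau_0) = 0$. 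The key point is that
\begin{equation*}
\liminf_{h \to 0^+} \frac{g(t+h) - g(t)}{h} \le L(t)\, g(t) \qquad \text{for a.e. } t \in [\tau_0,T].
\end{equation*}
To see this at a suitable $t$, let $\mu_t \in \Rpazo_{(\tau_0,t)}(\mu_0)$ and $\nu_t \in \Qpazo(t)$ realise $W_1(\mu_t,\nu_t) = g(t)$ (the infimum is attained since the reachable set is compact and $\Qpazo(t)$ closed). By $(b)$ choose $\xi \in V(t,\nu_t) \cap D\Qpazo(t|\nu_t)$: Theorem~\ref{thm:Infinitesimal}$(a)$ then produces, for each $j$, a curve $\rho_j \in \Spazo_{[t,T]}(\nu_t)$ with $W_1\big(\rho_j(t+h),(\Id+h\xi)_\sharp\nu_t\big) \le h/j$ for small $h$, and the membership $\xi \in D\Qpazo(t|\nu_t)$ furnishes times $h^{(j)} \to 0^+$ along which $\tfrac{1}{h^{(j)}} W_1\big((\Id+h^{(j)}\xi)_\sharp\nu_t\,;\,\Qpazo(t+h^{(j)})\big) \to 0$, hence $\tfrac{1}{h^{(j)}} W_1\big(\rho_j(t+h^{(j)})\,;\,\Qpazo(t+h^{(j)})\big) \to 0$. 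A Filippov-type comparison based on Hypotheses~\ref{hyp:CICL}$(iii)$ (along the lines of the continuous-dependence estimates in \cite{ContIncPp,ViabPp}) provides companion solutions $\sigma_j \in \Spazo_{[t,T]}(\mu_t)$ with $W_1(\sigma_j(s),\rho_j(s)) \le e^{\int_t^s L(\theta)\,\mathrm{d}\theta}\,W_1(\mu_t,\nu_t) = e^{\int_t^s L(\theta)\,\mathrm{d}\theta} g(t)$. Since $\Rpazo_{(t,t+h)}(\mu_t) \subseteq \Rpazo_{(\tau_0,t+h)}(\mu_0)$ by the semigroup property, one gets $g(t+h^{(j)}) \le W_1\big(\sigma_j(t+h^{(j)})\,;\,\Qpazo(t+h^{(j)})\big) \le e^{\int_t^{t+h^{(j)}} L} g(t) + o(h^{(j)})$; dividing by $h^{(j)}$ and letting $j \to \infty$ at a Lebesgue point of $L(\cdot)$ yields the claimed inequality. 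Gr\"onwall's lemma, the absolute continuity of $g$ and $g(\tau_0)=0$ then force $g \equiv 0$ on $[\tau_0,T]$.

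\textbf{Sufficiency: extraction of a viable curve.} The argument just given applies verbatim from any initial datum $(s,\nu) \in \graph(\Qpazo)$, so the vanishing of the associated distance function, combined with the compactness of the reachable sets and closedness of $\Qpazo(t)$, gives $\Rpazo_{(s,t)}(\nu) \cap \Qpazo(t) \neq \emptyset$ for all $t \in [s,T]$ whenever $\nu \in \Qpazo(s)$. Fix $(\tau_0,\mu_0)$ and, for each $n$, the mesh $t_i^n := \tau_0 + \tfrac{i}{n}(T-\tau_0)$; defining inductively $\nu_0^n := \mu_0$ and $\nu_{i+1}^n \in \Rpazo_{(t_i^n,t_{i+1}^n)}(\nu_i^n) \cap \Qpazo(t_{i+1}^n)$ and concatenating the corresponding solutions yields $\mu^n \in \Spazo_{[\tau_0,T]}(\mu_0)$ with $\mu^n(t_i^n) \in \Qpazo(t_i^n)$ for every $i$. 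By Proposition~\ref{reachabscon}$(a)$ the set $\Spazo_{[\tau_0,T]}(\mu_0)$ is compact for uniform convergence, so along a subsequence $\mu^{n_k} \to \mu^\ast \in \Spazo_{[\tau_0,T]}(\mu_0)$; given $t \in [\tau_0,T]$, picking mesh points $t_{i_k}^{n_k} \to t$ and invoking the uniform absolute continuity modulus of Lemma~\ref{lem:Estimates} gives $W_1\big(\mu^{n_k}(t_{i_k}^{n_k}),\mu^\ast(t)\big) \to 0$ with $\mu^{n_k}(t_{i_k}^{n_k}) \in \Qpazo(t_{i_k}^{n_k})$; the local-Hausdorff absolute continuity of $\Qpazo$, which in particular makes its graph closed, then yields $\mu^\ast(t) \in \Qpazo(t)$. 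Since $\mu^\ast(\tau_0)=\mu_0$, the curve $\mu^\ast$ is a viable solution, and $(a)$ follows.

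\textbf{Main obstacle.} I expect the hard part to be the differential inequality for $g$, and within it the Filippov-type comparison between solutions of \eqref{CILip} issued from nearby measures under the merely one-sided Lipschitz condition~\ref{hyp:CICL}$(iii)$: one must select, along an a priori unknown companion curve, measurable velocities matching a prescribed selection up to $L(t)\,W_1$ in the supremum distance $\dsf_{\operatorname{sup}}$ and close the estimate by a Gr\"onwall/fixed-point scheme --- a step made delicate by the non-smoothness of $W_1$ (cf.\ Appendix~\ref{App1}) and by the unavailability of the $\Pcal_p$-superdifferentiability used in \cite{ViabPp} for $p>1$. A secondary technical point is ensuring that the infimum defining $g$ is attained and that $g$ is genuinely absolutely continuous via Proposition~\ref{Propabs}, which is precisely what legitimates passing from the one-sided Dini estimate to Gr\"onwall's lemma.
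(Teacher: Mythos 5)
Your proposal follows the same overall architecture as the paper's proof: the necessity direction is essentially identical (Theorem \ref{thm:Infinitesimal}-$(b)$, equi-boundedness and equi-Lipschitzness of $V(\tau,\nu)$ from \ref{hyp:CICL}-$(ii)$, Ascoli--Arzel\`a, upgrade of $\dsf_{cc}$-convergence to $L^1(\R^d,\R^d;\nu)$-convergence, and the push-forward estimate \eqref{eq:WassEst}), and the sufficiency direction uses the same distance function $g$, the same regularity input (Propositions \ref{Propabs} and \ref{reachabscon}), a Gr\"onwall argument, and the same mesh-plus-compactness extraction at the end. The one genuine divergence is inside the pre-Gr\"onwall step. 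You run Theorem \ref{thm:Infinitesimal}-$(a)$ from the constraint-side point $\nu_t$ in the direction $\xi\in V(t,\nu_t)\cap D\Qpazo(t|\nu_t)$ and then transfer to a solution issued from $\mu_t$ via a Filippov-type continuous-dependence estimate on a whole interval, obtaining $g'\le L\,g$. The paper instead uses Hypothesis \ref{hyp:CICL}-$(iii)$ only pointwise in time, selecting $v_t\in V(t,\mu_t)$ with $\dsf_{\operatorname{sup}}(v_t,\xi_t)\le L(t)\,g(t)$, applies Theorem \ref{thm:Infinitesimal}-$(a)$ from $\mu_t$ in the direction $v_t$, and closes the estimate with the elementary bounds \eqref{eq:WassEst} (picking up the harmless extra factor and arriving at $g'\le(1+M+L)g$). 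So the ``main obstacle'' you flag --- the Filippov comparison, which you assert but do not prove --- is precisely what the paper's argument is designed to avoid; no interval-level comparison of solutions, nor any matching of measurable velocity selections along an unknown companion curve, is needed. Your route can be completed, since Filippov-type estimates under Cauchy--Lipschitz assumptions are available in \cite{ContIncPp}, but as written it rests on an external theorem that is strictly heavier than what the statement requires, whereas the paper's infinitesimal substitute is self-contained given Theorem \ref{thm:Infinitesimal}.

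Two small corrections. First, the attainment of the infimum defining $g(t)$ does not follow from compactness of $\Rpazo_{(\tau_0,t)}(\mu_0)$ plus mere closedness of $\Qpazo(t)$: in $\Pcal_1(\R^d)$ a closed set need not intersect bounded sets compactly, and this is exactly where the standing properness assumption on the images of $\Qpazo$ is used (minimising sequences in $\Qpazo(t)$ are bounded, hence precompact by properness). Second, in the final extraction step your appeal to ``graph closedness'' of $\Qpazo$ should be justified as in the paper, i.e. through the absolute continuity moduli $\Delta_{y_0,r}$ together with closedness of the images, applied along the mesh points converging to $t$; with that phrasing your conclusion $\mu^\ast(t)\in\Qpazo(t)$ is correct.
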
}

We split the proof of this theorem in two parts, starting with the easier direct implication. 


\subsection*{Proof of Theorem \ref{Thm1} $(a) \implies (b)$}
Let \( E \subseteq (0, T) \) be the set of full measure over which the statement of Theorem \ref{thm:Infinitesimal}-$(b)$ as well as Hypotheses \ref{hyp:CICL}-$(ii)$,$(iii)$ hold. Furthermore, let \( \tau \in E \) and \( \nu \in Q(\tau) \) be arbitrary, and fix a vanishing sequence $\varepsilon_n \to 0^+$ of positive numbers. Since we assumed that $\Qpazo : [0,T] \rightrightarrows \Pcal_1(\R^d)$ is viable for \eqref{CILip}, there exists a curve $\mu(\cdot) \in \Spazo_{[\tau,T]}(\nu)$ such that 
\begin{equation*}
\mu(t)\in\Qpazo(t)
\end{equation*}
for all times $t \in [\tau,T]$. By Theorem \ref{thm:Infinitesimal}-$(b)$, there exists for any $n\in\mathbb N$ a sequence $\{h^n_i\}_{i\in\mathbb N}$ of positive numbers  converging  to zero along with admissible velocities \( \{v_n\}_{n\in \N} \subseteq V(\tau, \nu)\) such that
\begin{equation*}
W_1 \Big((\Id + h^{n}_{i} v^{}_{n})_{\sharp} \nu \,; \Qpazo(\tau+h_i^n) \Big)\le W_1 \Big((\Id + h^{n}_{i} v_{n})_{\sharp} \nu ,  \mu(\tau + h^{n}_{i}) \Big) \leq \varepsilon_n h^{n}_{i}
\end{equation*}
for each $i \in \N$. Since $\tau\in E$ is such that Hypotheses \ref{hyp:CICL}-$(ii)$ hold, the sequence $\{v_n\} \subseteq C^0(\R^d,\R^d)$ is pointwise uniformly equi-bounded as well as locally  uniformly equi-Lipschitz, and hence it can be deduced  from the Ascoli-Arzel\`a theorem (see e.g. \cite[Theorem 11.28]{Rudin1987})  that there exists some \( v_{\tau} \in V(\tau, \nu) \) such that
\begin{equation*}
\dsf_{cc}(v_{n},v_{\tau}) ~\underset{n \to +\infty}{\longrightarrow}~ 0
\end{equation*}
up to a subsequence that we do not relabel. Combined with the characterisation of relative compactness in $(\Pcal_1(\R^d),W_1(\cdot,\cdot))$ provided in \eqref{eq:RelativeCompactness} above, it can be shown (see e.g. \cite[Lemma 2.15]{ContIncPp}) that this convergence further implies 
\begin{equation*}
\left\| v_{n} - v_{\tau} \right\|_{L^1(\mathbb{R}^d, \mathbb{R}^d ;\, \nu)} \underset{n \to +\infty}{\longrightarrow} 0.
\end{equation*}
For each $n\in\mathbb N$, let $m_n\in\mathbb N$ be such that $\delta_n:=h^n_{m_n}\le 1/n$. Observe then that $\delta_n\underset{n \to +\infty}{\longrightarrow} 0$ and 
\begin{equation*}
\begin{aligned}
W_1\Big((\Id+\delta_n v_{\tau})_{\sharp}\nu \, ; \Qpazo(\tau+\delta_n)\Big)& \le W_1\Big((\Id+\delta_n v_{\tau})_{\sharp}\nu,(\Id+\delta_n v_{n})_{\sharp}\nu \Big) \\
& \hspace{0.45cm} + W_1\Big((\Id+\delta_n v_{n})_{\sharp}\nu \, ; Q(\tau+\delta_n) \Big) \\
&\le \delta_n \Big(	\left\| v_{n} - v_{\tau} \right\|_{L^1(\mathbb{R}^d, \mathbb{R}^d; \, \nu)} + \varepsilon_{n} \Big).
\end{aligned}
\end{equation*}
In particular, one gets that
\begin{equation*}
\liminf_{\delta\to 0^+} \frac{1}{\delta}	W_1\Big((\Id+\delta v_\tau)_{\sharp}\nu\, ; \Qpazo(\tau+\delta)\Big) \leq \lim_{n \to +\infty} \frac{1}{\delta_n}	W_1\Big((\Id+\delta_n v_{\tau})_{\sharp}\nu \, ; \Qpazo(\tau+\delta_n)\Big) = 0.
\end{equation*}
We conclude that $v_{\tau} \in D\Qpazo(\tau|\nu)$, and the thesis follows since $v_{\tau} \in V(\tau,\nu)$ by construction. \hfill $\square$


\subsection*{Proof of Theorem \ref{Thm1} $(b) \implies (a)$}
Let $\tau\in[0,T]$ and $\mu_\tau\in\Qpazo(\tau)$ be given, and recall that $\Rpazo_{(\tau, t)}(\mu_\tau)$ denotes the reachable set of \eqref{CILip} at time $t\in[\tau,T]$. Then, define $g:[\tau,T]\to \mathbb R_+$ by 
\begin{equation*}
g(t):=\inf\Big\lbrace W_1(\mu,\nu) ~\,\textnormal{s.t.}~ \mu\in\Rpazo_{(\tau, t)}(\mu_\tau) \text{ and } \nu \in\Qpazo(t) \Big\rbrace.
\end{equation*}
It follows from Propositions \ref{Propabs} and \ref{reachabscon} that $g(\cdot)$ is absolutely continuous, and therefore differentiable $\Lcal^1$-almost everywhere in $[\tau,T]$. We will divide the proof into three steps. In Step 1, we build a specific full measure set $E\subseteq[\tau,T]$ and recall basic estimates on the reachable sets, which are then leveraged in Step 2 to show that whenever $t\in E$ is such that $g(t)>0$, then 
\begin{align}\label{pregronwall}
g'(t)\le \Big(1 + M(t) +  L(t)\Big)g(t).
\end{align}
Clearly, if $t\in (\tau,T)$ is such that $g'(t)$ exists and $g(t)=0$, then $t$ is a minimizer of $g(\cdot)$, which necessarily implies that $g'(t)=0$ and the previous inequality automatically holds. Consequently \eqref{pregronwall} holds $\Lcal^1$-almost everywhere in $[\tau,T]$, and a simple application of Gr\"onwall's lemma entails that $g(\cdot)$ must be identically zero since $g(\tau) = 0$. In Step 3, we prove via a compactness argument that this in turn implies the existence of a viable solution.

\medskip

\noindent \textbf{Step 1.} (Construction of a full measure set and preliminary estimates). Let $E_1\subseteq(\tau,T)$ be the set of differentiability points of $g:[\tau,T]\to\mathbb R$, which has full Lebesgue measure in $[\tau,T]$. Now, let  $E_2\subseteq[\tau,T]$ be the set of points $t\in[\tau,T]$ satisfying $D\Qpazo(t|\nu)\cap V(t,\nu)\neq\emptyset$ for all $\nu\in\Qpazo(t)$, which by assumption also has full Lebesgue measure in $[\tau,T]$. Let $E_3\subseteq[\tau,T]$ be the set of points $t\in[\tau,T]$ such that for every $\mu_t\in\mathcal P_1(\mathbb R^d)$, each $v_t \in V(t,\mu_t)$ and any $\varepsilon>0$, there exists a solution $\mu(\cdot) \in \Spazo_{[\tau,T]}(\mu_t)$ of \eqref{CILip} satisfying 
\begin{equation*}
W_1\Big(\mu(t+h), (\Id+h v_t)_{\sharp}\mu_t\Big) \le \varepsilon h
\end{equation*}
for all $h>0$ sufficiently small. By Theorem \ref{thm:Infinitesimal}-$(a)$, the set $E_3$ has full Lebesgue measure in $[\tau,T]$. Let finally $E_4 \subseteq [0,T]$ be the set of all $t\in[\tau,T]$ such that hypotheses \ref{hyp:CICL}-$(ii)$,$(iii)$ hold. 
Define $E:=E_1\cap E_2\cap E_3\cap E_4$ and observe that, by construction, $E$ has full Lebesgue measure in $[\tau,T]$. 
\medskip

\noindent \textbf{Step 2.} (A pre-Gr\"onwall inequality).
Let $t\in E$ be such that $g(t)>0$, and choose $\mu_t \in \Rpazo_{(\tau, t)}(\mu_\tau)$ along with $\nu_t \in \Qpazo(t)$ in such a way that
\begin{equation}
\label{eq:LipViabDist}
g(t) = W_1(\mu_t,\nu_t).
\end{equation}
This is always possible since $\Rpazo_{(\tau,t)}(\mu_{\tau})$ is compact whereas $\Qpazo(t)$ is proper, which implies that minimising sequences for $g(t)$, which are bounded by construction, lie within a compact set. Besides, under the geometric assumption \eqref{eq:PointwiseViabilityCond} posited in item $(b)$, one may choose $\xi_t \in V(t,\nu_t)\cap D\Qpazo(t|\nu_t)$. Then, owing to Hypotheses \ref{hyp:CICL}-$(iii)$, there exists  $v_t\in V(t,\mu_t)$ such that
\begin{equation}
\label{eq:LipViabSel}
\dsf_{\operatorname{sup}}(v_t,\xi_t) \le L(t) W_1(\mu_t,\nu_t).
\end{equation}
Moreover, since $\xi_t\in D\Qpazo(t|\nu_t)$, there exists a sequence $h_i \to 0^+$ of positive numbers such that 
\begin{equation*}
W_1\Big((\Id+h_{i}\xi_t)_{\sharp} \nu_t \, ; \Qpazo(t + h_{i})\Big) = o(h_{i}). 
\end{equation*}
Fix $\epsilon \in (0,g(t))$, and recall that by definition of $E \subseteq [\tau,T]$, there exists $\mu(\cdot) \in \Spazo_{[t,T]}(\mu_t)$ such that 
\begin{equation*}
W_1\Big(\mu(t + h_i), (\Id + h_i v_t)_{\sharp}\mu_t \Big)\le \varepsilon h_i\le h_i g(t) 
\end{equation*} 
for all sufficiently large $i \in \N$. Combining these two facts along with the basic estimates \eqref{eq:WassEst}, we get that
\begin{equation}
\label{eq:LipViabEst}
\begin{aligned}
g(t+h_i)&\le  W_1\Big(\mu(t+h_i),(\Id + h_i v_t)_{\sharp}\mu_t \Big) + W_1\Big((\Id + h_i v_t)_{\sharp}\mu_t \, ; \Qpazo(t+h_i)\Big)\\
&\le h_i g(t) + W_1\Big((\Id + h_i v_t)_{\sharp}\mu_t, (\Id + h_i \xi_t)_{\sharp} \nu_t \Big) + W_1\Big((\Id + h_i \xi_t)_{\sharp} \nu_t \, ; \Qpazo(t+h_i) \Big)\\
&\le h_i g(t) + W_1\Big((\Id + h_iv_t)_{\sharp}\mu_t, (\Id + h_i\xi_t)_{\sharp}\mu_t \big) + W_1\Big((\Id + h_i \xi_t)_{\sharp}\mu_t, (\Id + h_i \xi_t)_{\sharp} \nu_t \Big) + o(h_i) \\
&\le  h_i g(t) + h_i \NormL{v_t - \xi_t}{1}{\R^d,\R^d; \, \mu_t} + ~ \Lip(\Id + h_i \xi_t) W_1(\mu_t,\nu_t) + o(h_i) \\
&\le h_i g(t) + h_i \NormL{v_t - \xi_t}{1}{\R^d,\R^d; \, \mu_t} + ~ \big(1+ h_i M(t) \big) g(t) + o(h_i), 
\end{aligned}
\end{equation}
where we also leveraged Hypothesis \ref{hyp:CICL}-$(ii)$ and \eqref{eq:LipViabDist}. At this stage, observe in addition that 
\begin{equation*}
\NormL{v_t - \xi_t}{1}{\R^d,\R^d; \, \mu_t}  ~= \INTDom{|v_t(x) - \xi_t(x)|}{\R^d}{\mu_t(x)} \leq  \dsf_{\operatorname{sup}}(v_t,\xi_t).
\end{equation*}
The latter inequality combined with \eqref{eq:LipViabSel}, \eqref{eq:LipViabEst}  yield altogether that  
\begin{equation*}
g(t+h_i) \leq g(t) + h_i \Big( 1 + M(t) +  L(t) \Big) g(t) + o(h_i).
\end{equation*}
It then remains to divide by $h_i > 0$ and let $i \to +\infty$ to recover differential inequality (\ref{pregronwall}).

\medskip

\noindent \textbf{Step 3.} (Existence of a viable solution). Following Step 2 and the discussion at the beginning of the proof, we have established that 
\begin{equation*}
g'(t)\le  \Big(1 + M(t) +  L(t) \Big)g(t)
\end{equation*}
for each $t \in [\tau, T]$. By Gr\"onwall's lemma, this implies in particular that $g(t) = 0$ for all times $t \in [\tau,T]$. Moreover, since the pair $(\tau,\mu_{\tau}) \in \Graph(\Qpazo)$ that we initially fixed was arbitrary, what we have actually proven is that
\begin{equation}
\label{eq:LipNullDist}
\inf\bigg\{ W_1(\mu_{t_2},\nu_{t_2}) ~\,\textnormal{s.t.}~ \mu_{t_2} \in \Rpazo_{(t_1,t_2)}(\mu_{t_1}) ~\text{and}~ \nu_{t_2} \in \Qpazo(t_2) \bigg\} = 0
\end{equation}
for all times $0 \leq t_1 \leq t_2 \leq T$ and every $(t_1,\mu_{t_1}) \in \Graph(\Qpazo)$.  For each $n\in\mathbb N$, consider the partition $\{t_{i}\}_{k=0}^n$ of the interval $[\tau,T]$ given by 
\begin{equation*}
t_k:=\tau +\displaystyle\frac{T-\tau}{n}k.
\end{equation*}
Using inductively the identity from \eqref{eq:LipNullDist} over each subinterval $[t_k,t_{k+1}]$, one can construct for each $n\in\mathbb N$ a curve $\mu_n(\cdot) \in \Spazo_{[\tau,T]}(\mu_{\tau})$ such that 
\begin{align}\label{muuniformmesh}
\mu_n(t_k)\in \Qpazo(t_k)
\end{align} 
for every $k \in \{0,\dots,n\}$. Then, thanks to the compactness result of Proposition \ref{reachabscon}-$(a)$, one may extract a subsequence $\{\mu_{n_i}(\cdot)\}_{k\in\mathbb N}$ that converges uniformly to a limit curve $\mu(\cdot)\in\Spazo_{[\tau,T]}(\mu_{\tau})$. From this and  (\ref{muuniformmesh}), we see that $\mu(t)\in\Qpazo(t)$ for all $t\in[\tau,T]$ and conclude that $\Qpazo : [0,T] \rightrightarrows \Pcal_1(\R^d)$ is viable for \eqref{CILip} since the pair $(\tau,\mu_{\tau}) \in \Graph(\Qpazo)$ was arbitrary. \hfill $\square$


\section{Viability in the upper semicontinuous case} 
\label{section:USC}

This section is devoted to a  viability result for continuity inclusions whose right-hand side is not Lipschitz continuous in the measure variable, but merely upper semicontinuous. In what follows, we posit that $V:[0,T]\times \mathcal P_1(\mathbb R^d)\rightrightarrows C^0(\mathbb R^d,\mathbb R^d)$ complies with the following assumptions. 

\begin{taggedhyp}{\textnormal{(USC)}} 
\label{hyp:USC} \hfill
\begin{itemize}
\item[$(i)$] The set-valued map $V : [0,T] \times \Pcal_1(\R^d) \rightrightarrows C^0(\R^d,\R^d)$ has nonempty, closed and convex images, and is such that the map $t \in [0,T] \rightrightarrows V(t,\mu) \subseteq C^0(\R^d,\R^d)$ is $\Lcal^1$-measurable for each $\mu \in \Pcal_1(\R^d)$. 
\item[$(ii)$] There exists  $M(\cdot)\in L^1([0,T],\mathbb R_+)$ such that 
\begin{equation*}
|v(x)| \le M(t) \Big( 1 + |x| + \Mpazo_1(\mu) \Big) \qquad \text{and} \qquad \Lip(v ) \leq M(t)
\end{equation*}
for $\Lcal^1$-almost every $t \in [0,T]$, any $(\mu,v) \in \Graph(V(t))$ and all $x \in \R^d$.
\item[$(iii)$]  The set-valued mapping $\mu \in \mathcal P_1(\mathbb R^d) \rightrightarrows V(t,\mu) \subseteq C^0\big(\mathbb R^d,\mathbb R^d\big)$ is upper semicontinuous for $\Lcal^1$-almost every $t \in [0,T]$. 
\end{itemize}
\end{taggedhyp}

\begin{remark}[Viability for upper semicontinuous dynamics in general Wasserstein spaces]
Although in this manuscript we chose to solely work with dynamics over the $1$-Wasserstein space, the results and methods developed in the present section can be transposed verbatim to any $p$-Wasserstein space with $p \in [1,+\infty)$.   
\end{remark}

For a given pair $(\tau,\mu_{\tau}) \in [0,T] \times \mathcal P_1(\mathbb R^d)$, we study as before the set-valued Cauchy problem
\begin{equation}\label{CIUC}
\left\{
\begin{aligned}
& \partial_t \mu(t) \in - \Div_x \Big( V\big(t,\mu(t)\big)\, \mu(t) \Big),\\
& \mu(\tau) = \mu_{\tau}, 
\end{aligned}
\right.
\end{equation}
and recall that a set-valued map $\Qpazo:[0,T]\to \mathcal P_1(\mathbb R^d)$ is viable for \eqref{CIUC} if for any  $(\tau,\mu_{\tau}) \in \Graph(\Qpazo)$, there exists a solution $\mu(\cdot) \in \Spazo_{[\tau,T]}(\mu_{\tau})$ such that 
\begin{equation*}
\mu(t)\in \Qpazo(t)
\end{equation*}
for all times $t \in [\tau,T]$. 

\begin{theorem}[Viability in the upper semicontinuous framework]
\label{Thm4}
Suppose that Hypotheses \ref{hyp:USC} hold, and let $\Qpazo:[0,T]\rightrightarrows\mathcal P_1(\mathbb R^d)$ be left absolutely continuous with nonempty proper images. Suppose in addition that 
\begin{align}
\label{eq:IntegralViabilityCond}
D\Qpazo\big(\tau|\nu\big)\cap \Liminf{h\to 0^+} \,\frac{1}{h}\int_{\tau}^{\tau+h} V\big(s,\B(\nu,r)\big) \,\mathrm{d}s \neq \emptyset 
\end{align}
for $\Lcal^1$-almost every $\tau \in [0,T]$, every $\nu \in \Qpazo(\tau)$ and each $r > 0$. 
Then, the set-valued mapping $\Qpazo : [0,T] \rightrightarrows \Pcal_1(\R^d)$ is viable for \eqref{CIUC}. 
\end{theorem}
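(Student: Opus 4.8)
The plan is to follow the scheme of \cite[Section 4]{Frankowska1995}, building a maximal approximately-viable trajectory via Zorn's lemma and then passing to the limit. Fix $(\tau,\mu_\tau) \in \Graph(\Qpazo)$ and a vanishing sequence $\varepsilon_n \to 0^+$; it suffices to produce, for each $n$, a curve $\mu_n(\cdot) \in \Spazo_{[\tau,T]}(\mu_\tau)$ with $W_1\big(\mu_n(t);\Qpazo(t)\big) \le \varepsilon_n$ for all $t \in [\tau,T]$, since then Proposition \ref{reachabscon}-$(a)$ yields a uniformly convergent subsequence whose limit $\mu(\cdot) \in \Spazo_{[\tau,T]}(\mu_\tau)$ satisfies $\mu(t) \in \Qpazo(t)$ by closedness of the images. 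To construct such a $\mu_n$, I would consider the set of \emph{admissible triples} $(S, \{I_j\}_{j}, \mu)$ consisting of a time horizon $S \in [\tau,T]$, a countable family of pairwise-disjoint subintervals $\{I_j\}$ of $[\tau,S]$ covering it up to a null set, and a curve $\mu(\cdot) \in \Spazo_{[\tau,S]}(\mu_\tau)$ that stays within $\varepsilon_n$ of $\Qpazo(\cdot)$ and whose restriction to each $I_j = [a_j,b_j]$ is generated by a velocity selection adapted to condition \eqref{eq:IntegralViabilityCond} at a point of $\Qpazo(a_j)$ close to $\mu(a_j)$, with the interval lengths controlled so that the accumulated error stays below $\varepsilon_n$. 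One endows this collection with the natural pre-order (extension of the horizon, compatible with the interval decomposition and the curve), checks that chains have upper bounds — this is where countability of the interval family and the left-absolute continuity of $\Qpazo$ enter, allowing one to take the supremum horizon and concatenate — and invokes Zorn's lemma to get a maximal element.

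\textbf{The contradiction step.}
The heart of the argument is to show the maximal horizon equals $T$. Suppose $S^* < T$ for a maximal triple. Let $\nu^* \in \Qpazo(S^*)$ realise (approximately) $W_1\big(\mu(S^*);\Qpazo(S^*)\big)$; by \eqref{eq:IntegralViabilityCond} applied at $(S^*,\nu^*)$ with a suitable radius $r$ chosen larger than this distance, there is $\xi \in D\Qpazo(S^*\vert\nu^*)$ and, for each small $h>0$, an integrable selection $v_h(s) \in V\big(s,\B(\nu^*,r)\big)$ on $[S^*,S^*+h]$ with $\dsf_{cc}\big(\tfrac1h\int_{S^*}^{S^*+h} v_h(s)\,\mathrm ds,\, \xi\big) \to 0$ and, since $\xi$ is in the graphical derivative, $W_1\big((\Id + h\xi)_\sharp\nu^*;\Qpazo(S^*+h)\big) = o(h)$ along a sequence $h_i \to 0^+$. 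Using Lemma \ref{lemcc} to compare the continuity equation driven by $v_{h_i}$ to the push-forward by its time-average, plus the estimates \eqref{eq:WassEst}, one shows that extending $\mu$ past $S^*$ by a short piece driven by $v_{h_i}$ keeps the curve within $\varepsilon_n$ of $\Qpazo$ — here one must carefully absorb the contributions $W_1\big(\mu(S^*),\nu^*\big)$, the $o(h)$ term, the Lemma \ref{lemcc} quadratic term, and the $\dsf_{cc}$-to-$W_1$ passage (which requires the moment bounds of Lemma \ref{lem:Estimates} to upgrade local convergence to $L^1(\nu^*)$ convergence, as in the proof of Theorem \ref{Thm1}). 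This yields a strictly larger admissible triple, contradicting maximality, hence $S^* = T$.

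\textbf{Main obstacle.}
I expect the principal difficulty to be the verification that chains admit upper bounds, i.e. that the "limit triple" at a supremum horizon is itself admissible: one must glue countably many interval families into a single countable family, show the concatenated curve lies in the solution set $\Spazo_{[\tau,S^*]}(\mu_\tau)$ up to the limiting horizon (invoking the compactness of Proposition \ref{reachabscon}-$(a)$ on the increasing union of solution sets, or a direct selection-theoretic argument via Corollary \ref{cors} and Lemma \ref{lem:comvf}), and confirm the $\varepsilon_n$-closeness to $\Qpazo$ persists in the limit — the latter using that $t \mapsto W_1\big(\mu(t);\Qpazo(t)\big)$ is continuous on $[\tau,S^*)$ by the absolute continuity estimates of Lemma \ref{lem:Estimates} together with the left-absolute continuity of $\Qpazo$. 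A secondary subtlety is the bookkeeping needed to ensure that each newly appended interval can be taken short enough that the total error remains summably controlled while still covering $[\tau,S^*]$ up to measure zero; this is a transfinite-induction-flavoured argument that the pre-order must be designed to support, and getting the definition of "admissible triple" exactly right so that both Zorn's lemma applies and the contradiction step goes through is the delicate engineering of the proof.
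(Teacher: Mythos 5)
Your overall blueprint (admissible triples, Zorn's lemma, extension-by-contradiction, limit passage) matches the paper's, but the scheme as you set it up has a genuine gap that makes it internally inconsistent under Hypotheses \ref{hyp:USC}. You require each approximate curve to be an exact solution, $\mu_n(\cdot)\in\Spazo_{[\tau,T]}(\mu_\tau)$, and you invoke Proposition \ref{reachabscon}-$(a)$ to extract a uniformly convergent subsequence. Neither is available here: Proposition \ref{reachabscon} (nonemptiness and compactness of the solution set) is proved under the Lipschitz condition $(ii)$, which is precisely what is dropped in the upper semicontinuous setting — indeed, producing even one exact solution of \eqref{CIUC} is part of what the theorem must deliver, so assuming the $\mu_n$ are exact solutions begs the question. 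Worse, your own extension step drives the curve with a selection $v_h(s)\in V\big(s,\B(\nu^*,r)\big)$, i.e.\ with velocities from the \emph{enlarged} set rather than from $V(s,\mu(s))$, so the extended curve does not belong to $\Spazo_{[\tau,S]}(\mu_\tau)$ and violates your definition of admissible triple. The paper resolves this by never asking the approximants to solve \eqref{CIUC}: the triples in $\Fpazo_\varepsilon$ solve the relaxed inclusion with right-hand side $V\big(t,\B(\mu(t),r_T\varepsilon)\big)$, and only on a set $\Opazo_n$ with $\Lcal^1([0,T]\setminus\Opazo_n)\le 1/n$; compactness of the approximating sequence is then obtained by hand (a diagonal argument over a countable dense set of times, using properness of the images $\Qpazo(t)$ and the uniform equicontinuity of Lemma \ref{techlem}-$(b)$), and the fact that the limit velocity satisfies $v(t)\in V(t,\mu(t))$ is recovered only at the very end via the weak $L^1$-compactness of Lemma \ref{lem:comvf} and the Castaing--Valadier closure principle (Proposition \ref{prop:CastaingValadier}) — a step your proposal leaves unaddressed.

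A second concrete gap is the treatment of the exceptional times. Condition \eqref{eq:IntegralViabilityCond} holds only for $\Lcal^1$-a.e.\ $\tau$, and the estimate of Lemma \ref{lemcc} is only useful at Lebesgue points of $M(\cdot)$; your contradiction step applies the viability condition directly at the maximal horizon $S^*$, which may lie in the exceptional null set (and need not be a Lebesgue point of $M$). The paper's construction anticipates this by fixing compact sets $C_\varepsilon$ of good points with $\Lcal^1([0,T]\setminus C_\varepsilon)\le\varepsilon$ and by allowing a second type of interval, on which the curve is not driven by the dynamics at all but is a quasi-geodesic moving toward $\Qpazo$ at speed $M_{\Qpazo}$, exploiting the left absolute continuity of the constraints; the maximality argument then splits into the two cases $\tau\in C_\varepsilon$ and $\tau\notin C_\varepsilon$ (Propositions \ref{prop:GlobalCeps} and \ref{prop:GlobalNotCeps}). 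Without such a mechanism your extension step can stall at a bad time, so the maximal horizon need not reach $T$. Your identification of the chain/upper-bound verification as the main obstacle is therefore misplaced: in the paper that step is comparatively routine (Cauchy-type estimates from Proposition \ref{prop:Estimates}-$(b)$ give the limit at $\tau^-$), while the real work lies in the two-regime design of the triples, the extension at bad times, and the final closure argument identifying the limit velocity.
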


\begin{remark}[Concerning the integral viability condition \eqref{eq:IntegralViabilityCond}]
It should be noted that the viability condition featured above is more stringent than its pointwise counterpart \eqref{eq:PointwiseViabilityCond} from Theorem \ref{Thm1}. Indeed, in the former, one needs to integrate the admissible velocities in time and to enlarge the set of input measures to make up for the lack of regularity of $V : [0,T] \times\Pcal_1(\R^d) \rightrightarrows C^0(\R^d,\R^d)$ in the measure variable. In the context of Euclidean spaces, it was showin in \cite{Frankowska1995} that \eqref{eq:IntegralViabilityCond} is actually equivalent to \eqref{eq:PointwiseViabilityCond} when the dynamics is Carathéodory, but the underlying arguments do not seem to carry immediately to Wasserstein spaces, see also the discussion in Remark \ref{rmk:Hyp} below.     
\end{remark}

The proof of Theorem \ref{Thm4} is postponed to Section \ref{ssusc} and split into two main parts. In the first one, we construct a suitable sequence of approximate solutions of the dynamics whose distance to the constraints is properly controlled, while in the second one, we identify a converging subsequence whose limit satisfies both the desired continuity inclusion and state constraints. The construction of these approximate solutions is subtle and quite intricate, relying on Zorn's lemma, and is detailed across several points in Section \ref{fas}. 


\subsection{A family of approximate solutions}\label{fas}
Without loss of generality, we fix some $\mu_0\in \Qpazo(0)$ as the starting point of the dynamics and define the constant 
\begin{equation}
\label{eq:cTDef}
c_T := 2\bigg( \Mpazo_1(\mu_0) + \INTSeg{M(s)}{t}{0}{T} \bigg) \exp \bigg( 2 \INTSeg{M(s)}{t}{0}{T} \bigg).
\end{equation}
We then let
\begin{align}\label{r}
r_T:= {4(1 + c_T)} \bigg( 1 + \int_{0}^{T} M(s)\,\mathrm{d}s \bigg) + T \exp\bigg(\int_{0}^T M(s)\,\mathrm{d}s\bigg),
\end{align}
and denote by $M_{\Qpazo}(\cdot)\in L^1([0,T],\mathbb R_+)$ any function that is not identically equal to zero, for which
\begin{align}\label{eq:MT}
\Delta_{\mu_0,r_T}(\Qpazo(s),\Qpazo(t)) \le \int_{s}^t M_{\Qpazo}(\theta)\,\mathrm{d}\theta
\end{align}
for all times $0 \leq s\leq t \leq T$. We fix in addition some $\varepsilon_0\in(0,1)$ {such that $r_T \epsilon_0 \leq 1$ and
\begin{align}\label{absint}
\int_{A} M_{\Qpazo}(s)\,\mathrm{d}s\le \Mpazo_1(\mu_0) \qquad \text{whenever} \qquad \Lcal^1(A) \le \varepsilon_0,
\end{align}}
which is always possible by the absolute continuity of the Lebesgue integral, see e.g. \cite[Theorem 2.5.7]{BogachevI}. Throughout this subsection, we suppose that $\{C_{\varepsilon}\}_{\varepsilon\in(0,\varepsilon_0)}\subseteq[0,T]$ is a family of compact sets whose elements are Lebesgue points of $M(\cdot) \in L^1([0,T],\R_+)$ at which \eqref{eq:IntegralViabilityCond} hold, and which comply with the conditions
\begin{equation}
\label{eq:Ceps}
C_{\varepsilon_2}\subseteq C_{\varepsilon_1} \quad \text{for }\varepsilon_1\le\varepsilon_2\qquad\text{and}\qquad \Lcal^1([0,T]\setminus C_\varepsilon) \le \varepsilon 
\end{equation}
for all $\varepsilon \in(0,\varepsilon_0)$.


\paragraph*{An ordered family of curves.}
\label{sub_sec_aof}
In what follows, we consider triples of the form $(\tau, \{[a_i,b_i)\}_{i\in\Lambda}, \mu(\cdot))$ where $\tau\in(0,T]$ is given, $\{[a_i,b_i)\}_{i\in\Lambda}$ is a collection of intervals with $a_i<b_i$ and $\mu : [0,\tau] \to \Pcal_1(\mathbb R^d)$. For each $\varepsilon\in(0,\varepsilon_0)$, we denote by $\Fpazo_{\varepsilon}$ be the collection of all such triples satisfying the following properties. 

\begin{taggedprop}{\textnormal{(PT)}} \hfill
\label{hyp:PT}
\begin{itemize}
\item[$(i)$] The index set $\Lambda$ is at most  countable  and the family $\{[a_i,b_i)\}_{i\in\Lambda}$ consists of nonempty pairwise disjoint intervals such that 
\begin{equation*}
b_i - a_i \le \varepsilon \quad \text{for each $i \in \Lambda$} \qquad \text{and}\qquad \bigcup_{i\in \Lambda}[a_i,b_i) = [0,\tau).
\end{equation*}
\item[$(ii)$] The curve $\mu:[0,\tau]\to\mathcal P_1(\mathbb R^d)$ is absolutely continuous with $\mu(0)=\mu_0$, {and satisfies 
\begin{equation*}
\Mpazo_1(\mu(t)) \leq 2 \bigg( \Mpazo_1(\mu_0) + \INTSeg{M(s)}{s}{0}{t} \bigg) \exp \bigg( 2 \INTSeg{M(s)}{s}{0}{t} \bigg)
\end{equation*}}
as well as 
\begin{equation*}
\left\{
\begin{aligned}
& W_1(\mu_0,\mu(t))\le {2(1 + c_T)} \bigg( 1 + \int_{0}^{t}M(s)\,\mathrm{d}s \bigg), \\
& W_1\big(\mu(b_i) \, ;\Qpazo(b_i)\big) \le b_i\exp\bigg(\int_{0}^{b_i}M(s)\,\mathrm{d}s\bigg) \varepsilon,
\end{aligned}
\right.
\end{equation*}
for all times $t \in [0,\tau]$ and each $i \in \Lambda$. 
\item[$(iii)$]  For each $i\in\Lambda$ such that $a_i\in C_\varepsilon$, the curve $\mu : [a_i,b_i) \to \Pcal_1(\R^d)$ solves the continuity inclusion
\begin{equation*}
\partial_t \mu(t) \in - \Div_x \Big( V \big(t,\mathbb B (\mu(t), r_T\varepsilon)\big) \mu(t)\Big), 
\end{equation*}
namely there exists an integrable selection $t \in [a_i,b_i] \mapsto v_i(t) \in V\big(t,\B(\mu(t),r_T \epsilon)\big)$ such that
\begin{equation*}
\int_0^T \int_{\mathbb R^d}\Big( \partial_t\varphi_i(t,x) + \langle \nabla_x \varphi_i(t,x) , v_i(t,x) \rangle\Big) \mathrm{d}\mu_i(t)(x)\,\mathrm{d}t = 0
\end{equation*}
for every $\varphi_i \in C_c^\infty((a_i,b_i) \times\mathbb R^d)$. {In addition, it holds that 
\begin{equation*}
W_1(\mu(s),\mu(t)) \leq 2(1+c_T)\int_s^t M(\theta)\,\mathrm{d}\theta
\end{equation*}
for all times $a_i \leq s \leq t < b_i$.}
\item[$(iv)$] For each $i\in\Lambda$ such that $a_i\notin C_\varepsilon$, the interval $[a_i,b_i)$ is contained in $ [0,T]\setminus C_\varepsilon$ and {the curve $\mu : [a_i,b_i) \to \Pcal_1(\R^d)$ satisfies}
\begin{equation*}
W_1(\mu(s),\mu(t)) \le \int_s^t M_{\Qpazo}(\theta)\,\mathrm{d}\theta
\end{equation*}
for all times $a_i\le s\le t\le b_i$.
\end{itemize}
\end{taggedprop}

At this stage, we consider over $\Fpazo_\varepsilon$ the binary relation $\preceq$ given by \smallskip
\begin{equation}
\label{eq:PartialOrder1}
\big(\tau_1, \{[a^1_i,b^1_i)\}_{i\in\Lambda_1}, \mu_1(\cdot)\big)\preceq 	\big(\tau_2, \{[a^2_i,b^2_i)\}_{i\in\Lambda_2}, \mu_2(\cdot)\big)
\end{equation}
if and only if 
\begin{equation}
\label{eq:PartialOrder2}
\tau_1\le \tau_2,\quad \Lambda_1\subseteq \Lambda_2,\quad [a^1_i,b^1_i)=[a^2_i,b^2_i) ~~ \text{for each $i\in\Lambda_1$} \quad \text{and} \quad \mu_2(t) = \mu_1(t) ~~ \text{for all $t\in[0,\tau_1]$}.
\end{equation}
In what follows, we show that the family $\Fpazo_{\epsilon}$ is nonempty under our working assumptions, and that $\preceq$ defines a \textit{partial order}, namely a reflexive, anti-symmetric and transitive binary relation.

\begin{proposition}[Nontriviality and partial ordering]
\label{prop:nonemptyness}
Let $\varepsilon\in(0,\varepsilon_0)$ be given and $C_{\epsilon} \subseteq [0,T]$ be as above. Then, the family $\Fpazo_\varepsilon$ is nonempty and the relation $\preceq$ defines a partial order.
\end{proposition}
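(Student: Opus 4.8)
The plan is to establish the two claims separately: first the nonemptiness of $\Fpazo_\varepsilon$, then the three axioms of a partial order for $\preceq$. The ordering claim is the quick part, so I would likely dispatch it first. Reflexivity of $\preceq$ is immediate from $\tau \le \tau$, $\Lambda \subseteq \Lambda$, the trivial equality of intervals, and $\mu = \mu$ on $[0,\tau]$. Antisymmetry: if $(\tau_1,\dots) \preceq (\tau_2,\dots)$ and $(\tau_2,\dots)\preceq(\tau_1,\dots)$, then $\tau_1 \le \tau_2 \le \tau_1$ forces $\tau_1 = \tau_2$; similarly $\Lambda_1 \subseteq \Lambda_2 \subseteq \Lambda_1$ gives $\Lambda_1 = \Lambda_2$, whence the interval families coincide (they are indexed by the same set and agree on $\Lambda_1$), and $\mu_1 = \mu_2$ on $[0,\tau_1] = [0,\tau_2]$, so the triples are equal. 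Transitivity: chaining the inequalities $\tau_1 \le \tau_2 \le \tau_3$, the inclusions $\Lambda_1 \subseteq \Lambda_2 \subseteq \Lambda_3$, the interval identities (if $i \in \Lambda_1$ then $[a^1_i,b^1_i) = [a^2_i,b^2_i) = [a^3_i,b^3_i)$ since $\Lambda_1 \subseteq \Lambda_2$), and the curve restrictions ($\mu_3 = \mu_2$ on $[0,\tau_2] \supseteq [0,\tau_1]$, and $\mu_2 = \mu_1$ on $[0,\tau_1]$, hence $\mu_3 = \mu_1$ on $[0,\tau_1]$) yields $(\tau_1,\dots)\preceq(\tau_3,\dots)$.

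For nonemptiness, the idea is to construct an explicit ``short-time'' element of $\Fpazo_\varepsilon$. I distinguish two cases according to whether $0 \in C_\varepsilon$. If $0 \notin C_\varepsilon$, then since $C_\varepsilon$ is compact (hence closed) there is $\delta \in (0,\varepsilon]$ with $[0,\delta) \subseteq [0,T]\setminus C_\varepsilon$; I would set $\tau := \delta$, take the single interval $\{[0,\delta)\}$ (so $\Lambda$ is a singleton), and define $\mu$ on $[0,\delta]$ by a constant curve $\mu(t) := \mu_0$. This trivially satisfies Properties \ref{hyp:PT}-$(iv)$ (since $W_1(\mu(s),\mu(t)) = 0 \le \int_s^t M_{\Qpazo}$), the moment and distance bounds in $(ii)$ hold because $\Mpazo_1(\mu(t)) = \Mpazo_1(\mu_0)$ and $W_1(\mu_0,\mu(t)) = 0$, and the constraint-distance bound in $(ii)$ at $b_1 = \delta$ — wait, one must check $W_1(\mu(\delta)\,;\Qpazo(\delta)) \le \delta \exp(\int_0^\delta M)\,\varepsilon$: this follows from left-absolute continuity of $\Qpazo$, namely $W_1(\mu_0,\Qpazo(\delta)) = W_1(\mu_0\,;\Qpazo(\delta)) \le \Delta_{\mu_0,r_T}(\Qpazo(0),\Qpazo(\delta)) \le \int_0^\delta M_{\Qpazo} \le \Mpazo_1(\mu_0)$ by \eqref{absint}; if this crude bound is not enough, shrink $\delta$ further using absolute continuity of the Lebesgue integral so that $\int_0^\delta M_{\Qpazo} \le \delta\,\varepsilon$, which is possible since $M_{\Qpazo} \in L^1$. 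If $0 \in C_\varepsilon$, then $0$ is by assumption a Lebesgue point of $M(\cdot)$ at which \eqref{eq:IntegralViabilityCond} holds with $\nu = \mu_0$ and $r = r_T\varepsilon$; I would use the selection-theoretic reformulation of \eqref{eq:IntegralViabilityCond} (the existence of integrable selections $v_h$ of $s \mapsto V(s,\B(\mu_0,r_T\varepsilon))$ with the stated $\liminf$ property) together with Corollary \ref{cors} to produce, for a small $h = \delta \le \varepsilon$, a genuine solution $\mu$ of the continuity inclusion $\partial_t\mu \in -\Div_x(V(t,\B(\mu(t),r_T\varepsilon))\mu(t))$ on $[0,\delta]$ emanating from $\mu_0$; one must verify the moment/distance bounds in $(ii)$–$(iii)$ via the estimates \eqref{eq:BasicEstimates} and Grönwall (exactly the bound $\Mpazo_1(\mu(t)) \le 2(\Mpazo_1(\mu_0) + \int_0^t M)\exp(2\int_0^t M)$ comes out), and the constraint-distance bound $W_1(\mu(\delta)\,;\Qpazo(\delta)) \le \delta\exp(\int_0^\delta M)\varepsilon$ is precisely what \eqref{eq:IntegralViabilityCond} together with Lemma \ref{lemcc} is designed to give (after possibly shrinking $\delta$ so that the $\liminf$ is realized within the required tolerance).

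The main obstacle I anticipate is the second case of nonemptiness: carefully threading the selection-theoretic content of the $\Liminf$ set in \eqref{eq:IntegralViabilityCond} through Lemma \ref{lemcc} and Corollary \ref{cors} to simultaneously obtain (a) a bona fide solution of the continuity inclusion with the enlarged right-hand side $V(t,\B(\mu(t),r_T\varepsilon))$ — not merely $V(t,\B(\mu_0,r_T\varepsilon))$, which requires noting that $W_1(\mu(t),\mu_0)$ stays below $r_T\varepsilon$ on a short enough interval, itself a consequence of the distance estimate and a shrinking argument — and (b) all the quantitative bounds of Properties \ref{hyp:PT}-$(ii)$,$(iii)$ holding on $[0,\delta]$. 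The bookkeeping is delicate because the constants $c_T$, $r_T$, $M_{\Qpazo}$ and $\varepsilon_0$ were fixed precisely so that these estimates close up, so the proof is mostly a matter of verifying that the short-time construction respects each inequality; I would organize it as a sequence of ``choose $\delta$ small enough that …'' reductions, each invoking absolute continuity of the Lebesgue integral or the defining $\liminf$.
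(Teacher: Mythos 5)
Your handling of the partial-order axioms is fine (the paper indeed dismisses this as straightforward), and your Case 2 ($0\in C_\varepsilon$) sketch follows the paper's actual route (viability condition at $(0,\mu_0)$, selections $v_h$, Lemma \ref{lemcc}, Gr\"onwall), modulo one parameter slip: you invoke \eqref{eq:IntegralViabilityCond} with radius $r=r_T\varepsilon$, but then the inclusion $V(t,\B(\mu_0,r_T\varepsilon))\subseteq V(t,\B(\mu(t),r_T\varepsilon))$ cannot hold for $t>0$; the paper takes the smaller radius $r=\varepsilon$ and then proves $W_1(\mu_0,\mu(t))\le (r_T-1)\varepsilon$, so that $\B(\mu_0,\varepsilon)\subseteq\B(\mu(t),r_T\varepsilon)$. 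That is fixable, but your Case 1 ($0\notin C_\varepsilon$) contains a genuine gap. Taking the constant curve $\mu(t)\equiv\mu_0$ on $[0,\delta)$, Property \ref{hyp:PT}-$(ii)$ demands the feasibility estimate $W_1(\mu(\delta)\,;\Qpazo(\delta))\le \delta\exp\big(\int_0^\delta M\big)\varepsilon$ at the endpoint $b_1=\delta$, and all you can say about the constant curve is $W_1(\mu_0\,;\Qpazo(\delta))\le\int_0^\delta M_{\Qpazo}(s)\,\mathrm{d}s$. Your proposed repair -- ``shrink $\delta$ using absolute continuity of the Lebesgue integral so that $\int_0^\delta M_{\Qpazo}\le\delta\varepsilon$'' -- does not work: absolute continuity makes $\int_0^\delta M_{\Qpazo}$ small, but the required inequality is about the \emph{average} $\frac{1}{\delta}\int_0^\delta M_{\Qpazo}\le\varepsilon$, and this average does not tend to $0$ as $\delta\to0^+$ (at a Lebesgue point it tends to $M_{\Qpazo}(0)$, which may well exceed $\varepsilon$, and it can even blow up otherwise). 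So for a generic left-absolutely continuous $\Qpazo$ the constant curve simply is not admissible.

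The paper avoids this by not staying at $\mu_0$: it picks a nearest point $\mu_\tau\in\Qpazo(\tau)$ to $\mu_0$, an optimal plan $\gamma\in\Gamma_o(\mu_0,\mu_\tau)$, and defines $\mu(t)$ as the displacement interpolation $\big(\pi^1+\tfrac{\int_0^t M_{\Qpazo}}{\int_0^\tau M_{\Qpazo}}(\pi^2-\pi^1)\big)_{\sharp}\gamma$, reparametrised by $M_{\Qpazo}$. This makes $\mu(\tau)=\mu_\tau\in\Qpazo(\tau)$, so the endpoint feasibility estimate in \ref{hyp:PT}-$(ii)$ holds trivially, while the left absolute continuity of $\Qpazo$ (through $\Delta_{\mu_0,r_T}$ and \eqref{absint}) gives exactly the modulus $W_1(\mu(s),\mu(t))\le\int_s^t M_{\Qpazo}$ needed for \ref{hyp:PT}-$(iv)$ and the remaining bounds in $(ii)$. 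You should replace your constant-curve construction by such an interpolating curve (or some other curve landing in $\Qpazo(\tau)$ at time $\tau$ with $W_1$-speed dominated by $M_{\Qpazo}$); with that change, and with the radius in Case 2 corrected as above, your plan matches the paper's proof.
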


\begin{proof}
We only prove that $\Fpazo_\varepsilon$ is nonempty, since checking that $\preceq$ defines a partial order is quite straightforward. The rest of the proof is split into two cases, depending on whether $0$ belongs to the set $C_\varepsilon \subseteq[0,T]$ or not.

\medskip

\noindent \textbf{Case 1 $(0\notin C_\varepsilon)$.} 
Since $C_\varepsilon \subseteq[0,T]$ is closed, we can find  $\tau\in(0,T]$ such that $[0, \tau)\subseteq [0,T]\setminus C_\varepsilon$. Let then $\mu_{\tau}\in \Qpazo(\tau)$ be such that $W_1(\mu_0 , \mu_\tau) = W_1(\mu_0 \, ; \Qpazo(\tau))$, {and observe that
\begin{equation*}
\begin{aligned}
W_1(\mu_0,\mu_\tau) & \le \Delta_{\mu_0,r_T}(\Qpazo(0),\Qpazo(\tau)) \\
& \leq \INTSeg{M_{\Qpazo}(s)}{s}{0}{\tau} \\
& \leq 2(1+c_T) \bigg( 1 + \INTSeg{M(s)}{s}{0}{\tau} \bigg)
\end{aligned}
\end{equation*}
by \eqref{eq:MT} and \eqref{absint} combined with the left absolute continuity of $\Qpazo : [0,T] \rightrightarrows \Pcal_1(\R^d)$ and the definition \eqref{eq:HausdorffSemiDist} of the one-sided Hausdorff semidistance}. Fix now some $\gamma\in\Gamma_{o}\big(\mu_0,\mu_\tau\big)$, and let
\begin{equation*}
\mu(t):= \bigg(\pi^1 + \displaystyle\frac{\int_{0}^t M_{\Qpazo}(\theta)\,\mathrm{d}\theta}{\int_{0}^\tau M_{\Qpazo}(\theta)\,\mathrm{d}\theta} \big(\pi^2 - \pi^1 \big) \bigg)_{\raisebox{4pt}{\hspace{-0.05cm}$\scriptstyle{\sharp}$}} \, \gamma
\end{equation*}
for all times $t \in [0,\tau]$. {It is then straightforward to check that the triple $(\tau, \{[0,\tau)\}, \mu(\cdot))$ satisfies \ref{hyp:PT}-$(i)$ with $[a_1,b_1) = [0,\tau)$ and $\Lambda=\{1\}$}. {Regarding \ref{hyp:PT}-$(iv)$, one may simply note that 
\begin{equation*}
\begin{aligned}
W_{1}(\mu(s),\mu(t)) & \le \bigg( \frac{\int_{s}^tM_{\Qpazo}(\theta)\,\mathrm{d}\theta}{\int_{0}^\tau M_{\Qpazo}(\theta)\,\mathrm{d}\theta} \bigg) \| \pi^2-\pi^1\|_{L^1(\R^{2d},\R^d; \gamma)} \\
& = \bigg( \frac{\int_{s}^tM_{\Qpazo}(\theta)\,\mathrm{d}\theta}{\int_{0}^\tau M_{\Qpazo}(\theta)\,\mathrm{d}\theta} \bigg) W_{1}(\mu_0,\mu_\tau)\\
& \le \bigg( \frac{\int_{s}^tM_{\Qpazo}(\theta)\,\mathrm{d}\theta}{\int_{0}^\tau M_{\Qpazo}(\theta)\,\mathrm{d}\theta} \bigg) \Delta_{\mu_0,r_T}(\Qpazo(0),\Qpazo(\tau)) \\
& \le  \int_{s}^tM_{\Qpazo}(\theta)\,\mathrm{d}\theta
\end{aligned}
\end{equation*}
for all times $0 \leq s\le t \leq \tau$. Lastly, to check \ref{hyp:PT}-$(ii)$, observe first that the curve $\mu : [0,\tau] \to \Pcal_1(\R^d)$ is continuous and such that $\mu(0)=\mu_0$ and $\mu(\tau)=\mu_\tau \in \Qpazo(\tau)$ by construction. Moreover, by what precedes, it holds that
\begin{equation*}
\begin{aligned}
W_1(\mu_0,\mu(t)) & \leq \INTSeg{M_{\Qpazo}(s)}{s}{0}{t} \leq \Mpazo_1(\mu_0),
\end{aligned}
\end{equation*}
for all times $t \in [0,\tau]$, from which we may infer all the remaining estimates from \ref{hyp:PT}-$(ii)$}. Recalling that $0 \notin C_{\epsilon}$, we have shown that the triple $(\tau,  \{[0,\tau)\}, \mu(\cdot))$ belongs to $\Fpazo_\varepsilon$. 

\medskip

\noindent \textbf{Case 2 $(0\in C_\varepsilon)$.}  Let $\delta \in (0,1)$ be such that
\begin{equation*}
{\INTSeg{M(s)}{s}{0}{\delta} \leq \frac{\epsilon}{2 \exp \Big( \INTSeg{M(t)}{t}{0}{T} \Big)}}.
\end{equation*}
Let $c>0$ be the constant given as in Lemma \ref{lemcc} which only depends on the magnitudes of $\Mpazo_1(\mu_0)$ and $\|M(\cdot)\|_{L^1([0,T],\R_+)}$, and take some $h_1\in(0,\delta)$ so that 
\begin{align}\label{kem}
\bigg(\frac{1}{h} \int_{0}^{h}M(s)\,\mathrm{d}s\bigg) \bigg( \int_{0}^{h}M(s)\,\mathrm{d}s \bigg) \leq \frac{\varepsilon}{3 c}
\end{align}
for all $h \in (0,h_1)$. Note that this is possible because $0$ is a Lebesgue point of $M(\cdot) \in L^1([0,T],\R_+)$, according to the definition of $C_\varepsilon \subseteq [0,T]$. By the viability condition \eqref{eq:IntegralViabilityCond}, there exists $w\in C^0(\mathbb R^d,\mathbb R^d)$ such that
\begin{equation}
\label{eq:ViabCondProof} 
w\in D\Qpazo(0|\mu_0) \cap \Liminf{h\to 0^+} \,\frac{1}{h}\int_{0}^{h} V\big(s,\B(\mu_0,\varepsilon)\big) \,\mathrm{d}s.
\end{equation}
In particular, owing to the definition \eqref{eq:GraphicalDef} of graphical derivative, there exists a vanishing sequence $h_i\to0^+$ as $i \to +\infty$, such that 
\begin{equation}
\label{eq:ViabCondProofBis}
\lim_{i \to +\infty} \frac{1}{h_i} W_1 \Big( (\Id+h_i w)_{\sharp}\mu_0 \,; \Qpazo(h_i) \Big) = 0.
\end{equation}
Besides, it also stems from \eqref{eq:ViabCondProof} that there exists a sequence $\{v_i(\cdot)\}_{i\in\mathbb N}$ of integrable selections $t \in [0,T] \mapsto v_i(t) \in V(t,\B(\mu_0,\epsilon))$ such that 
\begin{align*}
\dsf_{cc} \Big( \tfrac{1}{h_i}\mathsmaller{\INTSeg{v_i(s)}{s}{0}{h_i}} , w \Big) ~\underset{i \to +\infty}{\longrightarrow}~ 0.
\end{align*}
It follows then from Hypothesis \ref{hyp:USC}-$(ii)$ combined, e.g., with \cite[Lemma 2.15]{ViabPp} that 
\begin{align*}
\Big\| \, w - \tfrac{1}{h_i} \mathsmaller{\INTSeg{v_i(s)}{s}{0}{h_i}} \, \Big\|_{L^1(\R^d,\R^d;\,\mu_0)} ~\underset{i \to +\infty}{\longrightarrow}~ 0, 
\end{align*}
wherefore we can find $i_1\in\mathbb N$ such that
\begin{align*}
W_1\bigg( \Big(\Id + \mathsmaller{\int_{0}^{h_i} v_i(s)\,\mathrm{d}s} \Big)_{\raisebox{2pt}{\hspace{-0.05cm}$\scriptstyle{\sharp}$}} \,\mu_0,  (\Id + h_i w)_{\sharp} \mu_0 \bigg) \le \frac{\varepsilon}{3} h_i
\end{align*}
for each $i \geq i_1$. Consider lastly $i_0\in\mathbb N$ with $i_0\ge i_1$ and $h_{i_0}\in (0,h_{i_1})$ such that 
\begin{align*}
W_1\Big( (\Id + h_{i_0} w)_{\sharp}\mu_0 \, ; \Qpazo(h_{i_0})\Big)\le \frac{\varepsilon}{3} h_{i_0}, 
\end{align*}
whose existence are ensured by \eqref{eq:ViabCondProofBis}. Then, upon combining the two previous estimates, we  get  
\begin{align}\label{l1}
W_1\bigg( \Big(\Id + \mathsmaller{\int_{0}^{h_{i_0}} v_{i_0}(s)\,\mathrm{d}s} \Big)_{\raisebox{2pt}{\hspace{-0.05cm}$\scriptstyle{\sharp}$}} \,\mu_0 \, ; \Qpazo(h_{i_0})\Big)\le \frac{2}{3}\varepsilon h_{i_0}.  
\end{align} 
Observe now that by Lemma \ref{lemcc}, the Cauchy problem
\begin{equation*}
\left\{ 
\begin{aligned}
& \partial_t \mu(t) + \Div_x(v_{i_0}(t) \mu(t)) = 0, \\
& \mu(0) = \mu_0, 
\end{aligned}
\right.
\end{equation*}
admits a unique solution $\mu(\cdot)\in \AC([0,T],\Pcal_1(\R^d))$ which satisfies
\begin{align}\label{l2}
W_1 \bigg( \mu(h_{i_0}) , \Big(\Id + \mathsmaller{\int_{0}^{h_{i_0}} v_{i_0}(s)\,\mathrm{d}s} \Big)_{\raisebox{2pt}{\hspace{-0.05cm}$\scriptstyle{\sharp}$}} \, \mu_0 \bigg) \le  c \bigg(\frac{1}{h_{i_0}}\int_{0}^{h_{i_0}}M(s)\,\mathrm{d}s\bigg)\bigg(\int_{0}^{h_{i_0}}M(s)\,\mathrm{d}s\bigg) h_{i_0}. 
\end{align}
Upon combining the estimates of (\ref{kem}), (\ref{l1}) and (\ref{l2}), we finally obtain that 
\begin{align}\label{l33}
W_1( \mu(h_{i_0}) \, ; \Qpazo(h_{i_0})) \le \varepsilon h_{i_0}. 
\end{align}
Set now $\tau:=h_{i_0}$, define $v: t \in [0,\tau] \mapsto v_{i_0}(t) \in V(t,\B(\mu_0,\epsilon))$, and let $\mu:[0,\tau]\to\mathcal P_{1}(\mathbb R^d)$ be as above. Note that owing to the basic moment estimate from \eqref{eq:BasicEstimates} above combined with Hypothesis \ref{hyp:USC}-$(ii)$, one has that
\begin{equation*}
\begin{aligned}
\Mpazo_1(\mu(t)) & \leq \Mpazo_1(\mu_0) + \INTSeg{\NormL{v(s)}{1}{\R^d,\R^d;\, \mu(s)}}{s}{0}{t} \\
& \leq \Mpazo_1(\mu_0) + \INTSeg{M(s) \bigg( 1 + \Mpazo_1(\mu(s)) + \sup \Big\{ \Mpazo_1(\nu) ~\,\textnormal{s.t.}~ \nu \in \B(\mu_0,\epsilon) \Big\} \bigg) }{s}{0}{t} \\
& \leq \Mpazo_1(\mu_0) + \INTSeg{M(s) \Big( 2 + \Mpazo_1(\mu(s)) + \Mpazo_1(\mu_0) \Big)}{s}{0}{t}
\end{aligned}
\end{equation*}
where we used the fact that $\epsilon \leq 1/r_T < 1$. By Gr\"onwall's lemma, this yields the crude bound
\begin{equation*}
\Mpazo_1(\mu(t)) \leq \Bigg( c_T \bigg( 1 + \INTSeg{M(t)}{t}{0}{T} \bigg) + 2 \INTSeg{M(t)}{t}{0}{T} \Bigg) \exp \bigg( \INTSeg{M(t)}{t}{0}{T} \bigg)
\end{equation*}
for all times $t \in [0,\tau]$. Then, upon plugging the latter quantity in the absolute continuity estimate of \eqref{eq:BasicEstimates} while recalling our earlier choice of $\delta \in (0,1)$, we further obtain 
\begin{equation*}
\begin{aligned}
W_1(\mu_0,\mu(t)) & \leq \INTSeg{M(s) \Big( 2 + \Mpazo_1(\mu(s)) + \Mpazo_1(\mu_0) \Big)}{s}{0}{t} \\
& \leq (r_T - 1)\epsilon,
\end{aligned}
\end{equation*}
from whence we may directly infer that
\begin{align}\label{l4}
v(t)\in V\big(t,\B(\mu_0,\epsilon)\big) \subseteq V\big(t,\mathbb B(\mu(t), r_T \epsilon)\big)
\end{align}
for $\Lcal^1$-almost every $t\in[0,\tau]$. At this stage, recalling that $r_T \epsilon \leq 1$, it follows from \eqref{l4} combined again with the moment estimate from \eqref{eq:BasicEstimates} that the following refined bound
\begin{equation}
\label{eq:MomentEstProofInit}
\begin{aligned}
\Mpazo_1(\mu(t)) & \leq \Mpazo_1(\mu_0) + \INTSeg{M(s) \bigg( 1 + \Mpazo_1(\mu(s)) + \sup \Big\{ \Mpazo_1(\nu) ~\,\textnormal{s.t.}~ \nu \in \B(\mu(s),r_T\epsilon) \Big\} \bigg) }{s}{0}{t} \\
& \leq \Mpazo_1(\mu_0) + 2 \INTSeg{M(s) \Big( 1 + \Mpazo_1(\mu(s)) \Big)}{s}{0}{t}
\end{aligned}
\end{equation}
holds for all times $t \in [0,\tau]$, so that
\begin{equation}
\label{eq:InitialisationMomentBound}
\Mpazo_1(\mu(t)) \leq 2 \bigg( \Mpazo_1(\mu_0) + \INTSeg{M(s)}{s}{0}{t} \bigg) \exp \bigg( 2 \INTSeg{M(s)}{s}{0}{t} \bigg)
\end{equation}
by Gr\"onwall's lemma. This combined with the absolute continuity estimate in \eqref{eq:BasicEstimates} and the definition \eqref{eq:cTDef} of the constant $c_T > 0$ further yields 
\begin{equation*}
\begin{aligned}
W_1(\mu(s),\mu(t)) \leq \INTSeg{\NormL{v(\theta)}{1}{\R^d,\R^d ; \, \mu(\theta)}}{\theta}{s}{t} \leq 2(1+c_T) \INTSeg{M(\theta)}{\theta}{s}{t}. 
\end{aligned}
\end{equation*}
As a result of which, we have shown that $(\tau,\{[0,\tau)\},\mu(\cdot))$ satisfies \ref{hyp:PT}-$(i)$,$(iii)$ with $[a_1,b_1)=[0,\tau)$ and $\Lambda=\{1\}$. Concerning \ref{hyp:PT}-$(ii)$, observe that the moment inequality has already been established in \eqref{eq:InitialisationMomentBound}, whereas 
\begin{equation*}
\begin{aligned}
W_1(\mu_0,\mu(t)) & \leq 2(1+c_T) \INTSeg{M(s)}{s}{0}{t} \\
& \leq 2(1+c_T) \bigg( 1 + \INTSeg{M(s)}{s}{0}{t} \bigg)
\end{aligned}
\end{equation*}
as a direct consequence of previous computations. Lastly, the fact that 
\begin{equation*}
W_1(\mu(\tau) \, ; \Qpazo(\tau)) \leq \tau \exp \bigg( \INTSeg{M(s)}{s}{0}{\tau} \bigg) \epsilon
\end{equation*}
follows straightforwardly from \eqref{l33} since $\tau = h_{i_0}$. As we assumed that $0 \in C_{\epsilon}$, we have indeed established that triple $(\tau,\{[0,\tau)\},\mu(\cdot))$ then belongs to $\Fpazo_\varepsilon$. 
\end{proof}

In the next proposition, we establish useful properties on the elements of the family $\{\Fpazo_\varepsilon\}_{\varepsilon\in(0,\varepsilon_0)}$. Therein and throughout what follows, we let
\begin{equation*}
E:=\bigcup_{a_i\in C_\varepsilon}[a_i,b_i) \qquad \text{and} \qquad F:=\bigcup_{a_i\notin C_\varepsilon}[a_i,b_i)    
\end{equation*}
and note that $E \cup F = [0,\tau)$.

\begin{proposition}[Regularity and feasibility estimates for admissible triples]
\label{prop:Estimates}
Let $\varepsilon\in(0,\varepsilon_0)$ and $(\tau, \{[a_i,b_i)\}_{i\in\Lambda}, \mu(\cdot))\in\Fpazo_\varepsilon$. Then, the following holds.  
\begin{itemize}
\item[$(a)$] {For all times $t \in [0,\tau]$, one has that
\begin{equation*}
\Mpazo_1(\mu(t)) \leq \Mpazo_1(\mu_0) + 2 \INTDom{M(s) \Big( 1 + \Mpazo_1(\mu(s)) \Big)}{[0,t] \cap E}{\theta} + \INTDom{M_{\Qpazo}(s)}{[0,t] \cap F}{\theta}.
\end{equation*}}
\item[$(b)$] For all times $0 \leq s \leq t \leq \tau$, one has that
\begin{equation*}
W_1(\mu(s),\mu(t)) \le {2(1+c_T)} \int_{[s,t] \cap E}M(\theta)\,\mathrm{d}\theta + \int_{[s,t] \cap F}M_{\Qpazo}(\theta)\,\mathrm{d}\theta,
\end{equation*}
and in particular $\mu(\cdot) \in \AC([0,\tau],\Pcal_1(\R^d))$. 
\item[$(c)$] For each $i\in\Lambda$, one has that 
\begin{equation*}
W_1(\mu(a_i) \, ; \Qpazo(a_i)) \le \displaystyle a_i\exp\bigg(\int_{0}^{a_i}M(s)\,\mathrm{d}s \bigg) \varepsilon.
\end{equation*}
\end{itemize}
\end{proposition}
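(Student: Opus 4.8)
The three bounds will be assembled from the local estimates built into Properties \ref{hyp:PT}, using two elementary structural facts: the endpoint set $N := \{a_i, b_i : i \in \Lambda\}$ is at most countable, hence $\Lcal^1$-negligible, and $\mu : [0,\tau] \to \Pcal_1(\R^d)$ is absolutely continuous with $\Mpazo_1(\mu(t)) \le c_T$ for every $t \in [0,\tau]$, both of which are recorded in \ref{hyp:PT}-$(ii)$ together with the definition \eqref{eq:cTDef} of $c_T$. I will also invoke that an absolutely continuous curve in $(\Pcal_1(\R^d), W_1)$ admits a metric derivative $m(\cdot) \in L^1([0,\tau],\R_+)$ with $W_1(\mu(s),\mu(t)) \le \int_s^t m(\theta)\,\mathrm{d}\theta$ for $s \le t$ (see \cite[Theorem 1.1.2]{AGS}), and that $\phi := \Mpazo_1 \circ \mu$ is absolutely continuous since $|\phi(t) - \phi(s)| \le W_1(\mu(s),\mu(t))$.

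\emph{Step 1 (local differential bounds).} For $\Lcal^1$-almost every $\theta \in E$, the point $\theta$ lies in the interior of the $E$-interval $[a_i,b_i)$ containing it (the endpoints being negligible), where $\mu$ solves the continuity equation driven by a selection $v_i(\theta) \in V(\theta, \B(\mu(\theta), r_T\varepsilon))$. Since $r_T\varepsilon < r_T\varepsilon_0 \le 1$, any $\nu \in \B(\mu(\theta),r_T\varepsilon)$ satisfies $\Mpazo_1(\nu) \le \Mpazo_1(\mu(\theta)) + 1$, so Hypothesis \ref{hyp:USC}-$(ii)$ gives $\NormL{v_i(\theta)}{1}{\R^d,\R^d;\,\mu(\theta)} \le 2M(\theta)(1+\phi(\theta))$. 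Feeding this into the basic estimates \eqref{eq:BasicEstimates} shows that $s \mapsto \phi(s) - \int_{a_i}^s 2M(\theta)(1+\phi(\theta))\,\mathrm{d}\theta$ is non-increasing on $[a_i,b_i)$, hence $\phi'(\theta) \le 2M(\theta)(1+\phi(\theta))$ for a.e.\ such $\theta$; and \ref{hyp:PT}-$(iii)$ directly yields $m(\theta) \le 2(1+c_T)M(\theta)$ for a.e.\ $\theta \in E$. Symmetrically, \ref{hyp:PT}-$(iv)$ gives $m(\theta) \le M_{\Qpazo}(\theta)$ and $|\phi'(\theta)| \le M_{\Qpazo}(\theta)$ for a.e.\ $\theta \in F$.

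\emph{Step 2 (items $(a)$ and $(b)$).} Since $[0,\tau) = E \cup F$ and $N$ is $\Lcal^1$-null, we integrate the pointwise bounds of Step 1. For $(a)$, writing $\phi(t) = \phi(0) + \int_0^t \phi'(\theta)\,\mathrm{d}\theta$ with $\phi(0) = \Mpazo_1(\mu_0)$ and splitting the integral over $[0,t]\cap E$ and $[0,t]\cap F$ produces the stated inequality. For $(b)$, the bound $W_1(\mu(s),\mu(t)) \le \int_s^t m(\theta)\,\mathrm{d}\theta$ split over $[s,t]\cap E$ and $[s,t]\cap F$ gives the claimed estimate; absolute continuity of $\mu$ on $[0,\tau]$ is already part of \ref{hyp:PT}-$(ii)$. (Alternatively, $(b)$ follows without metric derivatives from a short continuity-of-$W_1(\mu(s),\cdot)$ argument that propagates the local estimates across the intervals, but the above is the most economical.)

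\emph{Step 3 (item $(c)$).} Fix $i \in \Lambda$. If $a_i = 0$ the estimate is trivial since $\mu(0) = \mu_0 \in \Qpazo(0)$. If $a_i > 0$, then for every large $n$ the point $a_i - \tfrac1n$ belongs to some interval $[a_{k_n},b_{k_n})$ with $k_n \neq i$; by pairwise disjointness $a_i \notin [a_{k_n},b_{k_n})$, so $a_i - \tfrac1n < b_{k_n} \le a_i$ and hence $b_{k_n} \to a_i^-$. Choose $\nu_n \in \Qpazo(b_{k_n})$ realising $W_1(\mu(b_{k_n}); \Qpazo(b_{k_n})) \le b_{k_n}\exp(\int_0^{b_{k_n}}M(s)\,\mathrm{d}s)\,\varepsilon$, which exists because $\Qpazo(b_{k_n})$ is proper. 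Combining the moment and feasibility estimates of \ref{hyp:PT}-$(ii)$ with $\varepsilon < 1$ and the definition \eqref{r} of $r_T$ yields $W_1(\mu_0, \nu_n) \le 2(1+c_T)(1 + \int_0^T M(s)\,\mathrm{d}s) + T\exp(\int_0^T M(s)\,\mathrm{d}s) \le r_T$, so $\nu_n \in \B(\mu_0, r_T)$; left absolute continuity of $\Qpazo$ through \eqref{eq:MT} then gives $W_1(\nu_n; \Qpazo(a_i)) \le \Delta_{\mu_0,r_T}(\Qpazo(b_{k_n}), \Qpazo(a_i)) \le \int_{b_{k_n}}^{a_i} M_{\Qpazo}(\theta)\,\mathrm{d}\theta$. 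The triangle inequality yields
\[
W_1\big(\mu(a_i); \Qpazo(a_i)\big) \le W_1\big(\mu(a_i), \mu(b_{k_n})\big) + b_{k_n}\exp\!\Big(\textstyle\int_0^{b_{k_n}} M(s)\,\mathrm{d}s\Big)\varepsilon + \int_{b_{k_n}}^{a_i} M_{\Qpazo}(\theta)\,\mathrm{d}\theta,
\]
and letting $n \to +\infty$, using the continuity of $\mu$, the continuity of $t \mapsto t\exp(\int_0^t M(s)\,\mathrm{d}s)$, and the absolute continuity of the Lebesgue integral, produces the desired bound.

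The main obstacle is Step 3: left endpoints $a_i$ need not themselves be right endpoints of any interval of the family, so no feasibility estimate from \ref{hyp:PT}-$(ii)$ applies directly at $a_i$, which forces the limiting procedure above; and the verification that each $\nu_n$ remains inside $\B(\mu_0, r_T)$ — which is precisely why the radius in \eqref{r} is inflated by the factors $4(1+c_T)$ and $T\exp(\int_0^T M)$ — must be tracked carefully, since the one-sided semidistance $\Delta_{\mu_0,r_T}$ only controls points of $\Qpazo(b_{k_n})$ lying in that ball.
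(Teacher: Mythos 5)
Your proof is correct. For items $(a)$ and $(b)$ you take a somewhat different route from the paper: you turn the local estimates encoded in \ref{hyp:PT}-$(iii)$,$(iv)$ (together with \eqref{eq:BasicEstimates} and Hypothesis \ref{hyp:USC}-$(ii)$, whose use to get $\NormL{v_i(\theta)}{1}{\R^d,\R^d;\,\mu(\theta)} \le 2M(\theta)(1+\Mpazo_1(\mu(\theta)))$ via $r_T\varepsilon\le 1$ is exactly the paper's computation) into almost-everywhere differential inequalities for the metric derivative of $\mu(\cdot)$ and for $\phi:=\Mpazo_1\circ\mu$, and then integrate over $[0,t]\cap E$ and $[0,t]\cap F$, exploiting the a priori absolute continuity granted by \ref{hyp:PT}-$(ii)$ and the negligibility of the countable endpoint set. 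The paper instead keeps the integral form of the local estimates on each interval $[a_i,b_i)$ and assembles them by telescoping sums and repeated triangle inequalities over the countable family $\Lambda$; this avoids invoking metric derivatives (your appeal to \cite[Theorem 1.1.2]{AGS}) at the cost of a rearrangement of an absolutely convergent series of increments. The two bookkeeping schemes are equivalent and rest on the same local ingredients. For item $(c)$ your argument is essentially the paper's: approximate $a_i$ from the left by right endpoints $b_{k_n}$, apply the feasibility estimate of \ref{hyp:PT}-$(ii)$ at $b_{k_n}$ and the left absolute continuity of $\Qpazo$ through $\Delta_{\mu_0,r_T}$ and \eqref{eq:MT}, then pass to the limit using the continuity of $\mu(\cdot)$. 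Two small bonuses in your write-up: the explicit verification that $\nu_n\in\B(\mu_0,r_T)$, which is genuinely needed for $\Delta_{\mu_0,r_T}$ to control $W_1(\nu_n\,;\Qpazo(a_i))$ and is precisely what the calibration \eqref{r} of $r_T$ is for (the paper uses this implicitly without spelling it out), and your unified construction of $b_{k_n}\to a_i^-$, which subsumes the paper's separately treated case $a_i=b_j$.
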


\begin{proof}
{We begin by proving item $(a)$. To this end, note that if $i \in \Lambda$ is such that $a_i \in C_{\epsilon}$, it then follows by repeating the computations of \eqref{eq:MomentEstProofInit} above that 
\begin{equation*}
\begin{aligned}
\Mpazo_1(\mu(t)) \leq \Mpazo_1(\mu(a_i)) + 2\INTSeg{M(\theta) \Big( 1 + \Mpazo_1(\mu(\theta))\Big)}{\theta}{a_i}{t}
\end{aligned}
\end{equation*}
for all times $t \in [a_i,b_i)$. In the situation in which $a_i \notin C_{\epsilon}$, we straightforwardly get that
\begin{equation*}
\begin{aligned}
\Mpazo_1(\mu(t)) & \leq \Mpazo_1(\mu(a_i)) + W_1(\mu(a_i),\mu(t)) \\
& \leq \Mpazo_1(\mu(a_i)) + \INTSeg{M_{\Qpazo}(\theta)}{\theta}{a_i}{t}.
\end{aligned}
\end{equation*}
Consequently, given any $t \in [0,\tau]$ and denoting by $i_t \in \Lambda$ the index for which $t \in [a_{i_t},b_{i_t})$, while letting $\Lambda_t \subseteq \Lambda$ be the subfamily for which  
\begin{equation*}
\bigcup_{i \in \Lambda_t} [a_i,b_i) = [0,a_{i_t}),
\end{equation*}
we infer from what precedes that
\begin{equation*}
\begin{aligned}
\Mpazo_1(\mu(t)) & \leq \Mpazo_1(\mu_0) + \sum_{i \in \Lambda_t} \Big( \Mpazo_1(\mu(b_i)) - \Mpazo_1(\mu(a_i)) \Big) + W_1(\mu(a_{i_t}),\mu(t)) \\
& \leq \Mpazo_1(\mu_0) + 2 \sum_{\substack{i\in\Lambda_t \\ a_i\in C_\varepsilon}} \hspace{-0.1cm} \INTSeg{ \hspace{-0.025cm} M(\theta) \Big( 1 + \Mpazo_1(\mu(\theta)) \Big)}{\theta}{a_i}{b_i} + \sum_{\substack{i\in\Lambda_t \\ a_i\notin C_\varepsilon}} \hspace{-0.1cm} \INTSeg{\hspace{-0.025cm} M_{\Qpazo}(s)}{s}{a_i}{b_i} + W_1(\mu(a_{i_t}),\mu(t)) \\
&  \leq \Mpazo_1(\mu_0) + 2 \INTDom{ \hspace{-0.05cm} M(s)\Big( 1+ \Mpazo_1(\mu(s)) \Big)}{[0,a_{i_t}] \cap E}{s} + \INTDom{\hspace{-0.05cm} M_{\Qpazo}(s)}{[0,a_{i_t}] \cap F}{s} + W_1(\mu(a_{i_t}),\mu(t)) \\
& \leq \Mpazo_1(\mu_0) + 2 \INTDom{M(s)\Big( 1+ \Mpazo_1(\mu(s)) \Big)}{[0,t] \cap E}{s} + \INTDom{M_{\Qpazo}(s)}{[0,t] \cap F}{s}
\end{aligned}
\end{equation*}
where we again used the basic estimates from \eqref{eq:BasicEstimates}}. 

We now move on to item $(b)$, and start by fixing $0 \leq s \leq t \leq \tau$ along with a pair of indices $i_{s},i_{t}\in \Lambda$ such that $s\in[a_{i_s},b_{i_s})$ and $t\in[a_{i_t},b_{i_t})$. Denoting by $\Lambda_{s,t}\subseteq \Lambda$ the subfamily for which 
\begin{equation*}
\bigcup_{i\in \Lambda_{s,t}}[a_{i},b_i) = [b_{i_s},a_{i_{t}}),
\end{equation*}
we obtain by repeatedly applying the triangle inequality that 
\begin{equation*}
\begin{aligned}
& W_1(\mu(s),\mu(t)) \\
& \hspace{0.6cm} \le W_1(\mu(s),\mu(b_{i_s})) + \sum_{i\in\Lambda_{s,t}}W_1(\mu(a_i),\mu(b_i)) + W_1(\mu(a_{i_t}),\mu(t))\\
& \hspace{0.6cm}= W_1(\mu(s),\mu(b_{i_s})) + \sum_{\substack{i\in\Lambda_{s,t} \\ a_i\in C_\varepsilon}}W_1(\mu(a_i),\mu(b_i)) + \sum_{\substack{i\in\Lambda_{s,t} \\ a_i\notin C_\varepsilon}}W_1(\mu(a_i),\mu(b_i)) + W_1(\mu(a_{i_t}),\mu(t))\\
& \hspace{0.6cm} \le W_1(\mu(s),\mu(b_{i_s})\big) + {2(1+c_T)} \sum_{\substack{i\in\Lambda_{s,t} \\ a_i\in C_\varepsilon}} \int_{a_i}^{b_i} M(\theta)\,\mathrm{d}\theta + \sum_{\substack{i\in\Lambda_{s,t} \\ a_i\notin C_\varepsilon}}\int_{a_i}^{b_i} M_{\Qpazo}(\theta)\,\mathrm{d}\theta + W_1(\mu(a_{i_t}),\mu(t))\\
& \hspace{0.6cm} \le W_1(\mu(s),\mu(b_{i_s})) + {2(1+c_T)} \int_{[b_{i_s},a_{i_t}]\cap E} \hspace{-0.1cm} M(\theta)\,\mathrm{d}\theta +  \int_{[b_{i_s},a_{i_t}]\cap F} \hspace{-0.1cm} M_{\Qpazo}(\theta)\,\mathrm{d}\theta+ W_1(\mu(a_{i_t}),\mu(t)) \\
& \hspace{0.6cm} \le {2(1+c_T)} \int_{[s,t]\cap E} \hspace{-0.1cm} M(\theta)\,\mathrm{d}\theta +  \int_{[s,t]\cap F} \hspace{-0.1cm} M_{\Qpazo}(\theta)\,\mathrm{d}\theta,
\end{aligned}
\end{equation*}
which is the desired estimate. 

We conclude by proving item $(c)$. Fix some $i\in\Lambda$, and observe that if $a_i=0$ or $a_i=b_j$ for some $j\in\Lambda$, the result follows directly from the property \ref{hyp:PT}-$(ii)$ satisfied by the triple $(\tau, \{[a_i,b_i)\}_{i\in\Lambda}, \mu(\cdot))$. Otherwise, by \ref{hyp:PT}-$(i)$ above, there exists a sequence $\{i_{k}\}_{k\in\mathbb N}\subseteq\Lambda$ such that $b_{i_k}\to a_i^-$ as $k\to+\infty$. For each $k\in\mathbb N$, choose $\nu_k\in \Qpazo(b_{i_k})$ such that $W_1(\mu(b_{i_k}), \nu_k) = W_1(\mu(b_{i_k}) \,; \Qpazo(b_{i_k}))$, and note that 
\begin{equation*}
\begin{aligned}
& W_{1}\big(\mu(a_i) \,; \Qpazo(a_i)\big) \\
& \hspace{0.6cm} \le W_1(\mu(a_i),\mu(b_{i_k})) + W_1(\mu(b_{i_k}),\nu_{k}) + W_{1}(\nu_k \, ; \Qpazo(a_i)) \\
& \hspace{0.6cm} \le W_1(\mu(a_i),\mu(b_{i_k})) + W_1(\mu(b_{i_k}) \, ; \Qpazo(b_{i_k}))  + \Delta_{\mu_0,r_T}(\Qpazo(b_{i_k}), \Qpazo(a_i)) \\
&\hspace{0.6cm} \le \int_{b_{i_k}}^{a_i} \max\Big\{{2(1+c_T)}M(s),M_{\Qpazo}(s)\Big\}\,\mathrm{d}s + b_{i_{k}}\exp\bigg(\int_{0}^{b_{i_k}} M(s)\,\mathrm{d}s\bigg)\varepsilon + \int_{b_{i_k}}^{a_i} M_{\Qpazo}(s)\,\mathrm{d}s.
\end{aligned}
\end{equation*}
Taking the limit as $k\to +\infty$ then yields the desired result. 
\end{proof}

\begin{remark}[Feasibility estimate at time $\tau$]
Similar arguments as those from the previous proof yield additionally that $W_1(\mu(\tau) \,; \Qpazo(\tau))\le \tau\exp\big(\int_{0}^\tau M(s)\,\mathrm{d}s\big)\varepsilon$ for any triple $(\tau,\{[a_i,b_i)\}_{i \in \Lambda},\mu(\cdot)) \in \Fpazo_{\epsilon}$. 
\end{remark}

\begin{corollary}[Global feasibility estimate for admissible triples]
\label{corclsvia}
For any $\varepsilon\in(0,\varepsilon_0)$ and each $(\tau, \{[a_i,b_i)\}_{i\in\Lambda}, \mu(\cdot))\in\Fpazo_\varepsilon$, it holds that
\begin{equation*}
W_1(\mu(t) \, ; \Qpazo(t)) \le 2\int_{t-\varepsilon}^t\max\Big\{{2(1+c_T)}M(s),M_{\Qpazo}(s)\Big\}\,\mathrm{d}s + t\exp\bigg(\int_{0}^t M(s)\,\mathrm{d}s\bigg) \varepsilon 
\end{equation*}
for all times $t\in[\varepsilon,\tau]$.
\end{corollary}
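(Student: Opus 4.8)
The plan is to distinguish whether $t=\tau$ or $t<\tau$, and in the latter case to propagate the feasibility bound from the left endpoint of the interval of the triple that contains $t$ by a single triangle inequality. I first fix $\varepsilon\in(0,\varepsilon_0)$, a triple $(\tau,\{[a_i,b_i)\}_{i\in\Lambda},\mu(\cdot))\in\Fpazo_\varepsilon$, and $t\in[\varepsilon,\tau]$. When $t=\tau$, the estimate is immediate from the feasibility bound $W_1(\mu(\tau)\,;\Qpazo(\tau))\le\tau\exp\!\big(\int_0^\tau M(s)\,\mathrm{d}s\big)\varepsilon$ recorded in the remark just above, since the integral term on the right-hand side of the claimed inequality is nonnegative. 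So I assume $t<\tau$ and invoke property \ref{hyp:PT}-$(i)$ to select the unique index $i_t\in\Lambda$ with $t\in[a_{i_t},b_{i_t})$; as $b_{i_t}-a_{i_t}\le\varepsilon$, this yields the localisation $t-\varepsilon\le a_{i_t}\le t$, hence $[a_{i_t},t]\subseteq[t-\varepsilon,t]\subseteq[0,T]$.

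Next I transfer feasibility from $a_{i_t}$ to $t$. Using that $\Qpazo(a_{i_t})$ is proper and closed, I pick $\nu_{i_t}\in\Qpazo(a_{i_t})$ realising $W_1(\mu(a_{i_t})\,;\Qpazo(a_{i_t}))$, which Proposition \ref{prop:Estimates}-$(c)$ bounds by $a_{i_t}\exp\!\big(\int_0^{a_{i_t}}M(s)\,\mathrm{d}s\big)\varepsilon$. The key bookkeeping step, which I expect to be the only genuine obstacle, is to check that $\nu_{i_t}$ lies in the ball $\B(\mu_0,r_T)$ over which the left absolute continuity of $\Qpazo$ is quantified: this follows from the triangle inequality together with the bound $W_1(\mu_0,\mu(a_{i_t}))\le 2(1+c_T)\big(1+\int_0^{a_{i_t}}M(s)\,\mathrm{d}s\big)$ from \ref{hyp:PT}-$(ii)$, the feasibility bound above, and $\varepsilon<\varepsilon_0<1$, all compared against the explicit definition \eqref{r} of $r_T$ (the coefficient $4$ there, versus the $2$ from the distance-to-$\mu_0$ bound, is precisely what leaves room for the two contributions, while the $\varepsilon<1$ absorbs the feasibility term $T\exp(\int_0^T M)\varepsilon$). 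Once $\nu_{i_t}\in\Qpazo(a_{i_t})\cap\B(\mu_0,r_T)$ is in hand, the left absolute continuity estimate \eqref{eq:MT} and $a_{i_t}\le t$ give $W_1(\nu_{i_t}\,;\Qpazo(t))\le\Delta_{\mu_0,r_T}(\Qpazo(a_{i_t}),\Qpazo(t))\le\int_{a_{i_t}}^t M_{\Qpazo}(s)\,\mathrm{d}s$, and properness and closedness of $\Qpazo(t)$ then produce $\nu_t\in\Qpazo(t)$ with $W_1(\nu_{i_t},\nu_t)$ bounded by that same integral.

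Finally I run the triangle inequality $W_1(\mu(t)\,;\Qpazo(t))\le W_1(\mu(t),\nu_t)\le W_1(\mu(t),\mu(a_{i_t}))+W_1(\mu(a_{i_t}),\nu_{i_t})+W_1(\nu_{i_t},\nu_t)$ and estimate each summand. For the first, Proposition \ref{prop:Estimates}-$(b)$ applied on $[a_{i_t},t]$ gives $2(1+c_T)\int_{[a_{i_t},t]\cap E}M+\int_{[a_{i_t},t]\cap F}M_{\Qpazo}$; since $E$ and $F$ partition $[0,\tau)$, majorising both integrands by $\max\{2(1+c_T)M,M_{\Qpazo}\}$ and using $[a_{i_t},t]\subseteq[t-\varepsilon,t]$ turns this into $\int_{t-\varepsilon}^t\max\{2(1+c_T)M(s),M_{\Qpazo}(s)\}\,\mathrm{d}s$. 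The third summand is bounded by the same integral because $M_{\Qpazo}\le\max\{2(1+c_T)M,M_{\Qpazo}\}$ and $a_{i_t}\ge t-\varepsilon$, while the middle summand is bounded by $t\exp\!\big(\int_0^t M(s)\,\mathrm{d}s\big)\varepsilon$ using $a_{i_t}\le t$ and the monotonicity of the exponential. Adding the two integral contributions yields the factor $2$ and exactly the asserted bound; the case $t=\tau$ settled at the outset disposes of the only time not covered by an interval of the triple.
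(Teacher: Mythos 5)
Your proposal is correct and follows essentially the same route as the paper: the triangle inequality through $\mu(a_{i_t})$ and a nearest point of $\Qpazo(a_{i_t})$, with the three terms controlled by Proposition \ref{prop:Estimates}-$(b)$, Proposition \ref{prop:Estimates}-$(c)$, and the left absolute continuity bound \eqref{eq:MT}, using $a_{i_t}\ge t-\varepsilon$ to merge the two integrals into the factor $2$. The only differences are cosmetic refinements the paper leaves implicit — the explicit check that the chosen $\nu_{i_t}$ lies in $\B(\mu_0,r_T)$ so that $\Delta_{\mu_0,r_T}$ applies, and the separate treatment of $t=\tau$ via the preceding remark — both of which are sound.
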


\begin{proof}
Fix $t\in[\varepsilon,\tau]$, consider some $i\in\Lambda$ such that $t\in [a_i,b_i)$, and let $\nu\in \Qpazo(a_i)$ be such that $W_1(\mu(a_i), \nu) = W_1(\mu(a_i) \, ; \Qpazo(a_i))$. Then, from Proposition \ref{prop:Estimates}-$(c)$ above, we get that
\begin{equation*}
\begin{aligned}
& W_1(\mu(t) \, ;\Qpazo(t)) \\
& \hspace{0.6cm} \le W_1(\mu(t), \mu(a_i)) + W_1(\mu(a_i), \nu\big) + 	W_1(\nu \,; \Qpazo(t))\\
&\hspace{0.6cm} \le  \int_{a_i}^t\max\Big\{2(1+c_T) M(s),M_{\Qpazo}(s)\Big\}\,\mathrm{d}s  +  a_i \exp\bigg(\int_{0}^{a_i} M(s)\,\mathrm{d}s\bigg)\varepsilon +  \Delta_{\mu_0,r_T}\big(\Qpazo(a_i) \, ; \Qpazo(t)\big)\\
&\hspace{0.6cm} \le \int_{t-\varepsilon}^t \max\Big\{ 2(1+c_T)M(s),M_{\Qpazo}(s)\Big\}\,\mathrm{d}s  +  t \exp\bigg(\int_{0}^{t}M(s)\,\mathrm{d}s\bigg)\varepsilon + \int_{t-\varepsilon}^t M_{\Qpazo}(s)\,\mathrm{d}s, 
\end{aligned}
\end{equation*}
from whence the thesis follows. 
\end{proof}


\paragraph*{Maximal elements are defined over the whole interval.}

In this second part, we show that maximal elements in the chain $(\Fpazo_\varepsilon, \preceq)$, whenever they exist, are defined over the whole interval $[0,T]$. To do so, we need to distinguish two cases depending on whether $\tau$ belongs to $C_{\epsilon} \subseteq [0,T]$ or not. In both scenarios, we show that a candidate maximal triple $(\tau,\{[a_i,b_i)\}_{i \in \Lambda},\mu(\cdot))$ for which $\tau < T$ can always be extended, thereby leading to a contradiction. 

\begin{proposition}[Maximal elements are globally defined when $\tau \notin C_{\epsilon}$]
\label{prop:GlobalNotCeps}
For every $\varepsilon\in(0,\varepsilon_0)$, if the triple $(\tau,\{[a_i,b_i)\}_{i\in\Lambda},\mu(\cdot))$ is a maximal element in $(\Fpazo_\varepsilon, \preceq)$ and $\tau\notin C_\varepsilon$, then $\tau = T$.
\end{proposition}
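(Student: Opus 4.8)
The plan is to argue by contradiction: suppose that $\tau<T$ and construct an element of $\Fpazo_\varepsilon$ that strictly dominates the given maximal triple $(\tau,\{[a_i,b_i)\}_{i\in\Lambda},\mu(\cdot))$. Since $C_\varepsilon\subseteq[0,T]$ is compact and $\tau\notin C_\varepsilon$, the number $\delta_1:=\dist(\tau,C_\varepsilon)$ is positive (with the convention $\delta_1=+\infty$ when $C_\varepsilon=\emptyset$), so one may fix $\tau':=\tau+\min\{\varepsilon,\,T-\tau,\,\delta_1\}\in(\tau,T]$ and note that $\tau'-\tau\le\varepsilon$ while $[\tau,\tau')\subseteq[0,T]\setminus C_\varepsilon$. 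The goal is then to extend $\mu:[0,\tau]\to\Pcal_1(\R^d)$ to an absolutely continuous curve $\tilde\mu:[0,\tau']\to\Pcal_1(\R^d)$ agreeing with $\mu$ on $[0,\tau]$, in such a way that the augmented triple $\big(\tau',\{[a_i,b_i)\}_{i\in\Lambda}\cup\{[\tau,\tau')\},\tilde\mu(\cdot)\big)$ lies in $\Fpazo_\varepsilon$, the new interval $[\tau,\tau')$ being of type \ref{hyp:PT}-$(iv)$. Since $\tau'>\tau$, this triple strictly dominates the original one for $\preceq$, contradicting maximality and forcing $\tau=T$.

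The construction of $\tilde\mu$ on $[\tau,\tau']$ runs as follows. Let $d_\tau:=W_1(\mu(\tau)\,;\Qpazo(\tau))$, attained at some $\nu_\tau\in\Qpazo(\tau)$ because $\Qpazo(\tau)$ is proper (a bounded minimising sequence stays in a compact set). I record two facts: first $d_\tau\le\tau\exp\!\big(\int_0^\tau M(s)\,\mathrm{d}s\big)\varepsilon$, by the feasibility estimate of \ref{hyp:PT}-$(ii)$ (or, if $\tau$ is not of the form $b_i$, by the limiting argument used in the proof of Proposition \ref{prop:Estimates}-$(c)$ and recorded in the Remark following it); second, using \ref{hyp:PT}-$(ii)$, $\varepsilon<1$ and the definition \eqref{r} of $r_T$, $W_1(\mu_0,\nu_\tau)\le W_1(\mu_0,\mu(\tau))+d_\tau\le 2(1+c_T)\big(1+\int_0^T M(s)\,\mathrm{d}s\big)+T\exp\!\big(\int_0^T M(s)\,\mathrm{d}s\big)\le r_T$, hence $\nu_\tau\in\mathbb B(\mu_0,r_T)$. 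Therefore \eqref{eq:MT} gives $\dist\!\big(\nu_\tau\,;\Qpazo(\tau')\big)\le\Delta_{\mu_0,r_T}\big(\Qpazo(\tau),\Qpazo(\tau')\big)\le\int_\tau^{\tau'}M_{\Qpazo}(\theta)\,\mathrm{d}\theta$, so that $D:=W_1\big(\mu(\tau)\,;\Qpazo(\tau')\big)\le d_\tau+\int_\tau^{\tau'}M_{\Qpazo}(\theta)\,\mathrm{d}\theta$. Choosing $\nu_{\tau'}\in\Qpazo(\tau')$ attaining $D$ (properness again) and $\gamma\in\Gamma_o(\mu(\tau),\nu_{\tau'})$, set $\rho(\sigma):=\big((1-\tfrac{\sigma}{D})\pi^1+\tfrac{\sigma}{D}\pi^2\big)_\sharp\gamma$ for $\sigma\in[0,D]$ — a $W_1$-geodesic from $\mu(\tau)$ to $\nu_{\tau'}$, read as the constant curve $\mu(\tau)$ when $D=0$ — which satisfies $W_1(\rho(\sigma_1),\rho(\sigma_2))\le|\sigma_1-\sigma_2|$, and finally define
\[
\tilde\mu(t):=\mu(t)\ \text{for}\ t\in[0,\tau],\qquad \tilde\mu(t):=\rho\!\Big(\min\Big\{\textstyle\int_\tau^t M_{\Qpazo}(\theta)\,\mathrm{d}\theta,\ D\Big\}\Big)\ \text{for}\ t\in[\tau,\tau'].
\]
In words, on $[\tau,\tau']$ we travel along the geodesic towards the nearest point of $\Qpazo(\tau')$ at ``speed'' $M_{\Qpazo}$, and stop once that point is reached. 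By construction $\tilde\mu$ is absolutely continuous, agrees with $\mu$ on $[0,\tau]$, and $W_1(\tilde\mu(s),\tilde\mu(t))\le\int_s^t M_{\Qpazo}(\theta)\,\mathrm{d}\theta$ for $\tau\le s\le t\le\tau'$, which is precisely \ref{hyp:PT}-$(iv)$ for the interval $[\tau,\tau')\subseteq[0,T]\setminus C_\varepsilon$.

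It then remains to verify \ref{hyp:PT}-$(ii)$ for the augmented triple, whose core is the feasibility estimate at $\tau'$. If $\int_\tau^{\tau'}M_{\Qpazo}\ge D$ then $\tilde\mu(\tau')=\nu_{\tau'}\in\Qpazo(\tau')$; otherwise $W_1\big(\tilde\mu(\tau')\,;\Qpazo(\tau')\big)\le W_1(\tilde\mu(\tau'),\nu_{\tau'})\le D-\int_\tau^{\tau'}M_{\Qpazo}\le d_\tau$ by the bound on $D$ above. In both cases $W_1\big(\tilde\mu(\tau')\,;\Qpazo(\tau')\big)\le d_\tau\le\tau\exp\!\big(\int_0^\tau M\big)\varepsilon\le\tau'\exp\!\big(\int_0^{\tau'}M\big)\varepsilon$, since $t\mapsto t\exp(\int_0^t M(s)\,\mathrm{d}s)$ is nondecreasing. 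The moment bound and the bound on $W_1(\mu_0,\tilde\mu(t))$ over $t\in[\tau,\tau']$ follow by combining $W_1(\mu(\tau),\tilde\mu(t))\le\int_\tau^t M_{\Qpazo}$ with the estimates of Proposition \ref{prop:Estimates}-$(a)$,$(b)$ on $[0,\tau]$ and the key observation that $F\cup[\tau,\tau')$ — the ``bad set'' of the augmented triple — is still contained in $[0,T]\setminus C_\varepsilon$, hence $\Lcal^1\big(F\cup[\tau,\tau')\big)\le\varepsilon\le\varepsilon_0$ and $\int_{F\cup[\tau,\tau')}M_{\Qpazo}\le\Mpazo_1(\mu_0)$ by \eqref{absint}; for the moment bound one re-derives the integral inequality of Proposition \ref{prop:Estimates}-$(a)$ for the augmented triple on $[0,\tau']$ and applies Gr\"onwall's lemma, exactly as in the derivation of \eqref{eq:InitialisationMomentBound}. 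Properties \ref{hyp:PT}-$(i)$ and \ref{hyp:PT}-$(iii)$ are immediate: the old intervals and $\mu|_{[0,\tau]}$ are untouched, and $[\tau,\tau')$ has length $\le\varepsilon$, is disjoint from the others, has left endpoint $\tau\notin C_\varepsilon$ (so \ref{hyp:PT}-$(iii)$ does not apply to it), and together with them covers $[0,\tau')$. Hence the augmented triple belongs to $\Fpazo_\varepsilon$ and strictly dominates the maximal one, the desired contradiction; therefore $\tau=T$.

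I expect the only genuine difficulty to be the construction on $[\tau,\tau']$ itself: the new piece of curve must simultaneously obey the slow $M_{\Qpazo}$-governed modulus of continuity imposed by \ref{hyp:PT}-$(iv)$ and preserve the feasibility bound $W_1(\tilde\mu(b)\,;\Qpazo(b))\le b\exp(\int_0^b M)\varepsilon$ of \ref{hyp:PT}-$(ii)$. A naive extension (e.g. $\tilde\mu\equiv\mu(\tau)$ on $[\tau,\tau']$) does not work, because $\int_\tau^{\tau'}M_{\Qpazo}$ is not controlled by the ``feasibility budget'' $\big[\tau'\exp(\int_0^{\tau'}M)-\tau\exp(\int_0^\tau M)\big]\varepsilon$ released over $[\tau,\tau']$; the resolution — that moving monotonically along a geodesic towards the nearest point of $\Qpazo(\tau')$ can never increase the distance to the constraint beyond its value $d_\tau$ at time $\tau$ — is where the argument really needs an idea. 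Everything else is bookkeeping in the spirit of Proposition \ref{prop:Estimates} and Case 1 of Proposition \ref{prop:nonemptyness}.
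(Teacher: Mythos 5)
Your proof is correct, and it follows the paper's overall skeleton (contradict maximality by adjoining a small interval $[\tau,\tau')$ disjoint from $C_\varepsilon$, of type \ref{hyp:PT}-$(iv)$, and verifying the properties of the augmented triple), but the construction of the extension itself is genuinely different. The paper picks $\mu_\tau\in\Qpazo(\tau)$ nearest to $\mu(\tau)$ and $\mu_\sigma\in\Qpazo(\sigma)$ nearest to $\mu_\tau$, uses the gluing lemma to ``translate'' the displacement from $\mu_\tau$ to $\mu_\sigma$ onto $\mu(\tau)$ (the target $\mu_\sigma^*=(\pi^1+\pi^3-\pi^2)_\sharp\eta$), and then interpolates geodesically with the time change $\int_\tau^t M_{\Qpazo}/\int_\tau^\sigma M_{\Qpazo}$; the two key bounds are $W_1(\mu^*(\tau),\mu^*(\sigma))\le W_1(\mu_\tau,\mu_\sigma)\le\Delta_{\mu_0,r_T}(\Qpazo(\tau),\Qpazo(\sigma))$ for the $(iv)$-modulus and $W_1(\mu^*(\sigma);\Qpazo(\sigma))\le W_1(\mu(\tau),\mu_\tau)=d_\tau$ for feasibility. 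You instead move along a $W_1$-geodesic from $\mu(\tau)$ directly towards the metric projection $\nu_{\tau'}$ of $\mu(\tau)$ onto $\Qpazo(\tau')$, at arc-length speed $M_{\Qpazo}$ with clipping at $D$, and obtain exactly the same two bounds (the modulus from the $1$-Lipschitz geodesic parametrisation composed with $x\mapsto\min\{x,D\}$, and terminal feasibility $\le d_\tau$ from $D\le d_\tau+\int_\tau^{\tau'}M_{\Qpazo}$, which uses $\nu_\tau\in\B(\mu_0,r_T)$ and \eqref{eq:MT} just as the paper does for $\mu_\tau$). What your route buys: it avoids the gluing lemma altogether, avoids the normalisation by $\int_\tau^\sigma M_{\Qpazo}$ (which in the paper degenerates if that integral vanishes and would there require the separate observation that $W_1(\mu_\tau,\mu_\sigma)=0$), and your explicit choice $\tau'-\tau\le\varepsilon$ makes the \ref{hyp:PT}-$(i)$ length constraint visible, a point the paper leaves implicit when choosing $\sigma$. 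What the paper's route buys is mainly structural uniformity: it lands on a translated copy of a constraint point without any case distinction between $\int_\tau^{\tau'}M_{\Qpazo}\ge D$ and $<D$, and its bookkeeping for \ref{hyp:PT}-$(ii)$ (moment bound via the Proposition \ref{prop:Estimates}-$(a)$ inequality, Gr\"onwall, and \eqref{absint} on the enlarged bad set) is literally the computation you sketch, so that part of your argument coincides with the paper's. Your closing diagnosis of why the constant extension fails, and that the essential point is that the new terminal feasibility error can be kept at $d_\tau$ while moving within the $M_{\Qpazo}$-budget, is exactly the mechanism the paper's gluing construction realises.
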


\begin{proof}
Suppose by contradiction that $\tau<T$ and choose $\sigma\in(\tau,T]$ such that $[\tau, \sigma)\subseteq [0,T]\setminus C_\varepsilon$. Define the new index set $\Lambda^*$ in such a way that $\{[a_i,b_i)\}_{i \in \Lambda^*} :=\{[a_i,b_i)\}_{i \in \Lambda} \cup [\tau,\sigma)$, and take two elements $\mu_\tau\in \Qpazo(\tau)$ and $\mu_{\sigma}\in \Qpazo(\sigma)$ for which 
\begin{equation*}
W_1(\mu(\tau), \mu_\tau) = W_1(\mu(\tau) \,; \Qpazo(\tau)) \qquad \text{and} \qquad W_1\big(\mu_\tau , \mu_\sigma\big) = W_1(\mu_\tau \,; \Qpazo(\sigma)).
\end{equation*}
Given a pair of optimal plans $\alpha\in \Gamma_o(\mu(\tau),\mu_\tau)$ and $\beta\in\Gamma_o(\mu_\tau, \mu_\sigma)$, there exists by virtue of the gluing lemma, see e.g. \cite[Lemma 5.3.2]{AGS}, a triple plan $\eta\in \Pcal_1(\R^{3d})$ such that $\pi^{1,2}_{\sharp}\eta = \alpha$ and $\pi^{2,3}_{\sharp}\eta=\beta$. This allows us in turn to define the measure
\begin{equation*}
\mu_\sigma^*:=\big(\pi^1 + \pi^3 - \pi^2\big)_{\sharp}\eta
\end{equation*}
where $\pi^1,\pi^2,\pi^3:\mathbb R^{3d}\to\mathbb R^d$ stand for the canonical projections onto the first, second and third component, respectively. At this stage, take any optimal plan $\gamma\in \Gamma_o\big(\mu(\tau), \mu_\sigma^*\big)$ and define the extended curve $\mu^*:[0,\sigma]\to\mathcal P_{1}(\mathbb R^d)$  by
\begin{equation*}
\mu^*(t):=
\left\{
\begin{aligned}
& \mu(t) ~~ & \text{if $t\in[0,\tau]$}, \\
& \bigg(\pi^1 + \displaystyle\frac{\int_{\tau}^t M_{\Qpazo}(\theta)\,\mathrm{d}\theta}{\int_{\tau}^\sigma M_{\Qpazo}(\theta)\,\mathrm{d}\theta} \big( \pi^2 - \pi^1 \big)\bigg)_{\raisebox{4pt}{\hspace{-0.05cm}$\scriptstyle{\sharp}$}} \, \gamma ~~ & \text{if $t\in[\tau,\sigma]$}.
\end{aligned}
\right.
\end{equation*}
Notice in particular that $\mu^*(\sigma)=\mu_\sigma^*$ by construction. Then, it is straightforward to see that the triple $(\sigma, \{[a_i,b_i)\}_{i\in\Lambda^*}, \mu^*(\cdot))$ satisfies \ref{hyp:PT}-$(i)$. To verify that \ref{hyp:PT}-$(iv)$ holds, observe first that
\begin{equation*}
\begin{aligned}
W_{1}(\mu_0,\mu_\tau)&\le W_1(\mu_0,\mu(\tau)) + W_1(\mu(\tau),\mu_\tau) \\
& \le {2(1+c_T) \bigg( 1 + \int_{0}^\tau M(s)\,\mathrm{d}s \bigg)} +  \tau\exp\bigg(\int_{0}^\tau M(s)\,\mathrm{d}s\bigg)\varepsilon \\
& \le {2(1+c_T) \bigg( 1 + \int_{0}^T M(s)\,\mathrm{d}s \bigg)} +  T \exp\bigg(\int_{0}^T M(s)\,\mathrm{d}s\bigg) \\
& = r_T.
\end{aligned}
\end{equation*}
Hence,  $\mu_\tau \in \Qpazo(\tau)\cap \mathbb B\big(\mu_0,r_T\big)$ and $W_1(\mu_\tau,\mu_\sigma)\le \Delta_{\mu_0,r_T}(\Qpazo(\tau),\Qpazo(\sigma))$ by the left absolute continuity of $\Qpazo : [0,T] \rightrightarrows \Pcal_1(\R^d)$ and the definition \eqref{eq:HausdorffSemiDist} of one-sided Hausdorff semidistance. In particular
\begin{equation*}
\begin{aligned}
W_1(\mu^*(\tau),\mu^*(\sigma)) &= W_1\Big(\pi^1_{\sharp}\eta \, , (\pi^1 + \pi^3 -\pi^2)_{\sharp}\eta\Big) \\
& \le \| \pi^3-\pi^2\|_{L^1(\R^{3d},\R^d;\,\eta)} \\
& =  W_1(\mu_\tau,\mu_\sigma) \\
& \le \Delta_{\mu_0,r_T}(\Qpazo(\tau),\Qpazo(\sigma)),
\end{aligned}
\end{equation*}
which allows us to further estimate the distance between $\mu^*(s)$ and $\mu^*(t)$ for any $\tau \leq s \leq t \leq \sigma$ as 
\begin{equation}
\label{eq:DistEstGeod}
\begin{aligned}
W_{1}(\mu^*(s),\mu^*(t)) & \le \bigg( \frac{\int_{s}^tM_{\Qpazo}(\theta)\,\mathrm{d}\theta}{\int_{\tau}^\sigma M_{\Qpazo}(\theta)\,\mathrm{d}\theta} \bigg) \| \pi^2-\pi^1\|_{L^1(\R^{2d},\R^d;\,\gamma)} \\
& =\bigg( \frac{\int_{s}^tM_{\Qpazo}(\theta)\,\mathrm{d}\theta}{\int_{\tau}^\sigma M_{\Qpazo}(\theta)\,\mathrm{d}\theta} \bigg) W_{1}(\mu^*(\tau),\mu^*(\sigma))\\
& \le \bigg( \frac{\int_{s}^tM_{\Qpazo}(\theta)\,\mathrm{d}\theta}{\int_{\tau}^\sigma M_{\Qpazo}(\theta)\,\mathrm{d}\theta} \bigg) \Delta_{\mu_0,r_T}(\Qpazo(\tau),\Qpazo(\sigma)) \\
& \le  \int_{s}^tM_{\Qpazo}(\theta)\,\mathrm{d}\theta, 
\end{aligned}
\end{equation}
from whence we conclude that \ref{hyp:PT}-$(iv)$ holds. We now shift our focus to in \ref{hyp:PT}-$(ii)$. To see that the augmented triple $(\sigma,\{[a_i,b_i)\}_{i \in \Lambda^*},\mu^*(\cdot))$ {satisfies the moment estimate therein, note first that by Proposition \ref{prop:Estimates}-$(a)$ combined with \eqref{eq:DistEstGeod} above, one has that
\begin{equation*}
\begin{aligned}
\Mpazo_1(\mu^*(t)) & \leq \Mpazo_1(\mu(\tau)) + W_1(\mu(\tau),\mu^*(t)) \\
& \leq \Mpazo_1(\mu_0) + 2 \INTDom{M(s) \Big( 1 + \Mpazo_1(\mu^*(s)) \Big)}{[0,\tau] \cap E}{s} + \INTDom{M_{\Qpazo}(s)}{[0,\tau] \cap F}{s}  + \INTSeg{M_{\Qpazo}(s)}{s}{\tau}{t} \\
& \leq \Mpazo_1(\mu_0) + 2 \INTSeg{M(s) \Big( 1 + \Mpazo_1(\mu^*(s)) \Big)}{s}{0}{t} + \INTDom{M_{\Qpazo}(s)}{[0,t] \cap F}{s} 
\end{aligned}
\end{equation*}
for all times $t \in [\tau,\sigma]$, where we recall that $E=\bigcup_{a_i\in C_\varepsilon}[a_i,b_i)$ and $F=\bigcup_{a_i\notin C_\varepsilon}[a_i,b_i)$ by definition. From Gr\"onwall's lemma combined with the fact that $\Lcal^1([0,T] \setminus F) \leq \epsilon$ and \eqref{absint}, we deduce that
\begin{equation*}
\Mpazo_1(\mu^*(t)) \leq 2 \bigg( \Mpazo_1(\mu_0) + \INTSeg{M(s)}{s}{0}{t} \bigg) \exp \bigg( 2 \INTSeg{M(s)}{s}{0}{t} \bigg)  
\end{equation*}
for all times $t \in [\tau,\sigma]$}. The derivation of the second estimate in \ref{hyp:PT}-$(ii)$ is based on a similar idea leveraging the decomposition from Proposition \ref{prop:Estimates}-$(b)$, with 
\begin{equation*}
\begin{aligned}
W_1(\mu_0,\mu^*(t)) & \le W_1(\mu_0,\mu(\tau)) + W_1(\mu(\tau),\mu^*(t)) \\
&\le {2(1+c_T)} \int_{E\cap\,[0,\tau]}M(s)\,\mathrm{d}s + \int_{F\cap\,[0,\tau]}M_{\Qpazo}(s)\,\mathrm{d}s +  \int_{\tau}^t M_{\Qpazo}(s)\,\mathrm{d}s \\
& \le {2(1+c_T)} \int_{0}^tM(s)\,\mathrm{d}s + \int_{F\cap\, [0,t]} M_{\Qpazo}(s)\,\mathrm{d}s \\
& \le {2(1 +c_T)} \bigg( 1 + \int_{0}^t M(s)\,\mathrm{d}s \bigg). 
\end{aligned}
\end{equation*}
Lastly, the third estimate in \ref{hyp:PT}-$(ii)$ stems from the observation that 
\begin{equation*}
\begin{aligned}
W_1(\mu^*(\sigma) \, ; \Qpazo(\sigma)) & \le W_1(\mu^*(\sigma),\mu_{\sigma}) \\
& = W_1\Big((\pi^1+\pi^3-\pi^2)_{\sharp}\eta, \pi^{3}_{\sharp}\eta\Big)\\
&\le \| \pi^2-\pi^1\|_{L^1(\R^{3d},\R^d;\,\eta)} \\
& =W_{1}(\mu(\tau), \mu_\tau) \\
& \le \tau\exp\bigg(\int_{0}^{\tau}M(s)\,\mathrm{d}s\bigg)\varepsilon \\
& \le \sigma\exp\bigg(\int_{0}^{\sigma}M(s)\,\mathrm{d}s\bigg)\varepsilon,
\end{aligned}
\end{equation*}
where we used the fact that the initial triple $(\tau,\{[a_i,b_i)\}_{i \in \Lambda},\mu(\cdot))$ itself satisfies \ref{hyp:PT}-$(ii)$, and Proposition \ref{prop:Estimates}-$(c)$. This, along with all that precedes allows us to conclude that $(\tau,  \{[a_i,b_i)\}_{i\in\Lambda}, \mu(\cdot)) \preceq (\sigma, \{[a_i,b_i)\}_{i\in\Lambda^*}, \mu^*(\cdot))$, thereby contradicting our assumption that the former was maximal.
\end{proof}

\begin{proposition}[Maximal elements are globally defined when $\tau \in C_{\epsilon}$]
\label{prop:GlobalCeps}
For every $\varepsilon\in(0,\varepsilon_0)$, if the triple $(\tau,\{[a_i,b_i)\}_{i\in\Lambda},\mu(\cdot))$ is a maximal element in $(\Fpazo_\varepsilon, \preceq)$ and $\tau\in C_\varepsilon$, then $\tau = T$.
\end{proposition}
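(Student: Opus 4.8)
Suppose by contradiction that $\tau<T$. The plan is to extend the triple $(\tau,\{[a_i,b_i)\}_{i\in\Lambda},\mu(\cdot))$ by one continuity-inclusion interval $[\tau,\sigma)$, with $\sigma=\tau+h$ for a sufficiently small $h>0$, so as to produce a strictly larger element of $\Fpazo_\varepsilon$ and contradict maximality. The construction runs parallel to \textbf{Case 2} in the proof of Proposition \ref{prop:nonemptyness}, the only structural difference being that the flow is now started from $\mu(\tau)$ rather than from $\mu_0$, and that the viability condition must be invoked at a nearest point $\nu_\tau\in\Qpazo(\tau)$ of $\mu(\tau)$, since $\mu(\tau)$ need not lie in $\Qpazo(\tau)$.

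Concretely, I would first fix $\nu_\tau\in\Qpazo(\tau)$ realising $W_1(\mu(\tau),\nu_\tau)=W_1(\mu(\tau)\,;\Qpazo(\tau))$, which exists because $\Qpazo(\tau)$ is proper and, by the Remark following Proposition \ref{prop:Estimates}, $W_1(\mu(\tau)\,;\Qpazo(\tau))\le\tau\exp\!\big(\int_0^\tau M(s)\,\mathrm{d}s\big)\varepsilon<+\infty$. Since $\tau\in C_\varepsilon$, condition \eqref{eq:IntegralViabilityCond} holds at $\nu_\tau$ with radius $r=\varepsilon$, providing some $w\in D\Qpazo(\tau|\nu_\tau)$ lying in $\Liminf{h\to0^+}\frac1h\int_\tau^{\tau+h}V(s,\B(\nu_\tau,\varepsilon))\,\mathrm{d}s$. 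Exactly as in Case 2 --- using that $\tau$ is a Lebesgue point of $M(\cdot)$, Hypothesis \ref{hyp:USC}-$(ii)$ together with \cite[Lemma 2.15]{ViabPp} to upgrade $\dsf_{cc}$-convergence of averaged selections to $L^1(\nu_\tau)$-convergence, the definition \eqref{eq:GraphicalDef} of $D\Qpazo$, and Lemma \ref{lemcc} --- one extracts a single vanishing sequence $h_i\to0^+$ and integrable selections $v_i(\cdot)\in V(\cdot,\B(\nu_\tau,\varepsilon))$, and then fixes $\sigma:=\tau+h_{i_0}$ for $h_{i_0}$ small enough that the solution $\mu^*$ of $\partial_t\mu+\Div_x(v_{i_0}(t)\mu)=0$ on $[\tau,\sigma]$ with $\mu^*(\tau)=\mu(\tau)$ (and $\mu^*:=\mu$ on $[0,\tau]$) obeys
\begin{equation*}
W_1\!\Big(\mu^*(\sigma)\,;\Qpazo(\sigma)\Big)\;\le\;\tfrac{\varepsilon}{3}h_{i_0}+\tfrac{\varepsilon}{3}h_{i_0}+\tfrac{\varepsilon}{3}h_{i_0}+\Big(1+\textstyle\int_\tau^\sigma M(s)\,\mathrm{d}s\Big)\,W_1\!\big(\mu(\tau)\,;\Qpazo(\tau)\big),
\end{equation*}
where the three $\tfrac{\varepsilon}{3}$-terms come respectively from Lemma \ref{lemcc}, from $\big\|\tfrac{1}{h_{i_0}}\int_\tau^{\tau+h_{i_0}}v_{i_0}(s)\,\mathrm{d}s-w\big\|_{L^1(\R^d,\R^d;\nu_\tau)}$ being small, and from $w\in D\Qpazo(\tau|\nu_\tau)$, while the last term is the Lipschitz cost of transporting the admissible move from $\nu_\tau$ onto $\mu(\tau)$, via $\Lip\!\big(\Id+\int_\tau^\sigma v_{i_0}(s)\,\mathrm{d}s\big)\le1+\int_\tau^\sigma M(s)\,\mathrm{d}s$ and the estimates \eqref{eq:WassEst}.

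The main obstacle --- the genuinely new point compared to Case 2 --- is verifying the feasibility estimate of \ref{hyp:PT}-$(ii)$ at the new endpoint, namely $W_1(\mu^*(\sigma)\,;\Qpazo(\sigma))\le\sigma\exp\!\big(\int_0^\sigma M(s)\,\mathrm{d}s\big)\varepsilon$. Since $\mu(\tau)$ is only $O(\varepsilon)$-close to $\Qpazo(\tau)$ rather than inside it, the last term above contributes at order $\varepsilon$, not $\varepsilon h_{i_0}$, and must be absorbed into the slack of the exponential bound. Plugging $W_1(\mu(\tau)\,;\Qpazo(\tau))\le\tau\exp\!\big(\int_0^\tau M(s)\,\mathrm{d}s\big)\varepsilon$ into the display and writing $A:=\int_0^\tau M$, $B:=\int_\tau^\sigma M$, the point is the elementary inequality
\begin{equation*}
\sigma\,e^{A+B}\varepsilon-\big(1+B\big)\tau\,e^{A}\varepsilon\;=\;e^{A}\varepsilon\Big[(\tau+h_{i_0})e^{B}-(1+B)\tau\Big]\;\ge\;h_{i_0}\,e^{A}\varepsilon\;\ge\;h_{i_0}\varepsilon,
\end{equation*}
obtained from $e^{B}\ge1+B$ and $\sigma=\tau+h_{i_0}$; this slack of at least $h_{i_0}\varepsilon$ exactly accommodates the three $\tfrac{\varepsilon}{3}h_{i_0}$-errors, so that indeed $W_1(\mu^*(\sigma)\,;\Qpazo(\sigma))\le\sigma\exp\!\big(\int_0^\sigma M(s)\,\mathrm{d}s\big)\varepsilon$.

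It then remains to check the other items of \ref{hyp:PT} for the augmented triple $(\sigma,\{[a_i,b_i)\}_{i\in\Lambda^*},\mu^*(\cdot))$ with $\{[a_i,b_i)\}_{i\in\Lambda^*}=\{[a_i,b_i)\}_{i\in\Lambda}\cup\{[\tau,\sigma)\}$; this is routine and follows the templates of Proposition \ref{prop:nonemptyness} (Case 2) and Proposition \ref{prop:GlobalNotCeps}. Item $(i)$ holds provided $h_{i_0}\le\varepsilon$. For $(iii)$ --- which applies to $[\tau,\sigma)$ since $\tau\in C_\varepsilon$ --- a two-pass Grönwall argument (as around \eqref{eq:MomentEstProofInit}--\eqref{eq:InitialisationMomentBound}) first yields a crude moment bound for $\mu^*$ on $[\tau,\sigma]$, hence, after shrinking $h_{i_0}$ once more, $W_1(\nu_\tau,\mu^*(t))\le(r_T-1)\varepsilon$ and therefore $v_{i_0}(t)\in V(t,\B(\nu_\tau,\varepsilon))\subseteq V\big(t,\B(\mu^*(t),r_T\varepsilon)\big)$ for $\Lcal^1$-a.e. $t\in[\tau,\sigma]$, so $\mu^*$ indeed solves the required continuity inclusion; a second pass then gives $\Mpazo_1(\mu^*(t))\le2\big(\Mpazo_1(\mu_0)+\int_0^tM(s)\,\mathrm{d}s\big)\exp\!\big(2\int_0^tM(s)\,\mathrm{d}s\big)$ and, via \eqref{eq:BasicEstimates}, $W_1(\mu^*(s),\mu^*(t))\le2(1+c_T)\int_s^tM(\theta)\,\mathrm{d}\theta$ on $[\tau,\sigma]$. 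The two remaining estimates in $(ii)$ (the global moment bound and $W_1(\mu_0,\mu^*(t))\le2(1+c_T)(1+\int_0^tM(s)\,\mathrm{d}s)$) follow by splicing the bounds on $[0,\tau]$ --- where $\mu^*=\mu$ and Proposition \ref{prop:Estimates} applies --- with those just obtained, plus one more Grönwall estimate, exactly as in Proposition \ref{prop:GlobalNotCeps}; and $(iv)$ is vacuous for the new interval because its left endpoint $\tau$ lies in $C_\varepsilon$. We thus obtain $(\tau,\{[a_i,b_i)\}_{i\in\Lambda},\mu(\cdot))\preceq(\sigma,\{[a_i,b_i)\}_{i\in\Lambda^*},\mu^*(\cdot))\in\Fpazo_\varepsilon$ with $\sigma>\tau$, contradicting maximality; hence $\tau=T$.
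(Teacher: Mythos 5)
Your proposal is correct and follows essentially the same argument as the paper: assuming $\tau<T$, extend by one interval $[\tau,\sigma)$ using the viability condition \eqref{eq:IntegralViabilityCond} at the metric projection of $\mu(\tau)$ onto $\Qpazo(\tau)$, run the continuity equation driven by an integrable selection from $\mu(\tau)$, invoke Lemma \ref{lemcc} and the Lebesgue-point choice of $h_{i_0}$, verify \ref{hyp:PT}, and contradict maximality. The only (harmless) difference is in the feasibility estimate at $\sigma$: you compare the affine pushforwards $(\Id+\int_\tau^\sigma v_{i_0}(s)\,\mathrm{d}s)_\sharp$ of $\mu(\tau)$ and $\nu_\tau$ with Lipschitz constant $1+\int_\tau^\sigma M(s)\,\mathrm{d}s$ and absorb the error via $e^{B}\ge 1+B$, whereas the paper compares $\mu^*(\sigma)$ with an auxiliary solution started at the projection point through the flow's Lipschitz bound $\exp\big(\int_\tau^\sigma M(s)\,\mathrm{d}s\big)$, yielding the same budget $\sigma\exp\big(\int_0^\sigma M(s)\,\mathrm{d}s\big)\varepsilon$.
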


\begin{proof}
To begin with, fix $\epsilon \in (0,1\epsilon_0)$ and recall that since the elements of $C_\varepsilon \subseteq [0,T]$ are Lebesgue points of $M(\cdot) \in L^1([0,T],\R_+)$ by definition, there exists $\delta\in(0,1)$ for which 
\begin{equation}
\label{eq:DeltaChoice}
\int_{\tau}^{\tau+\delta} M(s)\,\mathrm{d}s \leq \frac{\varepsilon}{\exp \Big( 2\INTSeg{M(s)}{s}{0}{T} \Big)}.
\end{equation}  
We also fix $\mu_\tau\in \Qpazo(\tau)$ so that $W_1(\mu(\tau),\mu_\tau)=W_1(\mu(\tau)\,;\Qpazo(\tau))$. Let $c>0$ be the constant given as in Lemma \ref{lemcc} which only depends on the magnitudes of $\Mpazo_1(\mu_0)$ and $\|M(\cdot)\|_{L^1([0,T],\R_+)}$, and choose $h_0\in(0,\delta)$ in such a way that
\begin{equation}\label{kemtau}
\bigg( \frac1h \int_\tau^{\tau+h} M(s)\,\mathrm{d} s\bigg) \bigg( \int_\tau^{\tau+h} M(s)\, \mathrm{d} s \bigg) \le \frac{\varepsilon}{2c}
\end{equation}
for every $h\in(0,h_0)$. As a consequence of the viability condition \eqref{eq:IntegralViabilityCond} applied at $(\tau,\mu_\tau) \in\Graph(\Qpazo)$, there exists $w \in L^1(\R^d,\R^d;\mu_{\tau})$ such that  
\begin{equation}
\label{eq:ViabCondProofHard}
w_\tau \in D\Qpazo(\tau | \mu_\tau) \cap 
\Liminf{h\to0^+}\;\frac1h\int_\tau^{\tau+h} V\big(s,\B(\mu_\tau,\varepsilon/2)\big)\, \mathrm{d} s.
\end{equation}
In particular, one may find a vanishing sequence $\{h_i\}_{i \in \N} \subseteq (0,h_0)$ such that 
\begin{equation}
\label{eq:ViabCondProofHardBis}
\lim_{i \to +\infty} \frac{1}{h_i} W_1\Big((\Id + h_i w_\tau)_{\sharp} \mu_\tau \, ; \Qpazo(\tau+h_i)\Big) = 0, 
\end{equation}
along with a family $\{v_i(\cdot)\}_{i \in \N}$ of integrable selections $t \in [\tau,\tau+h_i] \mapsto v_i(t) \in V\big(t,\B(\mu_\tau,\varepsilon/2)\big)$ satisfying
\begin{equation*}
\Big\| \, w_{\tau} - \tfrac1{h_i} \mathsmaller{\int_\tau^{\tau+h_i}} v_i(s)\, \mathrm{d} s \, \Big\|_{L^1(\R^d,\R^d ; \, \mu_{\tau})} ~\underset{i \to +\infty}{\longrightarrow}~ 0,
\end{equation*}
see e.g. the argument detailed in the second part of the proof of Proposition \ref{prop:nonemptyness} above. Therefore, there exists an index $i_1\in\mathbb N$ such that
\begin{equation*}
W_1 \bigg( \Big(\Id+ \mathsmaller{\int_\tau^{\tau+h_i}} v_i(s)\, \mathrm{d}s \Big)_{\hspace{-0.025cm} \raisebox{2pt}{$\scriptstyle\sharp$}} \mu_\tau,(\Id+h_i w_\tau)_{\sharp} \mu_\tau \bigg) \le \frac{\varepsilon}{4}\,h_i,
\end{equation*}
for all $i \geq i_1$. Moreover, it also follows from \eqref{eq:ViabCondProofHardBis} that there is some $i_0 \geq i_1$ such that 
\begin{equation*}
W_1 \Big((\Id+h_i w_\tau)_{\sharp}\mu_\tau \, ; \Qpazo(\tau+h_i) \Big)\le \frac{\varepsilon}{4}\,h_i 
\end{equation*}
whenever $i \geq i_0$, at which point, one may merge the latter two bounds to deduce that
\begin{equation}\label{ll1}
W_1\bigg(\Big(\Id+\mathsmaller{\int_{\tau}^{\tau+h_{i_0}} v_{i_0}(s)\,\mathrm{d} s}\Big)_{\hspace{-0.05cm} \raisebox{2pt}{$\scriptstyle\sharp$}} \mu_\tau\;;\;\Qpazo(\tau+h_{i_0})\bigg) 
\le \frac{\varepsilon}{2}\,h_{i_0}.
\end{equation}
At this stage, observe that following Lemma~\ref{lemcc}, the Cauchy problem
\begin{equation*}
\left\{
\begin{aligned}
& \partial_t \nu(t) + \Div_x(v_{i_0} \nu(t)) = 0, \\
& \nu(\tau)= \mu_{\tau},
\end{aligned}
\right.
\end{equation*}
admits a unique solution $\nu(\cdot) \in \AC([\tau,\tau+h_{i_0}],\Pcal_1(\R^d))$ given explicitly by $\nu(t):=(\Phi^{v_{i_0}}_{(\tau,t)})_\sharp\mu_\tau$ for all times $t \in [\tau,\tau+h_{i_0}]$, which additionnally satisfies
\begin{equation}\label{ll2}
\begin{aligned}
W_1\bigg(\nu(\tau+h_{i_0}) \,, \Big(\Id+\mathsmaller{\int_\tau^{\tau+h_{i_0}} v_{i_0}(s)\, \mathrm{d}s}\Big)_{\hspace{-0.025cm} \raisebox{2pt}{\hspace{-0.125cm} $\scriptstyle\sharp$}} \,\mu_\tau\bigg)
& \le c \bigg(\frac1{h_{i_0}} \int_\tau^{\tau+h_{i_0}} M(s)\, \mathrm{d}s\bigg)\bigg(\int_\tau^{\tau+h_{i_0}} M(s)\, \mathrm{d}s\bigg) h_{i_0} \\
& \le \frac{\varepsilon}{2}\,h_{i_0},
\end{aligned} 
\end{equation}
where the last inequality follows from \eqref{kemtau}. Combining both
\eqref{ll1} and \eqref{ll2}, we obtain
\begin{equation}\label{l3}
W_1 \Big(\nu(\tau + h_{i_0}) \, ; \Qpazo(\tau + h_{i_0}) \Big) \le \varepsilon\,h_{i_0}.
\end{equation}
Set now $\sigma := \tau+h_{i_0}$, let $\Lambda^*$ be the new index set for which $\{[a_i,b_i)\}_{i \in \Lambda^*} := \{[a_i,b_i)\}_{i \in \Lambda} \cup [\tau,\sigma)$, and define the extended curve $\mu^*:[0,\sigma]\to\mathcal P_1(\R^d)$ by
\begin{equation*}
\mu^*(t):=\begin{cases}
\mu(t) & \text{if } t\in[0,\tau],\\[2pt]
(\Phi^{v_{i_0}}_{(\tau,t)})_\sharp\,\mu(\tau) & \text{if } t\in[\tau,\sigma].
\end{cases}
\end{equation*}
It is then straightforward to see that the triple $(\sigma,\big\{[a_i,b_i)\big\}_{i\in\Lambda^*},\mu^*(\cdot))$ satisfies \ref{hyp:PT}-$(i)$. To see that \ref{hyp:PT}-$(iii)$ also holds, one may simply observe that 
\begin{equation*}
\begin{aligned}
\derv{}{t}{} \INTDom{\varphi(t,x)}{\R^d}{\mu(t)(x)} & = \INTDom{\derv{}{t} \varphi \Big( t, \Phi_{(\tau,t)}^{v_{i_0}}(x) \Big)}{\R^d}{\mu(\tau)(x)} \\
& = \INTDom{\bigg( \partial_t \varphi \Big( t, \Phi_{(\tau,t)}^{v_{i_0}}(x) \Big) +  \Big\langle \nabla_x \varphi \Big( t, \Phi_{(\tau,t)}^{v_{i_0}}(x) \Big) , v_{i_0} \Big( t,\Phi_{(\tau,t)}^{v_{i_0}}(x) \Big) \Big\rangle \bigg)}{\R^d}{\mu(\tau)(x)} \\
& = \INTDom{\Big( \partial_t \varphi(t,x) + \big\langle \nabla_x \varphi(t,x) , v_{i_0}(t,x) \big\rangle \Big)}{\R^d}{\mu(t)(x)}
\end{aligned}
\end{equation*}
for $\Lcal^1$-almost every $t \in [\tau,\sigma)$ and each $\varphi \in C^{\infty}_c((\tau,\sigma) \times \R^d)$. {At this point, we can leverage the moment inequality from \eqref{eq:BasicEstimates} and apply Gr\"onwall's lemma while recalling that $\Mpazo_1(\mu(\tau)) \leq c_T$ and $\epsilon \leq 1/r_T < 1$ to get the crude bound 
\begin{equation*}
\Mpazo_1(\mu^*(t)) \leq 2(1+c_T) \bigg( 1 + \INTSeg{M(s)}{s}{0}{T} \bigg) \exp \bigg( \INTSeg{M(s)}{s}{0}{T} \bigg)
\end{equation*}
for all times $t \in [\tau,\sigma]$. This, combined with the absolute continuity estimate in \eqref{eq:BasicEstimates}, further implies 
\begin{equation}
\label{eq:DistanceExtCurve}
\begin{aligned}
W_1(\mu(\tau),\mu^*(t)) & \leq \INTSeg{\NormL{v_{i_0}(s)}{1}{\R^d,\R^d;\, \mu^*(s)}}{s}{\tau}{t} \\
& \leq \INTSeg{M(s) \Big( 2 + \Mpazo_1(\mu^*(s)) + \Mpazo_1(\mu(\tau)) \Big) }{s}{\tau}{t} \\
& \leq 4(1+c_T) \bigg( 1 + \INTSeg{M(s)}{s}{0}{T} \bigg) \exp \bigg( 2\INTSeg{M(s)}{s}{0}{T} \bigg) \INTSeg{M(s)}{s}{\tau}{t} \\
& \leq 2(1+c_T) \bigg( 1 + \INTSeg{M(s)}{s}{0}{T} \bigg) \epsilon
\end{aligned}
\end{equation}
by our choice of $\delta > 0$ made in \eqref{eq:DeltaChoice}. Lastly, upon remarking that 
\begin{equation*}
\begin{aligned}
W_1(\mu_{\tau},\mu^*(t)) & \leq W_1(\mu_{\tau},\mu(\tau)) + W_1(\mu(\tau),\mu^*(t)) \\
& \leq \tau \exp \bigg( \INTSeg{M(s)}{s}{0}{\tau} \bigg) \epsilon + 2(1+c_T) \bigg( 1 + \INTSeg{M(s)}{s}{0}{T} \bigg) \epsilon \\
& \leq (r_T-1) \epsilon
\end{aligned}
\end{equation*}
as a consequence of \eqref{eq:DeltaChoice} and \eqref{eq:DistanceExtCurve}, we finally obtain
\begin{equation}
\label{eq:ExtendedVelInc}
v_{i_0}(t)\in V \big(t,\B( \mu_\tau, \epsilon)\big) \subseteq V\big(t \, , \mathbb B(\mu^*(t), r_T \varepsilon)\big).
\end{equation} 
To establish \ref{hyp:PT}-$(ii)$, note to begin with that $\mu^* : [0,\sigma] \to \Pcal_1(\R^d)$ is clearly continuous by construction. By repeating the moment estimates from the proof of Proposition \ref{prop:Estimates}-$(a)$ while using \eqref{eq:ExtendedVelInc} along with the fact that $ r_T\epsilon \leq 1$, one gets the refined bound
\begin{equation*}
\begin{aligned}
& \Mpazo_1(\mu^*(t)) \\
& \hspace{0.3cm} \leq \bigg( \Mpazo_1(\mu(\tau)) + 2 \INTSeg{M(s)}{s}{\tau}{t} \bigg) \exp \bigg( 2 \INTSeg{M(s)}{s}{\tau}{t} \bigg) \\
& \hspace{0.3cm} \leq \Bigg( 2 \bigg( \Mpazo_1(\mu_0) + \INTSeg{M(s)}{s}{0}{\tau} \bigg) \exp \bigg( 2 \INTSeg{M(s)}{s}{0}{\tau} \bigg) + 2 \INTSeg{M(s)}{s}{\tau}{t} \Bigg) \exp \bigg( 2 \INTSeg{M(s)}{s}{\tau}{t} \bigg) \\
& \hspace{0.3cm} \leq 2 \bigg( \Mpazo_1(\mu_0) + \INTSeg{M(s)}{s}{0}{t} \bigg) \exp \bigg( 2 \INTSeg{M(s)}{s}{0}{t} \bigg) \\
& \hspace{0.3cm} =c_T
\end{aligned}
\end{equation*}
for all times $t \in [\tau,\sigma]$. Regarding now the second estimate in \ref{hyp:PT}-$(ii)$, the latter stems from the basic estimate  
\begin{equation}
\label{eq:ACContIncEst}
W_1(\mu(\tau),\mu^*(t)) \leq \INTSeg{\NormL{v_{i_0}(s)}{1}{\R^d,\R^d ; \, \mu^*(s)}}{s}{\tau}{t} \leq 2(1+c_T) \INTSeg{M(s)}{s}{\tau}{t},
\end{equation}
for all times $t \in [\tau,\sigma]$, where we used the fact that $\Mpazo_1(\mu^*(t)) \leq c_T$}. This allows us to deduce 
\begin{equation*}
\begin{aligned}
W_1(\mu_0,\mu^*(t)) & \leq W_1(\mu_0,\mu(\tau)) + W_1(\mu(\tau),\mu^*(t)) \\
& \leq {2(1+ c_T)} \bigg( 1+ \INTSeg{M(s)}{s}{0}{\tau} \bigg) + {2(1+c_T)} \INTSeg{M(s)}{s}{\tau}{t} \\
& \leq {2(1+ c_T)} \bigg( 1+ \INTSeg{M(s)}{s}{0}{t} \bigg) 
\end{aligned}
\end{equation*}
for all times $t \in [\tau,\sigma]$. Concerning the third estimate in \ref{hyp:PT}-$(ii)$, one may note that
\begin{equation*}
\begin{aligned}
W_1(\mu^*(\sigma) \, ; \Qpazo(\sigma)) & = W_1\Big(\mu^*(\tau + h_{i_0}) \, ; \Qpazo(\tau + h_{i_0}) \Big) \\
& \le	W_1\Big(\mu^*(\tau + h_{i_0}), \nu(\tau + h_{i_0})\Big) + W_1 \Big(\nu(\tau + h_{i_0}) \, ; \Qpazo(\tau + h_{i_0}) \Big) \\
& \le \exp\bigg(\int_{\tau}^{\tau + h_{i_0}} M(s)\,\mathrm{d}s\bigg) W_{1}(\mu(\tau),\mu_\tau) + \varepsilon h_{i_0} \\
& \le \tau \exp\bigg(\int_{0}^{\tau+h_{i_0}} M(s)\,\mathrm{d}s\bigg) \varepsilon +  h_{i_0}\exp \bigg(\int_{0}^{\tau+h_{i_0}} M (s)\,\mathrm{d}s\bigg) \varepsilon \\
& = \sigma \exp\bigg(\int_{0}^{\sigma} M(s) \,\mathrm{d}s\bigg) \varepsilon
\end{aligned}
\end{equation*}
where we used (\ref{l3}) and the flow estimate $\Lip(\Phi_{(\tau,t)}^{v_{i_0}}) \leq \exp\big( \INTSeg{M(s)}{s}{\tau}{t} \big)$ which holds for all times $t \in [\tau,\sigma]$ (see e.g. \cite[Chapter 16]{AmbrosioBS2021}), along with Proposition \ref{prop:Estimates}-$(c)$. In conclusion, we have shown that $(\tau, \{[a_i,b_i)\}_{i\in \Lambda}, \mu(\cdot))\preceq(\sigma,\{[a_i,b_i)\}_{i\in \Lambda^*}, \mu^*(\cdot))$, which contradicts our initial assumption that the former was maximal. 
\end{proof}

Upon combining the results of Proposition \ref{prop:GlobalNotCeps} and Proposition \ref{prop:GlobalCeps}, we may deduce the following. 

\begin{corollary}[Maximal elements are globally defined for every $\tau$]
\label{cor:max}
For every $\varepsilon\in(0,\varepsilon_0)$, if the triple $(\tau,\{[a_i,b_i)\}_{i\in\Lambda},\mu(\cdot))$ is a maximal element in $(\Fpazo_\varepsilon, \preceq)$, then $\tau = T$.
\end{corollary}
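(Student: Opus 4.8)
The proof of Corollary \ref{cor:max} is essentially immediate once Propositions \ref{prop:GlobalNotCeps} and \ref{prop:GlobalCeps} are in hand. The plan is to observe that any maximal element $(\tau,\{[a_i,b_i)\}_{i\in\Lambda},\mu(\cdot))$ in $(\Fpazo_\varepsilon,\preceq)$ must have its time horizon $\tau$ either inside or outside the compact set $C_\varepsilon \subseteq [0,T]$, and in both situations the corresponding proposition forces $\tau = T$. So the argument reduces to a two-line dichotomy.

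First I would fix $\varepsilon \in (0,\varepsilon_0)$ and let $(\tau,\{[a_i,b_i)\}_{i\in\Lambda},\mu(\cdot))$ be a maximal element in $(\Fpazo_\varepsilon,\preceq)$, whose existence is not asserted here (it will follow from Zorn's lemma in the subsequent part of the argument). Since $C_\varepsilon \subseteq [0,T]$, exactly one of the two cases $\tau \in C_\varepsilon$ or $\tau \notin C_\varepsilon$ occurs. If $\tau \notin C_\varepsilon$, then Proposition \ref{prop:GlobalNotCeps} applies verbatim and yields $\tau = T$. If instead $\tau \in C_\varepsilon$, then Proposition \ref{prop:GlobalCeps} applies and likewise gives $\tau = T$. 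In either case $\tau = T$, which is the claim, and since $\varepsilon \in (0,\varepsilon_0)$ was arbitrary, this holds for every such $\varepsilon$.

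There is essentially no obstacle here: the content of the result lies entirely in the two preceding propositions, each of which rules out a proper extension of a candidate maximal triple with $\tau < T$ by exhibiting, through the constructions already carried out, a strictly larger element of $\Fpazo_\varepsilon$ with respect to $\preceq$, thereby contradicting maximality. The only thing worth stating explicitly is that the partition into the two cases is exhaustive, which is immediate since $C_\varepsilon$ is simply a subset of $[0,T]$ and $\tau \in (0,T] \subseteq [0,T]$.

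\begin{proof}
Fix $\varepsilon \in (0,\varepsilon_0)$ and let $(\tau,\{[a_i,b_i)\}_{i\in\Lambda},\mu(\cdot))$ be a maximal element in $(\Fpazo_\varepsilon,\preceq)$. Since $\tau \in (0,T] \subseteq [0,T]$ and $C_\varepsilon \subseteq [0,T]$, either $\tau \in C_\varepsilon$ or $\tau \notin C_\varepsilon$. In the first case, Proposition \ref{prop:GlobalCeps} yields $\tau = T$, while in the second case Proposition \ref{prop:GlobalNotCeps} yields $\tau = T$ as well. Hence $\tau = T$, which is the desired conclusion.
\end{proof}
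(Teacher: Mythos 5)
Your proof is correct and coincides with the paper's own argument: the corollary is stated there precisely as the combination of Propositions \ref{prop:GlobalNotCeps} and \ref{prop:GlobalCeps} through the dichotomy $\tau \in C_\varepsilon$ versus $\tau \notin C_\varepsilon$, which is exactly what you write.
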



\paragraph*{Existence of maximal elements.}

In what precedes, we have shown that maximal elements in $\Fpazo_\varepsilon$ are of special importance. Indeed, they encode globally defined curves which approximately solve the continuity inclusion, while lying quantitatively close to the images of the constraints mapping $\Qpazo : [0,T] \rightrightarrows \Pcal_1(\R^d)$ at all times. We will now prove that such maximal elements do exist, by employing the celebrated Zorn's lemma. 

\begin{theorem}[Zorn's lemma]
Let $(\Fpazo,\preceq)$ be a partially ordered set. Suppose that every chain, i.e. every totally ordered subset, has an upper bound in $\Fpazo$. Then $(\Fpazo,\preceq)$ contains at least one maximal element.
\end{theorem}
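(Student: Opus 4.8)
The displayed statement is the classical Zorn lemma, equivalent over ZF to the axiom of choice; in a paper of this kind the natural move is to record it and cite a standard reference, since only its \emph{application} to $(\Fpazo_\varepsilon,\preceq)$ carries paper-specific content. If a proof is nonetheless wanted, I would reproduce one of the two standard contradiction arguments, as follows.

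First I would assume, towards a contradiction, that $(\Fpazo,\preceq)$ has no maximal element. Then every chain $C\subseteq\Fpazo$ has an upper bound $u$ by hypothesis, and as $u$ is not maximal there is $u'\in\Fpazo$ with $u\prec u'$; thus every chain possesses a \emph{strict} upper bound, and by the axiom of choice I may fix a selector $f$ assigning to each chain one of its strict upper bounds. The plan is then to build, by transfinite recursion on the ordinals, a map $\alpha\mapsto a_\alpha\in\Fpazo$ via $a_\alpha:=f(\{a_\beta:\beta<\alpha\})$: a straightforward induction shows that $\{a_\beta:\beta<\alpha\}$ is indeed a chain (the construction forces $a_\beta\prec a_\gamma$ for $\beta<\gamma$), so the recursion is legitimate and yields a strictly increasing, hence injective, ordinal-indexed family in $\Fpazo$. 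Since $\Fpazo$ is a set this is impossible, the contradiction being delivered by Hartogs' lemma (some ordinal fails to inject into $\Fpazo$), equivalently by the Burali-Forti paradox. An alternative I would keep in reserve, avoiding ordinals, is Zermelo's tower argument: call $A\subseteq\Fpazo$ an $f$-tower if $A$ is well-ordered by $\preceq$ and $x=f(\{y\in A:y\prec x\})$ for every $x\in A$; prove that any two $f$-towers are comparable (one being an initial segment of the other), conclude that the union $U$ of all $f$-towers is again an $f$-tower, and observe that $U\cup\{f(U)\}$ is a strictly larger $f$-tower, a contradiction.

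The step I expect to demand the most care is the comparability of $f$-towers in the second proof — or, in the recursion-based proof, the bookkeeping needed to see that the transfinite recursion is well-defined and that the resulting sequence injects into the \emph{set} $\Fpazo$, so that Hartogs' bound bites; the remaining points (existence of strict upper bounds under the no-maximum hypothesis, and the induction giving strict monotonicity) are routine. In the body of the paper this argument need not appear: I would simply invoke Zorn's lemma and pass to verifying that every chain in $(\Fpazo_\varepsilon,\preceq)$ admits an upper bound, which is where the real work of the construction lies.
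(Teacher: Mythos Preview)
Your assessment is correct: the paper does not prove Zorn's lemma at all, it merely states it as a classical result and immediately applies it in the subsequent proposition to show that every chain in $(\Fpazo_\varepsilon,\preceq)$ has an upper bound. Your sketched arguments (transfinite recursion with Hartogs' bound, or the Zermelo tower argument) are standard and correct, but go well beyond what the paper provides; your instinct to simply invoke the lemma and move on to the chain-upper-bound verification is exactly what the authors do.
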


\begin{proposition}[Existence of a maximal triple]
\label{exismax}
For every $\varepsilon \in (0,\epsilon_0)$, the family $\Fpazo_\varepsilon$ has at least one maximal element for the partial order $\preceq$. 
\end{proposition}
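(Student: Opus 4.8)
The plan is to invoke Zorn's lemma, so that the whole task reduces to showing that every chain in $(\Fpazo_\varepsilon,\preceq)$ has an upper bound lying in $\Fpazo_\varepsilon$; the empty chain is covered by Proposition~\ref{prop:nonemptyness}, which already guarantees that $\Fpazo_\varepsilon\neq\emptyset$. Given a nonempty chain $\{(\tau_\alpha,\{[a^\alpha_i,b^\alpha_i)\}_{i\in\Lambda_\alpha},\mu_\alpha(\cdot))\}_{\alpha\in A}$, I would first glue its elements together: set $\tau^*:=\sup_{\alpha}\tau_\alpha\in(0,T]$, take $\Lambda^*:=\bigcup_\alpha\Lambda_\alpha$ with the (consistent, by the defining conditions \eqref{eq:PartialOrder2} of $\preceq$ and total ordering of the chain) interval $[a^*_i,b^*_i)$ assigned to each $i\in\Lambda^*$, and define $\mu^*(t):=\mu_\alpha(t)$ for $t<\tau^*$ and any $\alpha$ with $\tau_\alpha>t$, which is again well defined by \eqref{eq:PartialOrder2}. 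Then one checks the easy structural facts for \ref{hyp:PT}-$(i)$: the $[a^*_i,b^*_i)$ are pairwise disjoint nonempty intervals of length $\le\varepsilon$ whose union is $\bigcup_\alpha[0,\tau_\alpha)=[0,\tau^*)$, and $\Lambda^*$ is at most countable since any family of pairwise disjoint nonempty real intervals is countable.

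The crux is to extend $\mu^*$ continuously to the right endpoint $\tau^*$. If the supremum is attained, i.e. $\tau^*=\tau_{\alpha_0}$ for some $\alpha_0$, there is nothing to do: set $\mu^*(\tau^*):=\mu_{\alpha_0}(\tau^*)$, which is consistent by \eqref{eq:PartialOrder2}. Otherwise, I would exploit the fact that the absolute continuity estimate of Proposition~\ref{prop:Estimates}-$(b)$ is \emph{uniform in the triple} — its bounding functions $M(\cdot)$ and $M_{\Qpazo}(\cdot)$ are fixed — so that $W_1(\mu^*(s),\mu^*(t))\le 2(1+c_T)\int_s^tM(\theta)\,\mathrm{d}\theta+\int_s^tM_{\Qpazo}(\theta)\,\mathrm{d}\theta$ for all $0\le s\le t<\tau^*$; by absolute continuity of the Lebesgue integral the right-hand side vanishes as $s,t\to(\tau^*)^-$, hence $\mu^*(t)$ is Cauchy in the complete space $(\Pcal_1(\R^d),W_1(\cdot,\cdot))$ and converges to a value we take as $\mu^*(\tau^*)$. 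In either case this upgrades $\mu^*$ to an element of $\AC([0,\tau^*],\Pcal_1(\R^d))$ with $\mu^*(0)=\mu_0$, and this is the step I expect to be the main obstacle, being the only place where something genuinely new happens rather than inheritance from the chain.

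Finally I would verify \ref{hyp:PT}-$(ii)$--$(iv)$ for the glued triple $(\tau^*,\{[a^*_i,b^*_i)\}_{i\in\Lambda^*},\mu^*(\cdot))$. Items $(iii)$ and $(iv)$ are pure inheritance: on each $[a^*_i,b^*_i)$ the curve $\mu^*$ agrees with $\mu_\beta$ for any $\beta$ with $i\in\Lambda_\beta$, so the continuity inclusion, the integrable velocity selection, and the two absolute continuity bounds transfer verbatim. For item $(ii)$, the moment bound, the bound on $W_1(\mu_0,\mu^*(t))$, and the feasibility bound on $W_1(\mu^*(b^*_i)\,;\Qpazo(b^*_i))$ hold for every $t<\tau^*$ and every $b^*_i$ because they hold along the chain (noting that $b^*_i=\tau^*$ can only occur in the attained case); at $t=\tau^*$ they pass to the limit using that $|\Mpazo_1(\mu)-\Mpazo_1(\nu)|\le W_1(\mu,\nu)$, the $W_1$-continuity of $\nu\mapsto W_1(\mu_0,\nu)$ and of $\nu\mapsto W_1(\nu\,;\Qpazo(\tau^*))$, and the continuity in $t$ of the right-hand sides of the bounds. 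This shows the glued triple belongs to $\Fpazo_\varepsilon$; since by construction $\tau_\alpha\le\tau^*$, $\Lambda_\alpha\subseteq\Lambda^*$, the intervals agree, and $\mu^*\equiv\mu_\alpha$ on $[0,\tau_\alpha]$ for every $\alpha$, it dominates the whole chain for $\preceq$. Zorn's lemma then yields a maximal element of $(\Fpazo_\varepsilon,\preceq)$.
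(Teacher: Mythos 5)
Your proposal is correct and follows essentially the same route as the paper: reduce to Zorn's lemma, glue the chain with $\tau^*=\sup_\alpha\tau_\alpha$, and obtain the continuous extension to $\tau^*$ via a Cauchy argument in $(\Pcal_1(\R^d),W_1(\cdot,\cdot))$ based on the uniform estimate of Proposition \ref{prop:Estimates}-$(b)$, before verifying \ref{hyp:PT}. The extra detail you give on inheriting and passing the \ref{hyp:PT} estimates to the limit matches what the paper leaves implicit.
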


\begin{proof}
Let $\lbrace (\tau_\alpha, \big\{[a^\alpha_i,b^\alpha_i)\}_{i\in \Lambda_\alpha}, \mu_\alpha)\rbrace_{\alpha\in \mathcal I}$ be an arbitrary chain in $\Fpazo_\varepsilon$, and let us prove that it has an upper bound. To this end, set $\tau:= \sup_{\alpha\in\mathcal I}\tau_\alpha$ and note that, since the set  $\bigcup_{\alpha\in\mathcal I}\{[a_i^\alpha,b_{i}^\alpha)\}_{i\in{\Lambda_\alpha}}$ is equal to the union of pairwise disjoint nonempty intervals, we can find a countable set $\Lambda$ and a family of intervals $\{[a_i,b_i)\}_{i\in\Lambda}$ such that 
\begin{equation*}
\bigcup_{\alpha\in\mathcal{I}}\{[a_i^\alpha,b_{i}^\alpha)\}_{i\in\Lambda_\alpha}=\{[a_i,b_i)\}_{i\in\Lambda}.
\end{equation*}
Let now $\mu:[0,\tau)\to\mathcal P_1(\mathbb R^d)$ be the curve given by $\mu(t) = \mu_\alpha(t)$ if $t\in[0,\tau_\alpha]$, which we recall is licit since
\begin{equation*}
\mu_{\alpha}(t) = \mu_{\beta}(t)  \qquad
\end{equation*}
whenever $t \in [0,\min\{\tau_{\alpha},\tau_{\beta}\}]$ for any given $\alpha,\beta \in \mathcal{I}$, by definition \eqref{eq:PartialOrder1}-\eqref{eq:PartialOrder2} of the partial order $\preceq$. To prove that $(\tau,\{[a_i,b_i)\}_{i\in\Lambda}, \mu(\cdot))$ is an upper bound of the chain, it is enough to show that $\lim_{t\to \tau^-}\mu(t)$ exists, as one may then extend $\mu : [0,\tau] \to \Pcal_1(\R^d)$ continuously, and from there infer all the requirements of \ref{hyp:PT}. Let $\{t_n\}_{n\in\mathbb N}\subseteq[0,\tau)$ be an arbitrary sequence such that $t_n\to \tau^-$, and note that by Proposition \ref{prop:Estimates}-$(b)$, one has that  
\begin{equation*}
W_1(\mu(t_n),\mu(t_{n+p})) \leq \bigg| \INTSeg{\Big( 2(1+c_T)M(\theta) + M_{\Qpazo}(\theta) \Big)}{\theta}{t_n}{t_{n+p}} \, \bigg| ~\underset{n,p \to +\infty}{\longrightarrow}~ 0.  
\end{equation*}
Whence, the sequence $\{\mu(t_n)\}_{n\in\mathbb N}$ is a Cauchy and therefore admits a limit by the completeness of $(\mathcal P_1(\mathbb R^d),W_1(\cdot,\cdot))$. It is possible to verify upon using the very same estimate that the latter is independent of the choice of the sequence $\{t_n\}_{n\in\mathbb N}$, and we may thus extend $\mu : [0,\tau] \to \Pcal_1(\R^d)$ continuously, so that 
\begin{equation*}
(\tau_\alpha, \big\{[a^\alpha_i,b^\alpha_i)\}_{i\in \Lambda_\alpha}, \mu_\alpha) \preceq (\tau,\{[a_i,b_i)\}_{i\in\Lambda}, \mu(\cdot))
\end{equation*}
for every $\alpha \in \mathcal{I}$. The existence of a maximal element in $(\Fpazo_{\epsilon},\preceq)$ follows then from Zorn's lemma.
\end{proof}

Synthesising all the above results, we have the following fact that will play a crucial role in the ensuing proof of Theorem \ref{Thm4}.  

\begin{corollary}[On maximal elements in $\Fpazo_{\epsilon}$]
\label{corapp}
For every $\varepsilon\in(0,\varepsilon_0)$, there exists a family of intervals $\{[a_i,b_i)\}_{i\in\Lambda}$ and a curve $\mu(\cdot) \in \AC([0,T],\Pcal_1(\mathbb R^d))$ such that $(T,\{[a_i,b_i)\}_{i\in\Lambda},\mu(\cdot)) \in \Fpazo_\varepsilon$.
\end{corollary}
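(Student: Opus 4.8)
The plan is to assemble Corollary \ref{corapp} directly from the three preceding building blocks, which together already contain everything needed. First I would invoke Proposition \ref{exismax}: for each fixed $\varepsilon \in (0,\varepsilon_0)$, the partially ordered set $(\Fpazo_\varepsilon, \preceq)$ possesses a maximal element, say $(\tau, \{[a_i,b_i)\}_{i \in \Lambda}, \mu(\cdot))$. Note that, by Proposition \ref{prop:nonemptyness}, $\Fpazo_\varepsilon$ is nonempty, so this maximal element genuinely exists (and the set $C_\varepsilon \subseteq [0,T]$ used to define $\Fpazo_\varepsilon$ is available by the standing hypotheses, since $M(\cdot)$ has almost every point as a Lebesgue point and \eqref{eq:IntegralViabilityCond} holds on a full-measure set).

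Next I would apply Corollary \ref{cor:max} — itself the synthesis of Propositions \ref{prop:GlobalNotCeps} and \ref{prop:GlobalCeps}, which handle the two cases $\tau \notin C_\varepsilon$ and $\tau \in C_\varepsilon$ respectively — to conclude that the time horizon of this maximal triple must equal $T$, i.e. $\tau = T$. Thus the maximal element is of the form $(T, \{[a_i,b_i)\}_{i \in \Lambda}, \mu(\cdot)) \in \Fpazo_\varepsilon$. By property \ref{hyp:PT}-$(i)$ the index set $\Lambda$ is at most countable and the intervals $\{[a_i,b_i)\}_{i \in \Lambda}$ partition $[0,T)$ into nonempty pieces of length at most $\varepsilon$, and by Proposition \ref{prop:Estimates}-$(b)$ the curve $\mu(\cdot)$ is absolutely continuous on $[0,T]$, i.e. $\mu(\cdot) \in \AC([0,T], \Pcal_1(\R^d))$, which is exactly the asserted regularity. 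This is essentially a one-line deduction once the machinery is in place, so there is no real obstacle here — the hard work has all been done in Section \ref{fas}, and the only thing to be careful about is simply to cite the right pieces in the right order (nonemptiness $\Rightarrow$ existence of a maximal element $\Rightarrow$ maximal implies $\tau = T$).

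Here is the proof I would write.

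\begin{proof}
Fix $\varepsilon \in (0,\varepsilon_0)$ and let $C_\varepsilon \subseteq [0,T]$ be a compact set as in \eqref{eq:Ceps}, whose existence is guaranteed by the fact that almost every point of $[0,T]$ is a Lebesgue point of $M(\cdot) \in L^1([0,T],\R_+)$ at which the viability condition \eqref{eq:IntegralViabilityCond} holds. By Proposition \ref{prop:nonemptyness}, the family $\Fpazo_\varepsilon$ is nonempty and $\preceq$ is a partial order on it, while Proposition \ref{exismax} ensures that $(\Fpazo_\varepsilon,\preceq)$ admits a maximal element, say $(\tau,\{[a_i,b_i)\}_{i\in\Lambda},\mu(\cdot))$. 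By Corollary \ref{cor:max}, every maximal element of $(\Fpazo_\varepsilon,\preceq)$ has time horizon equal to $T$, so that in fact $\tau = T$ and $(T,\{[a_i,b_i)\}_{i\in\Lambda},\mu(\cdot)) \in \Fpazo_\varepsilon$. Finally, property \ref{hyp:PT}-$(i)$ guarantees that $\Lambda$ is at most countable and that $\{[a_i,b_i)\}_{i\in\Lambda}$ is a family of nonempty pairwise disjoint intervals partitioning $[0,T)$, whereas Proposition \ref{prop:Estimates}-$(b)$ yields $\mu(\cdot) \in \AC([0,T],\Pcal_1(\R^d))$. This concludes the proof.
\end{proof}
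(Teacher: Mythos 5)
Your proposal is correct and follows exactly the same route as the paper: nonemptiness of $\Fpazo_\varepsilon$ (Proposition \ref{prop:nonemptyness}), existence of a maximal element via Proposition \ref{exismax}, the identification $\tau = T$ via Corollary \ref{cor:max}, and absolute continuity of the curve from Proposition \ref{prop:Estimates}-$(b)$. No gaps; the extra remarks on \ref{hyp:PT}-$(i)$ are harmless additions to what the paper states.
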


\begin{proof}
By Proposition \ref{prop:nonemptyness}, the family $\Fpazo_\varepsilon$ is nonempty and due to  Proposition \ref{exismax} there exists a maximal element $(\tau,\{[a_i,b_i)\}_{i\in\Lambda},\mu(\cdot))$ in $(\Fpazo_\varepsilon,\preceq)$. By Corollary \ref{cor:max}, it must be so that $\tau = T$, whereas the absolute continuity of $\mu(\cdot)$ follows directly from Proposition \ref{prop:Estimates}-$(b)$. 
\end{proof}


\paragraph*{A technical lemma on approximate solutions.}	

In this last preparatory subsection, we prove the existence of a sequence of curves satisfying a list of relevant technical properties, which stem from them being maximal elements in $(\Fpazo_{\epsilon},\preceq)$ for adequate choices of $\epsilon \in (0,\epsilon_0)$. 

\begin{lemma}[A good sequence of approximate solutions]
\label{techlem}
Suppose that the assumptions of Theorem \ref{Thm4} hold. Then for any $\mu_0\in \mathcal P_1(\mathbb R^d)$, there exists a sequence of absolutely continuous curves $\{\mu_n(\cdot)\}_{n\in\mathbb N} \subseteq \AC([0,T], \Pcal_1(\mathbb R^d))$ satisfying $\mu_n(0) = \mu_0$ for each $n \in \N$, along with the following. 
\begin{itemize}
\item[$(a)$] For each $n\in\mathbb N$, there exist $\epsilon_n \in(0,1)$ and an open set $\Opazo_n \subseteq [0,T]$ {of the form
\begin{equation*}
\Opazo_n = \bigcup_{a_i^n \in C_{\epsilon_n}} (a_i^n,b_i^n)
\end{equation*}}
such that $\Lcal^1([0,T]\setminus\Opazo_n)\le 1/n$. Moreover, the curve $\mu_n(\cdot) \in \AC([0,T],\Pcal_1(\R^d))$ solves the continuity inclusion
\begin{equation*}
-\partial_t \mu_n(t) \in \Div_x \Big( V \big(t,\mathbb B(\mu_n(t), \tfrac{1}{n})\big)\, \mu_n(t)\Big)
\end{equation*}
over $\Opazo_n$, namely there exists an integrable selection $t \in [0,T] \mapsto v_n(t) \in V\big(t,\B(\mu_n(t),1/n)\big)$ such that
\begin{equation*}
\int_0^T \int_{\mathbb R^d}\Big( \partial_t\varphi(t,x) + \langle \nabla_x \varphi(t,x) , v_n(t,x) \rangle\Big) \mathrm{d}\mu_n(t)(x)\,\mathrm{d}t = 0
\end{equation*}
for every $\varphi\in C_c^\infty(\Opazo_n \times\mathbb R^d)$.
\item[$(b)$] The family $\{\mu_n(\cdot)\}_{n\in\mathbb N}\subseteq \AC([0,T],\mathcal P_1(\mathbb R^d))$ complies with the uniform moment and absolute continuity estimates
\begin{equation*}
\quad \quad \quad \sup_{n \in \N} \Mpazo_1(\mu_n(t)) \leq c_T \quad \text{and} \quad \sup_{n\in\mathbb N}W_1(\mu_n(s),\mu_n(t)) \le \int_{s}^t\max\Big\{2(1+c_T) M(\theta),M_{\Qpazo}(\theta)\Big\}\,\mathrm{d}\theta
\end{equation*}
for all times $0\le s\le t\le T$, where $c_T > 0$ is given as in \eqref{eq:cTDef}. 
\item[$(c)$] It holds that 
\begin{equation*}
W_1(\mu_n(t) \,; \Qpazo(t)) ~\underset{n \to +\infty}{\longrightarrow} 0
\end{equation*}
for all times $t\in[0, T]$. 	
\end{itemize}
\end{lemma}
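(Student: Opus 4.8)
This lemma is essentially a synthesis of the machinery built throughout Section \ref{fas}, so the plan is to read off its three conclusions from the properties of globally defined maximal triples. First, for each $n \in \N$ I would choose $\epsilon_n := \min\{\epsilon_0/2, 1/(n r_T)\}$, which lies in $(0,\epsilon_0)$ and satisfies both $\epsilon_n \leq 1/n$ and -- crucially -- $r_T\epsilon_n \leq 1/n$, the latter because $r_T \geq 4$ by \eqref{r}. Invoking Corollary \ref{corapp} with this choice produces a countable family of intervals $\{[a_i^n,b_i^n)\}_{i\in\Lambda_n}$ and a curve $\mu_n(\cdot) \in \AC([0,T],\Pcal_1(\R^d))$ with $(T,\{[a_i^n,b_i^n)\}_{i\in\Lambda_n},\mu_n(\cdot)) \in \Fpazo_{\epsilon_n}$ and $\mu_n(0)=\mu_0$ (the latter by \ref{hyp:PT}-$(ii)$); this is the sequence I would work with.

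Items $(b)$ and $(c)$ should then be immediate. The uniform moment bound follows by evaluating the estimate of \ref{hyp:PT}-$(ii)$ at $t=T$ and recognising the constant $c_T$ of \eqref{eq:cTDef}, while the uniform modulus of absolute continuity is exactly Proposition \ref{prop:Estimates}-$(b)$, which is $n$-independent since $M(\cdot)$ and $M_{\Qpazo}(\cdot)$ are fixed. For $(c)$ I would apply Corollary \ref{corclsvia} with $\tau=T$ and $\epsilon=\epsilon_n$: the resulting bound on $W_1(\mu_n(t)\,;\Qpazo(t))$ for $t \in [\epsilon_n,T]$ consists of an integral of the fixed $L^1$ function $\max\{2(1+c_T)M(\cdot),M_{\Qpazo}(\cdot)\}$ over an interval of length $\epsilon_n \to 0$, which vanishes by absolute continuity of the Lebesgue integral, plus a term dominated by $T\exp(\int_0^T M(s)\,\mathrm{d}s)\,\epsilon_n \to 0$. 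This covers $t \in (0,T]$, and the case $t=0$ is trivial because $\mu_n(0)=\mu_0 \in \Qpazo(0)$.

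The substantive part is $(a)$. I would set $\Opazo_n := \bigcup_{a_i^n \in C_{\epsilon_n}}(a_i^n,b_i^n)$, which is open, and bound its complement as follows: by \ref{hyp:PT}-$(i)$ the intervals $\{[a_i^n,b_i^n)\}$ tile $[0,T)$, and by \ref{hyp:PT}-$(iv)$ each of them with $a_i^n \notin C_{\epsilon_n}$ lies in $[0,T]\setminus C_{\epsilon_n}$, so $[0,T]\setminus\Opazo_n$ is contained in $[0,T]\setminus C_{\epsilon_n}$ up to the countable (hence Lebesgue-negligible) set of interval endpoints; by \eqref{eq:Ceps} this yields $\Lcal^1([0,T]\setminus\Opazo_n) \leq \epsilon_n \leq 1/n$. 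For the global velocity selection I would glue: on each $(a_i^n,b_i^n)$ with $a_i^n \in C_{\epsilon_n}$ take the selection $v_i^n$ with values in $V(\cdot,\B(\mu_n(\cdot),r_T\epsilon_n))$ furnished by \ref{hyp:PT}-$(iii)$, and on $[0,T]\setminus\Opazo_n$ take an integrable selection $\tilde v_n$ with values in $V(\cdot,\mu_n(\cdot))$, which exists by Corollary \ref{cors}. The patched map $v_n$ is measurable (being a countable patching) and integrable in the sense of Definition \ref{def:IntegralC0}, since Hypothesis \ref{hyp:USC}-$(ii)$ combined with $(b)$ and the crude bound $\Mpazo_1(\nu) \leq \Mpazo_1(\mu_n(t)) + 1/n \leq c_T+1$ valid for $\nu \in \B(\mu_n(t),1/n)$ gives $|v_n(t,x)| \leq M(t)(2+c_T+|x|)$; moreover $r_T\epsilon_n \leq 1/n$ forces $\B(\mu_n(t),r_T\epsilon_n) \cup \{\mu_n(t)\} \subseteq \B(\mu_n(t),1/n)$, whence, by monotonicity of $V(t,\cdot)$ with respect to set inclusion, $v_n(t) \in V(t,\B(\mu_n(t),1/n))$ for $\Lcal^1$-almost every $t$.

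It then remains to verify the weak continuity equation for test functions supported in $\Opazo_n \times \R^d$, and I expect this localisation step -- together with the bookkeeping above -- to be the only genuinely delicate point. Given $\varphi \in C_c^\infty(\Opazo_n\times\R^d)$, the projection of $\supp\varphi$ onto the time axis is a compact subset of the open set $\Opazo_n$, hence it meets only finitely many of the pairwise disjoint intervals $(a_i^n,b_i^n)$, say those indexed by $i_1,\dots,i_m$; writing $\varphi$ as the sum of its restrictions to the corresponding slabs (extended by zero), each of which belongs to $C_c^\infty((a_{i_k}^n,b_{i_k}^n)\times\R^d)$, and using that $v_n$ agrees with $v_{i_k}^n$ on $(a_{i_k}^n,b_{i_k}^n)$, one adds the weak identities provided by \ref{hyp:PT}-$(iii)$ over $k=1,\dots,m$ to obtain $\int_0^T \int_{\R^d}( \partial_t\varphi + \langle\nabla_x\varphi,v_n\rangle)\,\mathrm{d}\mu_n(t)(x)\,\mathrm{d}t = 0$. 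All the remaining requirements being direct readouts of the results of Section \ref{fas}, this completes the plan.
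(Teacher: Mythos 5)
Your proposal is correct and follows essentially the same route as the paper: invoke Corollary \ref{corapp} with $\epsilon_n$ of order $1/(n r_T)$, take $\Opazo_n$ to be the union of the intervals whose left endpoints lie in $C_{\epsilon_n}$, glue the selections from \ref{hyp:PT}-$(iii)$ (using $r_T\epsilon_n\le 1/n$) with those provided by Corollary \ref{cors} on the remaining intervals, and read off items $(b)$ and $(c)$ from Proposition \ref{prop:Estimates} and Corollary \ref{corclsvia}. The only cosmetic differences are that the paper additionally constructs the compact sets $C_{\epsilon_n}\subseteq E$ by inner regularity, folds a modulus $\delta_n$ into the definition of $\epsilon_n$ so that the feasibility gap in $(c)$ is explicitly $O(1/n)$ (you instead let the absolute continuity of the Lebesgue integral do this for each fixed $t$), and pads the sequence for small $n$ by repeating $\mu_N(\cdot)$ -- a step your choice of $\epsilon_n$ makes unnecessary.
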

\begin{proof}
Let $E_1 \subseteq [0,T]$ be the set of Lebesgue points of $M(\cdot) \in L^1([0,T],\mathbb R_+)$, let further $E_2 \subseteq [0,T]$ be the set of points where \eqref{eq:IntegralViabilityCond} is satisfied, and set $E:= E_1\cap E_2$. Fix also $\varepsilon_0>0$ for which (\ref{absint}) holds, and choose $N\in\mathbb N$ large enough so that $(1/n r_T) < \varepsilon_0$ for all $n\ge N$,  where $r_T >0$ is given in \eqref{r}. Moreover, let $\delta_n > 0$ be choosen in such a way that 
\begin{equation*}
\INTDom{\max\Big\{2(1+c_T) M(\theta) , M_{\Qpazo}(\theta) \Big\}}{A}{\theta} \leq \frac{1}{n} \qquad \text{whenever} \qquad \Lcal^1(A) \leq \delta_n,  
\end{equation*}
and set then $\varepsilon_n := \min\{1/(r_T n) , \delta_n\}$. By the inner regularity of the the Lebesgue measure, there exists an increasing sequence of compact sets $\{C_{\epsilon_n}\}_{n\in\mathbb N}$ such that $C_{\epsilon_n} \subseteq E$ and 
\begin{equation*}
\Lcal^1([0,T] \setminus C_{\epsilon_n}) \le \epsilon_n
\end{equation*}
for each $n \in \N$. Then, by Corollary \ref{corapp} above there exists for each $n\ge N$ a countable family of intervals $\{[a^n_i,b^n_i)\}_{i\in\Lambda_n}$ and a curve $\mu_n(\cdot) \in \AC([0,T],\Pcal_1(\R^d))$ such that $(T,\{[a^n_i,b^n_i)\}_{i\in\Lambda_n},\mu_n(\cdot)) \in \Fpazo_{\varepsilon_n}$. Consider now the open set $\Opazo_n \subseteq [0,T]$ given by
\begin{equation*}
\Opazo_n:= \bigcup_{a_i^n\in C_{\varepsilon_n}} (a^n_i,b^n_i), 
\end{equation*}
which by \ref{hyp:PT}-$(iv)$ can be shown to satisfy
\begin{equation*}
\begin{aligned}
\Lcal^1([0,T]\setminus\Opazo_n) & = \Lcal^1 \bigg( \bigcup_{a_i^n\notin C_{\varepsilon_n}} [a^n_i,b^n_i] \bigg) \\
& \leq \Lcal^1([0,T]\setminus  C_{\varepsilon_n}) \le \frac{1}{n}
\end{aligned}
\end{equation*}
for all $n\ge N$. At this stage, it can be checked that the claims made in $(b)$ and $(c)$ follow directly Proposition \ref{prop:Estimates} and Corollary \ref{corclsvia}, respectively. Regarding $(a)$, we know from the properties \ref{hyp:PT} satisfied by each of the triples $(T,\{[a^n_i,b^n_i)\}_{i\in\Lambda_n},\mu_n(\cdot))$ that for every $i \in \Lambda_n$ with 
$a_i^n \in C_{\epsilon_n}$, there exists an integrable selection $t \in [a_i^n,b_i^n] \mapsto v_n^i(t) \in V\big(t,\B(\mu_n(t),1/n)\big)$ such that 
\begin{equation*}
\int_0^T \int_{\mathbb R^d}\Big( \partial_t\varphi(t,x) + \langle \nabla_x \varphi(t,x) , v_n^i(t,x) \rangle\Big) \mathrm{d}\mu_n(t)(x)\,\mathrm{d}t = 0
\end{equation*}
holds for each $\varphi\in C^{\infty}_c((a_i,b_i) \times \R^d)$. For $i\in \Lambda$ with $a_{i}^n\notin C_{\varepsilon_n}$, we may leverage Corollary \ref{cors} to get the existence of an integrable selection $t \in [a_i^n, b_i^n] \mapsto v^i_n \in V(t,\mu_n(t))$, and it can then be verified that the velocity maps defined by 
\begin{equation*}
v_n(t) := \sum_{i \in \Lambda_n} \mathds{1}_{[a_i^n,b_i^n)}(t) \, v_n^i(t)
\end{equation*}
for $\Lcal^1$-almost every $t \in [0,T]$ and each $n \geq 1$ satisfy the requirements of item $(a)$. For $n\le N$, we simply let $\mu_n(\cdot):= \mu_N(\cdot)$, and note that the sequence $\{\mu_n(\cdot)\}_{n \in \N} \subseteq\AC([0,T],\Pcal_1(\R^d))$ clearly verifies items $(a)$, $(b)$ and $(c)$ by construction.
\end{proof}


\subsection{Proof of Theorem \ref{Thm4}}\label{ssusc}

Let $\{\mu_n(\cdot)\}_{n\in\mathbb N} \subseteq \AC([0,T],\Pcal_1(\R^d))$ be a sequence of curves satisfying the conditions of Lemma \ref{techlem} above. We are going to show that the latter admits a uniformly converging subsequence, whose limit solves the continuity inclusion \eqref{CIUC} and is viable for $\Qpazo : [0,T] \rightrightarrows \Pcal_1(\R^d)$. 

\smallskip

\paragraph*{Step 1 -- Existence of a uniformly converging subsequence.} This first part will be proven via a diagonal argument. Let $\{\nu_n(\cdot)\}_{n\in\mathbb N}$ be a sequence of $\Lcal^1$-measurable maps $\nu_n:[0,T]\to\mathcal P_1(\mathbb R^d)$ such that
\begin{equation}
\label{eq:nudef}
\nu_n(t)\in\Qpazo(t) \qquad \text{and} \qquad	W_1(\mu_n(t),\nu_n(t)) = W_1(\mu_n(t) \,; \Qpazo(t))
\end{equation}
for $\Lcal^1$-almost every $t\in[0,T]$ and each $n\in\mathbb N$, whose existence is guaranteed by standard measurable selection principles (see e.g. \cite[Theorem 8.2.11]{Aubin1990}). Let $D:= \{t_k\}_{k\in\mathbb N}$ be a countable dense subset of $[0,T]$ at which \eqref{eq:nudef} holds. Recalling that $\Qpazo(t_1) \subseteq \Pcal_1(\R^d)$ is proper, whereas \eqref{eq:nudef} implies in particular that $\{\nu_n(t_1)\}_{n \in \N}$ is bounded, the latter admits a converging subsequence $\{\nu_{\sigma_1(n)}(t_1)\}_{n\in\mathbb N}$, where $\sigma_1:\mathbb N\to\mathbb N$ is an increasing function. Inductively, for any $\ell \in\mathbb N$, one may extract a converging subsequence $\{\nu_{\sigma_{\ell} \circ\cdots\circ\sigma_1(n)}(t_k)\}_{n\in\mathbb N}$ of $\{\nu_{\sigma_{\ell-1}\circ\cdots\circ\sigma_1(n)}(t_k)\}_{n\in\mathbb N}$, where the $\sigma_k:\mathbb N\to\mathbb N$ are increasing functions for $k \in \{1,\dots,\ell\}$. Observe in this case that the sequence $\{\nu_{\sigma_{\ell}\circ\cdots\circ\sigma_1(n)}(t_k)\}$ is convergent for each $k\in\{1,\dots, \ell\}$, and consider the diagonal extraction 
\begin{equation*}
\bar{\sigma}_n =\sigma_{n}\circ\cdots\circ\sigma_1(n).
\end{equation*}
At this stage, note that the sequence $\{\nu_{\bar{\sigma}_n}(t_k)\}_{n\in\mathbb N}$ admits a limit for each $k\in\mathbb N$, so in particular there exists a map $\mu:D\to \mathcal P_1(\mathbb R^d)$ such that 
\begin{equation}
\label{eq:nusigmaconv}
W_1(\nu_{\bar{\sigma}_n}(t),\mu(t)) \underset{n \to +\infty} \longrightarrow 0
\end{equation}
for each $t \in D$. Furthermore, for every such time, observe that  
\begin{equation*}
\begin{aligned}
W_1(\mu_{\bar{\sigma}_n}(t),\mu(t)) & \leq W_1(\mu_{\bar{\sigma}_n}(t) , \nu_{\bar{\sigma}_n}(t)\big) + W_1(\nu_{\bar{\sigma}_n}(t),\mu(t)) \\
& = W_1(\mu_{\bar{\sigma}_n}(t) \, ; \Qpazo(t)\big) + W_1(\nu_{\bar{\sigma}_n}(t),\mu(t))  \underset{n \to +\infty}{\longrightarrow} 0, 
\end{aligned}
\end{equation*}
where we used \eqref{eq:nusigmaconv} together with Lemma \ref{techlem}-$(c)$. 
Since $D \subseteq [0,T]$ is a dense subset in a compact metric space and the curves $\{\mu_{\bar{\sigma}_n}(\cdot)\}_{n \in \N}$ are uniformly equicontinuous by Lemma \ref{techlem}-$(b)$, it further holds that there exists a unique continuous extension $\mu(\cdot) \in C^0([0,T],\Pcal_1(\R^d))$ for which
\begin{equation}
\label{eq:Unieq:IntegralViabilityCondvCurve}
\sup_{t \in [0,T]} W_1(\mu_{\bar{\sigma}_n}(t),\mu(t)) ~\underset{n \to +\infty}{\longrightarrow}~ 0.
\end{equation}
In what follows to lighten the exposition, we will drop all explicit dependence on this subsequence, and simply write $n$ in place of $\bar{\sigma}_n$. 

\medskip

\paragraph*{Step 2 -- The limit curve solves a continuity equation.}  To begin with, observe that by Lemma \ref{techlem}-$(a)$, there exists for every $n \geq 1$ a function $\tilde{\mu}_n : [0,T] \to \Pcal_1(\R^d)$, which need not be measurable a priori, and an $\Lcal^1$-measurable map $v_n : [0,T] \to C^0(\R^d,\R^d)$ such that 
\begin{equation*}
W_1(\tilde{\mu}_n(t),\mu_n(t)) \leq \frac{1}{n} \qquad\text{and}\qquad v_n(t)\in V(t,\tilde{\mu}_n(t)) 
\end{equation*}
for $\Lcal^1$-almost every $t \in [0,T]$. Recall in addition that the distributional identity 
\begin{equation}
\label{eq:LimitCE0}
\INTSeg{\INTDom{\Big( \partial_t \varphi_n(t,x) + \langle \nabla_x \varphi_n(t,x) , v_n(t,x) \rangle \Big)}{\R^d}{\mu_n(t)(x)}}{t}{0}{T} = 0
\end{equation}
holds for each $n \geq 1$ and every $\varphi_n \in C^{\infty}_c(\Opazo_n \times \R^d)$. To prove finally that the limit curve $\mu(\cdot) \in \AC([0,T],\Pcal_1(\R^d))$ solves a continuity equation, we consider test functions $\varphi\in C_c^\infty((0,T)\times\mathbb R^d)$ of the specific form
\begin{equation}
\label{eq:TestFunction}   
\varphi(t,x) := \xi(t)\psi(x)
\end{equation}
for some $(\xi,\psi) \in C^\infty_c((0,T)) \times C_c^\infty(\mathbb R^d)$ and all $(t,x) \in [0,T] \times \R^d$. Then, observe that under Hypothesis \ref{hyp:USC}-$(ii)$ and the uniform moment bound from Lemma \ref{techlem}-$(b)$, one has that 
\begin{equation}
\label{eq:LimitCE1}
\begin{aligned}
& \bigg| \INTDom{\INTDom{\Big( \xi'(t) \psi(x) + \langle \xi(t) \nabla \psi(x) , v_n(t,x) \rangle \Big)}{\R^d}{\mu_n(t)(x)}}{[0,T] \setminus \Opazo_n}{t} \, \bigg| \\
& \hspace{1.8cm} \leq \NormC{\xi}{1}{(0,T)} \NormC{\psi}{1}{\R^d} \bigg( \Lcal^1([0,T] \setminus \Opazo_n) + (1+2c_T) \INTDom{M(t)}{[0,T] \setminus \Opazo_n}{t} \bigg) ~\underset{n \to +\infty}{\longrightarrow}~ 0. 
\end{aligned}
\end{equation}
Recalling now that the curve $\mu_n(\cdot) \in \AC([0,T],\Pcal_1(\R^d))$ takes the specific form 
\begin{equation*}
\mu_n(t) = \big( \Phi_{(a_i^n,t)}^{v_n} \big)_{\sharp} \mu(a_i^n)
\end{equation*}
for all times $t \in [a_i^n,b_i^n)$ with $a_i^n \in C_{\epsilon_n}$, one can show by a simple integration by parts that
\begin{equation*}
\begin{aligned}
\xi(b_i^n) \INTDom{ \psi(x)}{\R^d}{\mu_n(b_i^n)(x)} & = \xi(a_i^n) \INTDom{\psi(x)}{\R^d}{\mu_n(a_i^n)(x)} \\
& \hspace{0.45cm} + \INTSeg{\int_{\mathbb R^d}\Big( \xi'(t) \psi(x) + \big\langle \xi(t) \nabla \psi(x) , v_n(t,x) \big\rangle \Big) \mathrm{d} \mu_n(t)(x)}{t}{a_i^n}{b_i^n}
\end{aligned}
\end{equation*}
for every $i \in \Lambda_n$ such that $a_i^n \in C_{\epsilon_n}$. Since $\Opazo_n = \bigcup_{a_i^n \in C_{\epsilon_n}} (a_i^n,b_i^n)$ is made of pairwise disjoint intervals, the latter identity allows us to further infer that 
\begin{equation}
\label{eq:LimitCE21}
\begin{aligned}
& \bigg| \INTDom{\INTDom{\Big( \xi'(t) \psi(x) + \langle \xi(t) \nabla \psi(x) , v_n(t,x) \rangle \Big)}{\R^d}{\mu_n(t)(x)}}{\Opazo_n}{t} \, \bigg| \\
& \hspace{0.8cm} = \bigg| \sum_{a_i^n \in C_{\epsilon_n}} \INTSeg{\INTDom{\Big( \xi'(t) \psi(x) + \big\langle \xi(t) \nabla \psi(x) , v_n(t,x) \big\rangle \Big)}{\R^d}{\mu_n(t)(x)}}{t}{a_i^n}{b_i^n} \, \bigg| \\
& \hspace{0.8cm}  = \bigg| \sum_{a_i^n \in C_{\epsilon_n}} \bigg( \xi(b_i^n) \INTDom{ \psi(x)}{\R^d}{\mu_n(b_i^n)(x)} - \xi(a_i^n) \INTDom{\psi(x)}{\R^d}{\mu_n(a_i^n)(x)} \bigg) \bigg| \\
& \hspace{0.8cm} \leq \bigg| \sum_{i \in \Lambda_n} \bigg( \xi(b_i^n) \INTDom{ \psi(x)}{\R^d}{\mu_n(b_i^n)(x)} - \xi(a_i^n) \INTDom{\psi(x)}{\R^d}{\mu_n(a_i^n)(x)} \bigg) \bigg|  \\
& \hspace{1.3cm} + \bigg| \sum_{a_i^n \notin C_{\epsilon_n}} \bigg( \xi(b_i^n) \INTDom{ \psi(x)}{\R^d}{\mu_n(b_i^n)(x)} - \xi(a_i^n) \INTDom{\psi(x)}{\R^d}{\mu_n(a_i^n)(x)} \bigg) \bigg|
\end{aligned}
\end{equation}
where, in the last inequality, we added and subtracted the sum for $a_i^n \notin C_{\epsilon_n}$ and used the triangle inequality. Upon recalling that the intervals $\{[a_i^n,b_i^n)\}_{i \in \Lambda_n}$ are pairwise disjoint and such that $\bigcup_{i \in \Lambda_n} [a_i^n,b_i^n) = [0,T)$, one may leverage the estimates of Lemma \ref{techlem}-$(b)$ to show that 
\begin{equation*}
\begin{aligned}
& \sum_{i \in \Lambda_n} \bigg| \, \xi(b_i^n) \INTDom{ \psi(x)}{\R^d}{\mu_n(b_i^n)(x)} - \xi(a_i^n) \INTDom{\psi(x)}{\R^d}{\mu_n(a_i^n)(x)} \bigg| \\
& \hspace{0.8cm} \leq \NormC{\xi}{1}{(0,T)} \NormC{\psi}{1}{\R^d} \sum_{i \in \Lambda_n} \bigg( (b_i^n-a_i^n) + \INTSeg{\max\Big\{ 2(1+c_T) M(t) , M_{\Qpazo}(t) \Big\}}{t}{a_i^n}{b_i^n} \bigg) \\
& \hspace{0.8cm} = \NormC{\xi}{1}{(0,T)} \NormC{\psi}{1}{\R^d} \bigg( T + \INTSeg{\max\Big\{ 2(1+c_T) M(t) , M_{\Qpazo}(t) \Big\}}{t}{0}{T} \bigg) < +\infty,  
\end{aligned}
\end{equation*}
from whence we deduce that the above series indexed by $\Lambda_n$ is absolutely convergent. It then follows from a simple reorganisation of its terms that 
\begin{equation}
\label{eq:LimitCE22}
\begin{aligned}
& \sum_{i \in \Lambda_n} \bigg( \xi(b_i^n) \INTDom{ \psi(x)}{\R^d}{\mu_n(b_i^n)(x)} - \xi(a_i^n) \INTDom{\psi(x)}{\R^d}{\mu_n(a_i^n)(x)} \bigg) \\
& \hspace{2.75cm} = \xi(T) \INTDom{ \psi(x)}{\R^d}{\mu_n(T)(x)} - \xi(0) \INTDom{\psi(x)}{\R^d}{\mu_n(0)(x)} = 0
\end{aligned}
\end{equation}
since $\xi \in C^{\infty}_c((0,T))$ by assumption. Thence, upon merging \eqref{eq:LimitCE21} and \eqref{eq:LimitCE22}, we further obtain 
\begin{equation}
\label{eq:LimitCE23}
\begin{aligned}
& \bigg| \INTDom{\INTDom{\Big( \xi'(t) \psi(x) + \big\langle \xi(t) \nabla \psi(x) , v_n(t,x) \big\rangle \Big)}{\R^d}{\mu_n(t)(x)}}{\Opazo_n}{t} \, \bigg| \\
& \hspace{1cm} \leq \bigg| \sum_{a_i \notin C_{\epsilon_n}} \bigg( \xi(b_i^n) \INTDom{ \psi(x)}{\R^d}{\mu_n(b_i^n)(x)} - \xi(a_i^n) \INTDom{\psi(x)}{\R^d}{\mu_n(a_i^n)(x)} \bigg) \bigg| \\
& \hspace{1cm} \leq \NormC{\xi}{1}{(0,T)} \NormC{\psi}{1}{\R^d} \sum_{a_i \notin C_{\epsilon_n}} \INTSeg{\bigg( 1 + \max\Big\{ 2(1+c_T) M(t) , M_{\Qpazo}(t) \Big\} \bigg)}{t}{a_i^n}{b_i^n} \\
& \hspace{1cm} \leq \NormC{\xi}{1}{(0,T)} \NormC{\psi}{1}{\R^d} \INTDom{\bigg( 1 + \max\Big\{ 2(1+c_T) M(t) , M_{\Qpazo}(t) \Big\} \bigg)}{[0,T] \setminus \Opazo_n}{t} ~\underset{n \to +\infty}{\longrightarrow}~ 0,
\end{aligned}
\end{equation}
where we used again the estimates from Lemma \ref{techlem}-$(b)$ together with the fact that 
\begin{equation*}
\bigcup_{a_i^n \notin C_{\epsilon_n}} [a_i^n,b_i^n) \subseteq ([0,T] \setminus \Opazo_n)
\end{equation*}
for each $n \geq 1$. By combining \eqref{eq:LimitCE1} and \eqref{eq:LimitCE23}, we finally get that 
\begin{equation}
\label{eq:LimitCE0Bis}
\INTSeg{\INTDom{\Big( \partial_t \varphi(t,x) + \langle \nabla_x \varphi(t,x) , v_n(t,x) \rangle \Big)}{\R^d}{\mu_n(t)(x)}}{t}{0}{T} ~\underset{n \to +\infty}{\longrightarrow}~ 0
\end{equation}
for each $\varphi \in C^{\infty}_c((0,T) \times \R^d)$ of the form \eqref{eq:TestFunction}. Besides, it also follows from the optimal transport estimate \eqref{eq:WassEstBis} combined with Hypothesis \ref{hyp:USC}-$(ii)$ and Lemma \ref{techlem}-$(b)$ above that 
\begin{equation}
\label{eq:LimitCE3}
\begin{aligned}
& \bigg| \INTSeg{\INTDom{\Big( \partial_t \varphi(t,x) + \langle \nabla_x \varphi(t,x) , v_n(t,x) \rangle \Big)}{\R^d}{(\mu(t) - \mu_n(t))(x)}}{t}{0}{T} \, \bigg| \\
& \hspace{0cm} \leq \bigg( \NormC{\xi'}{0}{(0,T)} \Lip(\psi) + \NormC{\xi}{0}{(0,T)} \INTSeg{\Lip \big(\langle \nabla \psi , v_n(t) \rangle \big)}{t}{0}{T} \bigg) \sup_{t \in [0,T]} W_1(\mu_n(t),\mu(t)) ~\underset{n \to +\infty}{\longrightarrow}~ 0.
\end{aligned}
\end{equation}
At this stage, note that $\{v_n(\cdot)\}_{n \in \N}$ satisfies the assumptions of Lemma \ref{lem:comvf} as a consequence of Hypothesis \ref{hyp:USC}-$(ii)$. Hence, upon remarking that the map $t \in [0,T] \mapsto \Bnu_{\varphi}(t) \in \Mcal_c(\R^d,\R^d)$ defined through its action
\begin{equation*}
\langle \Bnu_{\varphi}(t) , v \rangle_{C^0(\R^d,\R^d)} := \xi(t) \INTDom{\langle \nabla \psi(x) , v(x) \rangle}{\R^d}{\mu(t)(x)}
\end{equation*}
for every $v \in C^0(\R^d,\R^d)$ and all times $t \in [0,T]$ is scalarly-$^*$ measurable, one may find a further subsequence that we de not relabel along with a measurable map $v : [0,T] \to C^0(\R^d,\R^d)$ such that
\begin{equation}
\label{eq:LimitCE4}
\INTSeg{\INTDom{\big\langle \nabla_x \varphi(t,x) , v(t,x) - v_n(t,x) \big\rangle}{\R^d}{\mu(t)(x)}}{t}{0}{T} ~\underset{n \to +\infty}{\longrightarrow}~ 0.
\end{equation}
Therefore, upon combining \eqref{eq:LimitCE0Bis}, \eqref{eq:LimitCE3} and \eqref{eq:LimitCE4}, we finally obtain that
\begin{equation*}
\INTSeg{\INTDom{\Big( \partial_t \varphi(t,x) + \langle \nabla_x \varphi(t,x) , v(t,x) \rangle \Big)}{\R^d}{\mu(t)(x)}}{t}{0}{T} = 0
\end{equation*}
for all $\varphi \in C^{\infty}_c((0,T) \times \R^d)$ of the form \eqref{eq:TestFunction}. Lastly, because the linear span of all such test functions is dense in $C^{\infty}_c((0,T) \times \R^d)$ (see e.g. \cite[Chapter 8]{AGS}), we may conclude that the curve $\mu(\cdot) \in \AC([0,T],\Pcal_1(\R^d))$ satisfies
\begin{equation*}
\left\{
\begin{aligned}
& \partial_t \mu(t) + \Div_x(v(t)\mu(t)) = 0, \\
& \mu(0) = \mu_0,
\end{aligned}
\right.
\end{equation*}
which is the desired claim. 

\medskip

\paragraph*{Step 3-- The limit curve satisfies the viability constraint and continuity inclusion.} To begin with, note that $\mu(\cdot) \in \AC([0,T],\Pcal_1(\R^d))$ satisfies the viability constraint as a simple consequence of Lemma \ref{techlem}-$(c)$ together with \eqref{eq:Unieq:IntegralViabilityCondvCurve}, since 
\begin{equation*}
W_1(\mu(t) \, ; \Qpazo(t)) \leq W_1(\mu(t),\mu_n(t)) + W_1(\mu_n(t) \, ; \Qpazo(t)) ~\underset{n \to +\infty}{\longrightarrow}~ 0 
\end{equation*}
for all times $t \in [0,T]$. Regarding the continuity inclusion, what we need to show is that $v(t) \in V(t,\mu(t))$ for $\Lcal^1$-almost every $t \in [0,T]$. We start by observing that the integrable selections $t \in [0,T] \mapsto v_n(t) \in V(t,\tilde{\mu}_n(t))$ are such that
\begin{equation*}
\INTSeg{\xi(t) \langle \Bnu , v_n(t) \rangle_{C^0(\R^d,\R^d)}}{t}{0}{T} ~\underset{n \to +\infty}{\longrightarrow}~ \INTSeg{\xi(t) \langle \Bnu , v(t) \rangle_{C^0(\R^d,\R^d)}}{t}{0}{T}
\end{equation*}
for every $\Bnu \in \Mcal_c(\R^d,\R^d)$ and each $\xi(\cdot) \in L^{\infty}([0,T],\R)$, as a simple consequence of Lemma \ref{lem:comvf}. Moreover, it holds for $\Lcal^1$-almost every $t \in [0,T]$ that 
\begin{equation*}
W_{1}(\tilde{\mu}_n(t),\mu(t)) \leq W_1(\tilde{\mu}_n(t),\mu_n(t)) + W_1(\mu_n(t),\mu(t)) ~\underset{n \to +\infty}{\longrightarrow} 0, 
\end{equation*} 
and it thus follows from the closure principle of Proposition \ref{prop:CastaingValadier} above that $v(t) \in V(t,\mu(t))$ for $\Lcal^1$-almost every $t \in [0,T]$, thereby closing the proof. \hfill $\square$

\begin{remark}[Concerning the regularity assumptions in Hypothesis \ref{hyp:USC}]
\label{rmk:Hyp}
As things stand, we do not expect the working assumptions of this section to be optimal. Indeed, similarly e.g. to the Peano existence result from \cite[Section 3]{ContIncPp}, one would expect that a good regularity framework on the driving fields should entail roughly the same behaviour in the space and measure variables. Here, that would mean working with velocity fields which are continuous (or ideally less regular) with respect to $x \in \R^d$, instead of Lipschitz. The difficulty in doing so lies in the fact that, contrarily to the setting of $\R^d$ in which evolving an ODE with a velocity depending only on time leaves constant the relative distance between two trajectories, the evolution of two measures under the same vector field starting from different initial conditions does not yield any nice distance estimate in general, except when the former is Lipschitz. This limitation stems directly from the fact that, although continuity equations behave essentially like ODEs, the Wasserstein spaces are not flat. Hence, to obtain general viability results under the expected minimal assumptions mimicking those of \cite{Frankowska1995}, one would likely need to translate and readapt the strategy detailed in Section \ref{section:USC} in terms of Wasserstein geometry \cite{AGS}, to have access to other manifold-like constructs such as exponential mappings or parallel transport (see e.g. \cite{Ambrosio2008,Gigli2012}), which might allow to recover sharp regularity assumptions. 
\end{remark}


\addcontentsline{toc}{section}{Appendices}
\section*{Appendices}


\setcounter{definition}{0}
\setcounter{section}{0}
\renewcommand{\thesection}{A} 
\renewcommand{\thesubsection}{A} 

\subsection{On the (lack of) superdifferentiability of the 1-Wasserstein distance}
\label{App1}

In this section, we illustrate the lack of superdifferentiability of the 1-Wasserstein distance. It is well-known (see e.g. \cite[Section 10.2]{AGS}) that the \(p\)-Wasserstein distance with \(p > 1\) supports nice joint superdifferentiability estimates, which play a pivotal role in the viability proofs of \cite{ViabPp}. In the following proposition, we provide an analogous, although substantially weaker estimate for the \(1\)-Wasserstein distance, as in \cite[Theorem 10.2.2]{AGS}, from whence superdifferentiability cannot be concluded. We also illustrate why the latter cannot be improved. In this context, recall that the $1$-duality mapping $j_{1}:\mathbb R^d\rightrightarrows\mathbb R^d$ is defined by 
\begin{equation*}
j_1(x):= \begin{cases*}
\displaystyle\frac{x}{|x|} & if $x\neq 0$, \\
\big\{s\in\mathbb R^d ~\,\textnormal{s.t.}~ |s|\le1\big\}      & if $x=0$.
\end{cases*}
\end{equation*}
for all $x \in \R^d$, and that any selection of this mapping is automatically Borel. 

\begin{proposition}[Lack of superdifferentiability for the 1-Wasserstein distance]
For all $\mu,\nu\in\mathcal P_1(\mathbb R^d)$, every $(\xi,\zeta)\in L^1(\mathbb R^d,\mathbb R^d;\mu) \times L^1(\mathbb R^d,\mathbb R^d;\nu)$ and each Borel map $z \in \R^d \mapsto s(z) \in j_1(z)$, it holds that
\begin{equation*}
\begin{aligned}
W_1\Big((\Id & + h \xi)_\sharp\mu, (\Id + h \zeta)_\sharp\nu\Big) - W_1(\mu,\nu) \\
& \le h \int_{\mathbb R^{2d}}\langle \xi(x) - \zeta(y) , s(x-y) \rangle \,\mathrm{d}\gamma(x,y) + 2h \int_{\mathbb R^{2d}} |\xi(x) - \zeta(y)|\,\mathrm{d}\gamma (x,y)
\end{aligned}
\end{equation*}
for all $h\in\mathbb R$ and any $\gamma \in \Gamma_{o}(\mu,\nu)$. Moreover, let $\xi:\mathbb R^d\to \mathbb R^d$ and $\zeta:\mathbb R^d\to\mathbb R^d$ be Borel maps for which there exists some $\bar{x}\in \mathbb R^d$ such that $\xi(\bar{x}) \neq \zeta(\bar{x})$. Then, there exist $\mu,\nu\in\mathcal P_1(\mathbb R^d)$ such that 
\begin{equation*}
\liminf_{h\to0^+}\frac{1}{h} \bigg( W_1\Big((\Id + h \xi)_\sharp\mu, (\Id + h \zeta)_\sharp\nu\Big) - W_1(\mu,\nu) - h \displaystyle\int_{\mathbb R^{2d}}\langle \xi(x) - \zeta(y), s(x-y)\rangle\mathrm{d}\gamma(x,y) \bigg) > 0
\end{equation*}
for every Borel selection $z \in \R^d \mapsto s(z) \in j_1(z)$ satisfying $s(0) < 1$.
\end{proposition}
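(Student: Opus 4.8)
The plan is to establish the two assertions separately, each by a short and essentially elementary argument. For the upper estimate I would fix an optimal plan $\gamma\in\Gamma_o(\mu,\nu)$ and exploit the fact that $(\Id+h\xi,\Id+h\zeta)_\sharp\gamma$ is an admissible (generally non-optimal) coupling of $(\Id+h\xi)_\sharp\mu$ and $(\Id+h\zeta)_\sharp\nu$, so that
\begin{equation*}
W_1\big((\Id+h\xi)_\sharp\mu,(\Id+h\zeta)_\sharp\nu\big)\le\int_{\mathbb R^{2d}}\big|x-y+h(\xi(x)-\zeta(y))\big|\,\mathrm{d}\gamma(x,y).
\end{equation*}
The one ingredient requiring any thought is the pointwise inequality $|a+hb|\le|a|+h\langle b,s\rangle+2h|b|$, valid for all $a,b\in\mathbb R^d$, every $h\ge0$ and every $s\in j_1(a)$ (including the case $a=0$), which I would derive by combining the triangle inequality $|a+hb|\le|a|+h|b|$ with the Cauchy--Schwarz bound $h|b|-h\langle b,s\rangle\le 2h|b|$. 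Applying this with $a=x-y$, $b=\xi(x)-\zeta(y)$ and $s=s(x-y)$, then integrating against $\gamma$ and using $\int_{\mathbb R^{2d}}|x-y|\,\mathrm{d}\gamma=W_1(\mu,\nu)$, yields the stated bound; for negative $h$ the coefficient $2h$ has to be read as $2|h|$.

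The second assertion is the conceptually meaningful one, and the key observation is that the obstruction to superdifferentiability is already visible at a single point, so it suffices to take $\mu=\nu=\delta_{\bar x}$, which plainly belong to $\mathcal P_1(\mathbb R^d)$. Then $W_1(\mu,\nu)=0$, the unique coupling $\gamma=\delta_{(\bar x,\bar x)}$ is optimal, $(\Id+h\xi)_\sharp\mu=\delta_{\bar x+h\xi(\bar x)}$ and $(\Id+h\zeta)_\sharp\nu=\delta_{\bar x+h\zeta(\bar x)}$, whence
\begin{equation*}
W_1\big((\Id+h\xi)_\sharp\mu,(\Id+h\zeta)_\sharp\nu\big)=|h|\,\big|\xi(\bar x)-\zeta(\bar x)\big|,\qquad \int_{\mathbb R^{2d}}\langle\xi(x)-\zeta(y),s(x-y)\rangle\,\mathrm{d}\gamma=\big\langle\xi(\bar x)-\zeta(\bar x),s(0)\big\rangle.
\end{equation*}
For $h>0$ the quantity inside the $\liminf$ therefore collapses to the $h$-independent number $|\xi(\bar x)-\zeta(\bar x)|-\langle\xi(\bar x)-\zeta(\bar x),s(0)\rangle$; since $\xi(\bar x)\neq\zeta(\bar x)$ and $|s(0)|<1$, Cauchy--Schwarz gives $\langle\xi(\bar x)-\zeta(\bar x),s(0)\rangle\le|s(0)|\,|\xi(\bar x)-\zeta(\bar x)|<|\xi(\bar x)-\zeta(\bar x)|$, so the $\liminf$ is bounded below by $(1-|s(0)|)\,|\xi(\bar x)-\zeta(\bar x)|>0$. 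Crucially, the pair $(\mu,\nu)$ chosen does not depend on $s$, so the same measures work simultaneously for every admissible Borel selection with $|s(0)|<1$, which is what the statement demands.

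I do not anticipate any genuine obstacle here: the remaining work is routine bookkeeping --- confirming that the pushforwards of $\delta_{\bar x}$ by the Borel maps $\Id+h\xi$ and $\Id+h\zeta$ are the evident Dirac masses, keeping track of the sign of $h$ in the elementary inequality of the first part, and noting that a single pair of Dirac masses suffices uniformly in $s$. The point worth emphasising is that $j_1(0)$ is the whole closed unit ball, so any selection with $|s(0)|<1$ systematically underestimates the jump $|\xi(\bar x)-\zeta(\bar x)|$ produced by displacing two superimposed Dirac masses in non-parallel directions --- this is precisely the mechanism by which $W_1$ fails to be superdifferentiable, in contrast with \cite[Theorem 10.2.2]{AGS}.
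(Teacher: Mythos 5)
Your proposal is correct and follows essentially the same route as the paper: the upper bound via the suboptimal coupling $(\Id+h\xi,\Id+h\zeta)_\sharp\gamma$ together with the pointwise inequality $|a+hb|-|a|-h\langle b,s\rangle\le 2h|b|$ for $s\in j_1(a)$, and the counterexample $\mu=\nu=\delta_{\bar x}$ reducing the $\liminf$ to $|\xi(\bar x)-\zeta(\bar x)|-\langle\xi(\bar x)-\zeta(\bar x),s(0)\rangle>0$ when $|s(0)|<1$. Your remarks that $2h$ must be read as $2|h|$ for $h<0$ and that the Dirac pair works uniformly in $s$ are sensible clarifications of points the paper leaves implicit, but they do not change the argument.
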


\begin{proof}
To derive the first estimate, it suffices to observe that from the reverse triangle and the Cauchy-Schwartz inequality, one may show that
\begin{equation*}
|y| - |x| - \langle y - x, s \rangle \le 2|y-x|
\end{equation*} 
for all $x,y\in\mathbb R^d$ and any $s\in j_1(x)$. The claim follows easily from the previous inequality by a simple adaptation e.g. of the proof of \cite[Proposition 2.3]{ViabPp}. Concerning the second inequality, take $\bar{x} \in \R^d$ such that $\xi(\bar{x}) \neq \zeta(\bar{x})$, choose simply $\mu = \nu = \delta_{\bar{x}}$, and observe that 
\begin{equation*}
\begin{aligned}
\liminf_{h\to0^+}\frac{1}{h} \bigg( W_1\Big((\Id + h \xi)_\sharp\mu, (\Id + h \zeta)_\sharp\nu\Big) - W_1(\mu,\nu) & - h \displaystyle\int_{\mathbb R^{2d}}\langle \xi(x) - \zeta(y), s(x-y)\rangle\mathrm{d}\gamma(x,y) \bigg) \\
&  = (\xi(\bar{x}) - \zeta(\bar{x})) \Big( \sign(\xi(\bar{x}) - \zeta(\bar{x})) - s(0) \Big) > 0
\end{aligned}
\end{equation*}
provided $s(0) < 1$, hence the conclusion. 
\end{proof}


\setcounter{definition}{0}
\setcounter{section}{0}
\renewcommand{\thesection}{B} 
\renewcommand{\thesubsection}{B} 

\subsection{On the necessity of absolute left or right continuity for the viability of time-dependent constraints}
\label{App2}

In this second appendix, we show that if an $\Lcal^1$-measurable set-valued mapping $\Qpazo:[0,T]\rightrightarrows\mathcal P_1(\mathbb R^d)$ is forward or backward viable for the continuity inclusion
\begin{equation}
\label{eq:ContIncApp}
\partial_t \mu(t) \in - \Div_x \Big( V(t,\mu(t)) \, \mu(t) \Big), 
\end{equation}
then it is necessarily left or right absolutely continuous. 

\begin{proposition}[Necessary regularity conditions for viable maps]	Let $V:[0,T]\times\mathcal P_1(\mathbb R^d)\rightrightarrows C^0\big(\mathbb R^d,\mathbb R^d\big)$ be a set-valued mapping such that $t \in [0,T] \mapsto V(t,\mu)$ is $\Lcal^1$-measurable for all $\mu \in \Pcal_1(\R^d)$, and suppose that there exists a function $M(\cdot)\in L^1([0,T],\R_+)$ such that $\Lcal^1$-almost every $t \in [0,T]$, one has that 
\begin{equation*}
|v(x)| \le M(t) \Big( 1 + |x| + \Mpazo_1(\mu) \Big) 
\end{equation*}
for all $x \in \R^d$ and every $(\mu,v) \in \Graph(V(t))$. Then, the following hold. 
\begin{itemize}
\item[$(a)$] If for every $\tau\in[0,T]$ and any $\mu_{\tau}\in \Qpazo(\tau)$ there exists a solution of \eqref{eq:ContIncApp} satisfying 
\begin{equation*}
\mu(\tau) = \mu_{\tau} \qquad \text{and} \qquad \mu(t)\in\Qpazo(t) \quad \text{for all times $t \in [\tau,T]$}, 
\end{equation*}
then $\Qpazo:[0,T]\rightrightarrows\mathcal P_1(\mathbb R^d)$ is left absolutely continuous.
\item[$(b)$] If for  every $\tau\in[0,T]$ and any $\mu_{\tau}\in \Qpazo(\tau)$ there exists a solution of \eqref{eq:ContIncApp} satisfying 
\begin{equation*}
\mu(\tau) = \mu_{\tau} \qquad \text{and} \qquad \mu(t)\in\Qpazo(t) \quad \text{for all times $t \in [0,\tau]$}, 
\end{equation*}
then $\Qpazo:[0,T]\rightrightarrows\mathcal P_1(\mathbb R^d)$ is right absolutely continuous.
\item[$(c)$] If for  every $\tau\in[0,T]$ and any $\mu_{\tau}\in \Qpazo(\tau)$ there exists a solution of \eqref{eq:ContIncApp} satisfying 
\begin{equation*}
\mu(\tau) = \mu_{\tau} \qquad \text{and} \qquad \mu(t)\in\Qpazo(t) \quad \text{for all times $t \in [0,T]$},  
\end{equation*}
then $\Qpazo:[0,T]\rightrightarrows\mathcal P_1(\mathbb R^d)$ is absolutely continuous.
\end{itemize}
\end{proposition}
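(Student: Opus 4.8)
The plan is to obtain (c) as an immediate consequence of (a) and (b), and to prove (a) and (b) by one and the same mechanism: use the viability hypothesis to produce, for each base measure inside $\Qpazo(s) \cap \B(\nu_0,r)$, a genuine trajectory of the dynamics landing in $\Qpazo(t)$, and then bound the length of that trajectory by the basic estimates \eqref{eq:BasicEstimates} together with a Gr\"onwall argument controlling first moments. Throughout, the only structural input used is the sublinearity bound $|v(x)| \le M(t)(1 + |x| + \Mpazo_1(\mu))$, which is exactly what makes $\NormL{v(\theta)}{1}{\R^d,\R^d;\,\mu(\theta)} \le M(\theta)(1 + 2\Mpazo_1(\mu(\theta)))$ for any solution with generating velocity $v(\cdot)$.

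For item $(a)$, fix $\nu_0 \in \Pcal_1(\R^d)$ and $r > 0$, and take $0 \le s \le t \le T$. If $\Qpazo(s) \cap \B(\nu_0,r) = \emptyset$ then $\Delta_{\nu_0,r}(\Qpazo(s),\Qpazo(t)) = 0$ by the convention in \eqref{eq:HausdorffSemiDist} and there is nothing to prove, so pick $\mu_s \in \Qpazo(s) \cap \B(\nu_0,r)$. Forward viability yields a solution $\mu(\cdot)$ of \eqref{eq:ContIncApp} on $[s,T]$ with $\mu(s) = \mu_s$ and $\mu(\theta) \in \Qpazo(\theta)$ for all $\theta \in [s,T]$; evaluating at $t$ gives $W_1(\mu_s \,; \Qpazo(t)) \le W_1(\mu(s),\mu(t))$. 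The moment inequality in \eqref{eq:BasicEstimates} then reads $\Mpazo_1(\mu(\theta)) \le \Mpazo_1(\mu_s) + \int_s^{\theta} M(\sigma)(1 + 2\Mpazo_1(\mu(\sigma)))\,\mathrm{d}\sigma$, and since $\Mpazo_1(\mu_s) \le \Mpazo_1(\nu_0) + r$ because $\mu_s \in \B(\nu_0,r)$, Gr\"onwall's lemma produces the uniform bound $\Mpazo_1(\mu(\theta)) \le C_r := (\Mpazo_1(\nu_0) + r + \|M\|_{L^1([0,T],\R_+)})\exp(2\|M\|_{L^1([0,T],\R_+)})$ for all $\theta \in [s,T]$, independently of $s$ and of the chosen $\mu_s$. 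Substituting this into the distance estimate of \eqref{eq:BasicEstimates} gives $W_1(\mu(s),\mu(t)) \le (1 + 2C_r)\int_s^t M(\theta)\,\mathrm{d}\theta$, hence $\Delta_{\nu_0,r}(\Qpazo(s),\Qpazo(t)) \le \int_s^t m_{\nu_0,r}(\theta)\,\mathrm{d}\theta$ with $m_{\nu_0,r} := (1 + 2C_r)M(\cdot) \in L^1([0,T],\R_+)$, which is precisely left absolute continuity of $\Qpazo$.

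For item $(b)$ the argument is symmetric: given $\mu_t \in \Qpazo(t) \cap \B(\nu_0,r)$, backward viability supplies a solution $\mu(\cdot)$ on $[0,t]$ with $\mu(t) = \mu_t$ and $\mu(\theta) \in \Qpazo(\theta)$ for $\theta \in [0,t]$, so $W_1(\mu_t \,; \Qpazo(s)) \le W_1(\mu(s),\mu(t))$ whenever $s \le t$. The one adjustment is that the moment bound must now be propagated backward from $\theta = t$; from $|\Mpazo_1(\mu(\theta_1)) - \Mpazo_1(\mu(\theta_2))| \le W_1(\mu(\theta_1),\mu(\theta_2)) \le \int_{\theta_1}^{\theta_2} M(\sigma)(1 + 2\Mpazo_1(\mu(\sigma)))\,\mathrm{d}\sigma$ (valid since $\mu(\cdot)$ solves the continuity equation on $[0,t]$ with its generating velocity) one gets $\Mpazo_1(\mu(\theta)) \le \Mpazo_1(\mu_t) + \|M\|_{L^1} + 2\int_{\theta}^{t} M(\sigma)\Mpazo_1(\mu(\sigma))\,\mathrm{d}\sigma$, and a backward application of Gr\"onwall's lemma — or, equivalently, replacing $\mu(\cdot)$ by the time-reversed curve $\tilde\mu(\theta) := \mu(t-\theta)$, which solves a continuity equation with velocity of the same sublinear type, and running the forward argument of $(a)$ — again yields $\Mpazo_1(\mu(\theta)) \le C_r$ on $[0,t]$. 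The rest of the estimate is verbatim the one in $(a)$, giving right absolute continuity. Finally, under the hypothesis of $(c)$ the map $\Qpazo$ satisfies simultaneously the hypotheses of $(a)$ and $(b)$, so it is both left and right absolutely continuous, i.e. absolutely continuous.

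I do not anticipate a genuine obstacle here; the points that require a little attention are the uniformity of the first-moment bound $C_r$ over all base points and over all $s$ (resp. $t$), which holds because $C_r$ depends only on $\Mpazo_1(\nu_0)$, $r$ and $\|M\|_{L^1([0,T],\R_+)}$, and, in $(b)$, correctly orienting Gr\"onwall's lemma in time (or invoking the time-reversal symmetry of the continuity equation so that the forward estimates of \eqref{eq:BasicEstimates} remain applicable). It is also worth recording explicitly that the edge case $\Qpazo(s) \cap \B(\nu_0,r) = \emptyset$ (resp. $\Qpazo(t) \cap \B(\nu_0,r) = \emptyset$) is handled trivially by the convention attached to \eqref{eq:HausdorffSemiDist}.
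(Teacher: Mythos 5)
Your proposal is correct and follows essentially the same route as the paper: use the viable trajectory issued from each $\mu_s\in\Qpazo(s)\cap\B(\nu_0,r)$, bound its $W_1$-length via the estimates \eqref{eq:BasicEstimates} and a Gr\"onwall moment bound uniform over the ball (the paper delegates this to Lemma \ref{lem:Estimates}), and treat $(b)$ and $(c)$ symmetrically. Your explicit handling of the uniformity of the constant over $\B(\nu_0,r)$ and of the backward Gr\"onwall (or time-reversal) step in $(b)$ merely spells out what the paper dismisses as analogous.
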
	

\begin{proof}
We only prove the first statement as the others are analogous. To this end, fix some $\mu_0\in\mathcal P_1(\mathbb R^d)$ and $r>0$, and let $0 \leq s \leq t \leq T$. Then, for any $\mu_s\in\Qpazo(s)\cap \mathbb B(\mu_0,r)$, denote by $\mu(\cdot) \in \AC([0,T],\Pcal_1(\R^d))$ a solution of \eqref{eq:ContIncApp} satisfying the requirements of item $(a)$ with $\tau=s$. Then, one easily gets that 
\begin{equation*}
W_1(\mu_s \, ; \Qpazo(t)) \leq W_1(\mu_s, \mu(t)) \le (1+2c_T) \int_{s}^{t} M(\theta)\,\mathrm{d}\theta
\end{equation*}
where $c_T > 0$ is the moment bound provided in Lemma \ref{lem:Estimates}. Since $\mu_s \in \Qpazo(s)$ is arbitrary, we conclude that $\Delta_{\mu_0,r}(\Qpazo(s),\Qpazo(t))\le (1+2c_T) \int_{s}^t M(\theta)\,\mathrm{d}\theta$, that being true for any choice of $\mu_0\in\mathcal P_1(\mathbb R^d)$ and $r>0$. 
\end{proof}


\setcounter{definition}{0}
\setcounter{section}{0}
\renewcommand{\thesection}{C} 
\renewcommand{\thesubsection}{C} 

\subsection{Heuristics subtending Proposition \ref{prop:CastaingValadier}}
\label{AppCastaingValadier}

In this Appendix, we collect the abstract closure principle \cite[Theorem VI-4]{Castaing1977} and explain how it particularizes to Proposition \ref{prop:CastaingValadier} above. We refer to \cite[Chapter III]{Castaing1977} or \cite[Part I]{Rudin1991} for the definitions of the main objects appearing in the following statement.

\begin{theorem}[The Castaing-Valadier closure principle]
\label{thm:CastaingValadier}
Consider a topological space $(U,\tau_U)$ along with a locally convex Hausdorff topological vector space $(E,\tau_E)$ whose continuous dual $E'$ admits a countable set $\{e_n'\}_{n \in \N}$ that separates the points of $E$. Let furthermore $\Gpazo : [0,T] \times U \rightrightarrows E$ be a set-valued map with compact convex images such that $t \in [0,T] \rightrightarrows \Gpazo(t,x)$ is $\Lcal^1$-measurable for every $x\in U$, and $x \in U \rightrightarrows \Gpazo(t,x)$ is upper semicontinuous for a.e. $t\in [0,T]$, and fix two sequences of maps $\{x_n(\cdot)\}_{n \in \N}$ and $\{y_n(\cdot)\}_{n \in \N}$ satisfying the following.   
\begin{enumerate}
\item[$(i)$] The functions $x_n : [0,T] \to U$ are such that 
\begin{equation*}
x_n(t) ~\underset{n \to +\infty}{\longrightarrow}~ x(t)
\end{equation*}
for $\Lcal^1$-almost every $t \in [0,T]$ and some map $x : [0,T] \to U$. 
\item[$(ii)$] The functions are $y_n : [0,T] \to E$ are $\Lcal^1$-scalarly integrable and such that 
\begin{equation*}
\INTSeg{\xi(t) \langle e' , y(t) - y_n(t) \rangle_E \,}{t}{0}{T} ~\underset{n \to +\infty}{\longrightarrow}~ 0
\end{equation*}
for every $e' \in E'$, each $\xi(\cdot) \in L^{\infty}([0,T],\R)$, and some $\Lcal^1$-scalarly measurable function $y : [0,T] \to E$.  
\item[$(iii)$] It holds that $y_n(t) \in \Gpazo(t,x_n(t))$ for $\Lcal^1$-almost every $t \in [0,T]$ and each $n \geq 1$.
\end{enumerate}
Then, one has that $y(t) \in \Gpazo(t,x(t))$ for $\Lcal^1$-almost every $t \in [0,T]$.
\end{theorem}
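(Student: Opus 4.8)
The plan is to reduce the set-membership conclusion to countably many scalar support-function inequalities, which I would then establish through Mazur's lemma together with the upper semicontinuity of $\Gpazo(t,\cdot)$. For a compact convex $C \subseteq E$ and $e' \in E'$, write $\sigma(e', C) := \sup_{z \in C}\langle e', z\rangle$ for its support function. First I would enlarge the countable separating family $\{e_n'\}_{n \in \N}$ into the still countable set $\mathcal{D} \subseteq E'$ of all finite rational linear combinations of the $e_n'$, which generates the same separating vector-space topology $\sigma(E, \{e_n'\})$. The crucial structural point is that on any $\tau_E$-compact convex set $C$ the coarser topology $\sigma(E, \{e_n'\})$ is Hausdorff, so the identity $(C, \tau_E) \to (C, \sigma(E,\{e_n'\}))$ is a homeomorphism and $C$ stays compact and convex in the locally convex space $(E, \sigma(E,\{e_n'\}))$, whose continuous dual is exactly the linear span of $\{e_n'\}$. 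By Hahn-Banach separation in this space, a point $z$ belongs to $C$ if and only if $\langle e', z\rangle \le \sigma(e', C)$ for every $e'$ in that span, and by continuity of $e' \mapsto \sigma(e', C)$ on the compact set $C$ together with density it suffices to test $e' \in \mathcal{D}$. Thus the whole theorem reduces to proving that, for a.e. $t$, $\langle e', y(t)\rangle \le \sigma(e', \Gpazo(t, x(t)))$ for every $e' \in \mathcal{D}$.

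Second, I would record that for each fixed $e' \in \mathcal{D}$ the scalar map $x \mapsto \sigma(e', \Gpazo(t,x))$ is upper semicontinuous at $x(t)$ for a.e. $t$: the half-space $\{z : \langle e', z\rangle < \sigma(e', \Gpazo(t,x(t))) + \varepsilon\}$ is open and contains the compact set $\Gpazo(t,x(t))$, so upper semicontinuity of $\Gpazo(t,\cdot)$ supplies a neighbourhood of $x(t)$ on which the support function stays below $\sigma(e', \Gpazo(t,x(t))) + \varepsilon$. Combined with hypothesis $(i)$, this gives $\limsup_n \sigma(e', \Gpazo(t, x_n(t))) \le \sigma(e', \Gpazo(t,x(t)))$ for a.e. $t$, and since $y_n(t) \in \Gpazo(t, x_n(t))$ yields $\langle e', y_n(t)\rangle \le \sigma(e', \Gpazo(t,x_n(t)))$, the tail values are controlled pointwise.

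Third, I would upgrade the weak convergence $(ii)$ to pointwise control. For each $e' \in E'$ the scalar functions $t \mapsto \langle e', y_n(t)\rangle$ lie in $L^1([0,T])$ and, by $(ii)$ tested against $\xi \in L^{\infty}$, converge weakly in $L^1$ to $\langle e', y(t)\rangle$; hence $y_n \rightharpoonup y$ in the locally convex space of scalarly integrable maps carrying the seminorms $f \mapsto \int_0^T |\langle e', f(t)\rangle| dt$. Since $y$ is also the weak limit of every tail $\{y_n\}_{n \ge k}$, Mazur's lemma provides finite convex combinations $w_k = \sum_{n \ge k} \lambda_n^k y_n$ with $\int_0^T |\langle e', w_k(t) - y(t)\rangle| dt \to 0$ for all $e'$, and a diagonal extraction over the countable family $\mathcal{D}$ then gives $\langle e', w_k(t)\rangle \to \langle e', y(t)\rangle$ for a.e. $t$ and all $e' \in \mathcal{D}$ simultaneously. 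Fixing such a generic $t$ together with $e' \in \mathcal{D}$ and $\varepsilon > 0$, the second step yields $\langle e', y_n(t)\rangle \le \sigma(e', \Gpazo(t,x(t))) + \varepsilon$ for all large $n$; as $w_k(t)$ is a convex combination of the $y_n(t)$ with $n \ge k$, the same bound passes to $\langle e', w_k(t)\rangle$, and letting $k \to \infty$ then $\varepsilon \to 0$ gives $\langle e', y(t)\rangle \le \sigma(e', \Gpazo(t,x(t)))$. Taking the countable union of exceptional null sets over $e' \in \mathcal{D}$ and invoking the first step closes the argument.

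The main obstacle is the interplay between the two countability mechanisms: one must both turn the integral-weak convergence $(ii)$ into almost-everywhere convergence of a single sequence of convex combinations valid in all directions of $\mathcal{D}$ at once — which forces the diagonal extraction to be performed after Mazur rather than coordinate by coordinate — and justify that the countable determining family $\mathcal{D}$ genuinely characterises membership in the compact convex images, for which the transfer of compactness to the topology $\sigma(E,\{e_n'\})$ is essential. The delicate feature throughout is that $E$ carries no metric and $U$ is merely topological, so every continuity, compactness and closure statement has to be phrased through support functions and the separating functionals rather than through norms.
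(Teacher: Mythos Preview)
The paper does not actually prove this theorem: it is stated in Appendix~C as a quotation of \cite[Theorem VI-4]{Castaing1977}, and the only argument given there is the \emph{proof of Proposition~\ref{prop:CastaingValadier}}, which merely checks that the abstract hypotheses specialise correctly when $(U,\tau_U)=(\Pcal_1(\R^d),W_1)$ and $(E,\tau_E)=(C^0(\R^d,\R^d),\dsf_{cc})$. So there is no ``paper's own proof'' to compare against; you have supplied a self-contained argument where the authors chose to cite the literature.

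Your outline is essentially the classical Castaing--Valadier strategy and is correct. A couple of points deserve a sharper statement. First, the Mazur step is cleanest if you phrase it in the metrizable locally convex space $F:=\prod_{e'\in\mathcal D} L^1([0,T])$ with the product topology: its dual is the algebraic direct sum $\bigoplus_{e'\in\mathcal D} L^{\infty}([0,T])$, so hypothesis~$(ii)$ is precisely weak convergence $\Phi(y_n)\rightharpoonup\Phi(y)$ in $F$, where $\Phi(f)=(\langle e',f(\cdot)\rangle)_{e'\in\mathcal D}$; Mazur in this Fr\'echet space then yields a \emph{single} sequence of convex combinations $w_k\in\text{conv}\{y_n:n\ge k\}$ with $\langle e',w_k\rangle\to\langle e',y\rangle$ in $L^1$ for every $e'\in\mathcal D$ simultaneously, after which your diagonal subsequence for a.e.\ convergence is legitimate. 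Second, in the separation step, the ``continuity of $e'\mapsto\sigma(e',C)$'' you invoke is really the elementary estimate $|\sigma(\ell,C)-\sigma(\ell',C)|\le\sup_{w\in C}|\langle\ell-\ell',w\rangle|$, which is finite because $C$ is $\tau_E$-compact and each $e_n'$ is $\tau_E$-continuous; this is what lets you pass from a separating functional in $\text{span}\{e_n'\}$ to one in the rational span $\mathcal D$. With these two clarifications the argument closes without gaps.
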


\begin{proof}[Proof of Proposition \ref{prop:CastaingValadier}]
In our context, we let $(U,\tau_u)$ be the complete separable metric space $(\Pcal_1(\R^d),W_1(\cdot,\cdot))$ and $(E,\tau_E)$ be the separable Fréchet space $(C^0(\R^d,\R^d),\dsf_{cc}(\cdot,\cdot))$, whose continuous dual is isomorphic to the space of compactly supported vector-valued measures $\Mcal_c(\R^d,\R^d)$ through the pairing
\begin{equation*}
\langle \Bnu , \varphi \rangle_{C^0(\R^d,\R^d)} = \sum_{i=1}^d \INTDom{\varphi_i(x)}{\R^d}{\Bnu_i(x)}
\end{equation*}
for each $\Bnu \in \Mcal_c(\R^d,\R^d)$ and every $\varphi \in C^0(\R^d,\R^d)$, see e.g. \cite[Page 155 -- Proposition 14]{Bourbaki2007}. Furthermore, recall that the dual of every separable Fréchet space automatically supports a countable dense set which separates points, see e.g. \cite[p. 259]{K_1969}. By \cite[Page 83 -- Theorem III.36]{Castaing1977}), in separable Fréchet spaces, scalar measurability coincides with the standard measurability with respect to the native Borel $\sigma$-algebra induced by the metric. Hence, a mapping $v:[0,\tau]\to C^0\big(\mathbb R^d,\mathbb R^d\big)$ is measurable if and only if it is scalarly measurable, whereas the measurability of a map $\mu : [0,T] \to \Pcal_1(\R^d)$ is understood in the usual sense. Armed with this dictionary, it is straightforward to see that items $(i)$,$(ii)$ and $(iii)$ of Proposition \ref{prop:CastaingValadier} are equivalent to those same items from Theorem \ref{thm:CastaingValadier} above, from whence the conclusion follows. 
\end{proof}


\paragraph*{Data availability and conflict of interest.}

There is no data associated with this work, and the authors have no relevant financial or non-financial interests to disclose.

\bibliographystyle{plain}
{\small 
\bibliography{references}}

\begin{thebibliography}{10}

\bibitem{AmbrosioBS2021}
L.~Ambrosio, E.~Bru{\'e}, and D.~Semola.
\newblock {\em {Lectures on Optimal Transport}}, volume 130.
\newblock Springer, 2021.

\bibitem{Ambrosio2008}
L.~Ambrosio and N.~Gigli.
\newblock {Construction of the Parallel Transport in the Wasserstein Space}.
\newblock {\em Methods and Applications of Analysis}, 15(1):1--30, 2008.

\bibitem{AGS}
L.~Ambrosio, N.~Gigli, and G.~Savar{\'e}.
\newblock {\em {G}radient {F}lows in {M}etric {S}paces and in the {S}pace of
  {P}robability {M}easures}.
\newblock Lectures in Mathematics ETH Z\"urich. Birkh\"auser Verlag, 2008.

\bibitem{AubinSurvey1990}
J.-P. Aubin.
\newblock {A Survey of Viability Theory}.
\newblock {\em SIAM Journal on Control and Optimization}, 28(4):749--788, 1990.

\bibitem{Aubin2011}
J.-P. Aubin, A.~M Bayen, and P.~Saint-Pierre.
\newblock {\em {Viability Theory: New Directions}}.
\newblock Springer Science \& Business Media, 2011.

\bibitem{Aubin1984}
J.-P. Aubin and A.~Cellina.
\newblock {\em {Differential Inclusions}}.
\newblock Springer-Verlag, 1984.

\bibitem{Aubin1990}
J.-P. Aubin and H.~Frankowska.
\newblock {\em Set-{Valued} {Analysis}}.
\newblock Modern Birkh{\"a}user Classics. Birkh{\"a}user Basel, 1990.

\bibitem{Averboukh2018}
Y.~Averboukh.
\newblock {Viability Theorem for Deterministic Mean Field Type Control
  Systems}.
\newblock {\em Set-Valued and Variational Analysis}, 26(4):993--1008, 2018.

\bibitem{Badreddine2022}
Z.~Badreddine and H.~Frankowska.
\newblock {Solutions to Hamilton-Jacobi Equation on a Wasserstein Space}.
\newblock {\em Calculus of Variations and Partial Differential Equations},
  61(1):1--41, 2022.

\bibitem{Bebernes1970}
J.W. Bebernes and J.D. Schuur.
\newblock {The Wazewski Topological Method for Contingent Equations}.
\newblock {\em Annali di Matematica Pura e Applicata}, 87(1):271--279, 1970.

\bibitem{BogachevI}
V.I. Bogachev.
\newblock {\em {Measure Theory}}.
\newblock Springer, 2007.

\bibitem{ContInc}
B.~Bonnet and H.~Frankowska.
\newblock {Differential Inclusions in Wasserstein Spaces: The Cauchy-Lipschitz
  Framework}.
\newblock {\em Journal of Differential Equations}, 271:594--637, 2021.

\bibitem{ViabCDC}
B.~Bonnet and H.~Frankowska.
\newblock {Viability and Exponentially Stable Trajectories for Differential
  Inclusions in Wasserstein Spaces}.
\newblock {\em 2022 IEEE 61st Conference on Decision and Control (CDC)}, pages
  5086--5091, 2022.

\bibitem{ContIncPp}
B.~Bonnet-Weill and H.~Frankowska.
\newblock {Carathéodory Theory and A Priori Estimates for Continuity
  Inclusions in the Space of Probability Measures}.
\newblock {\em Nonlinear Analysis}, 247:113595, 2024.

\bibitem{ViabPp}
B.~Bonnet-Weill and H.~Frankowska.
\newblock {On the Viability and Invariance of Proper Sets under Continuity
  Inclusions in Wasserstein Spaces}.
\newblock {\em SIAM Journal on Mathematical Analysis}, 56(3):2863--2914, 2024.

\bibitem{Bourbaki2007}
N.~Bourbaki.
\newblock {\em {\'E}l{\'e}ments de {Math{\'e}matique}. {Int{\'e}gration}.
  {Chapitres} 1-4}.
\newblock Berlin: Springer, reprint of the 1965 original edition, 2007.

\bibitem{Castaing1977}
C.~Castaing and M.~Valadier.
\newblock {\em {Convex Analysis and Measurable Multifunctions}}.
\newblock Spinger-Verlag, 1977.

\bibitem{CavagnariMQ2021}
G.~Cavagnari, A.~Marigonda, and M.~Quincampoix.
\newblock {Compatibility of State Constraints and Dynamics for Multiagent
  Control Systems}.
\newblock {\em Journal of Evolution Equations}, 21(4):4491--4537, 2021.

\bibitem{DaudinS2024}
S.~Daudin and B.~Seeger.
\newblock {A Comparison Principle for Semilinear Hamilton-Jacobi-Bellman
  Equations in the Wasserstein Space}.
\newblock {\em Calculus of Variations and Partial Differential Equations},
  63(4):106, 2024.

\bibitem{Diestel1993}
J.~Diestel, W.M. Ruess, and W.~Schachermayer.
\newblock {On Weak Compactness in $L^1(\mu,X)$}.
\newblock {\em Proceedings of the American Mathematical Society}, 118:447--453,
  1993.

\bibitem{DiestelUhl}
J.~Diestel and J.J.Jr Uhl.
\newblock {\em {V}ector {M}easures}.
\newblock Number~15 in Mathematical Surveys. American Mathematical Society,
  1977.

\bibitem{Fornasier2019}
M.~Fornasier, S.~Lisini, C.~Orrieri, and G.~Savar{\'e}.
\newblock {Mean-Field Optimal Control as Gamma-Limit of Finite Agent Controls}.
\newblock {\em European Journal of Applied Mathematics}, 30(6):1153--1186,
  2019.

\bibitem{Frankowska1993}
H.~Frankowska.
\newblock {Lower Semicontinuous Solutions of Hamilton-Jacobi-Bellman
  Equations}.
\newblock {\em SIAM Journal on Control and Optimization}, 31(1):257--272, 1993.

\bibitem{Frankowska1996}
H.~Frankowska and S.~Plaskacz.
\newblock {A Measurable Upper Semicontinuous Viability Theorem for Tubes}.
\newblock {\em Nonlinear Analysis. Theory, Methods $\&$ Applications},
  26(3):565--582, 1996.

\bibitem{Frankowska1995}
H.~Frankowska, S.~Plaskacz, and T.~Rzezuchowski.
\newblock {Measurable Viability Theorems and the Hamilton-Jacobi-Bellman
  Equation}.
\newblock {\em Journal of Differential Equations}, 116(2):265--305, 1995.

\bibitem{HJBWasserstein}
W~Gangbo, T~Nguyen, , and A.~Tudorascu.
\newblock {H}amilton-{J}acobi {E}quations in the {W}asserstein {S}pace.
\newblock {\em Methods and Applications of Analysis}, 15(2):155--184, 2008.

\bibitem{Gangbo2019}
W.~Gangbo and A.~Tudorascu.
\newblock {On Differentiability in the Wasserstein Space and Well-Posedness for
  Hamilton-Jacobi Equations}.
\newblock {\em Journal de Math\'ematiques Pures et Appliqu\'ees}, 125:119--174,
  2019.

\bibitem{Gigli2012}
N.~Gigli.
\newblock {\em Second {Order} {Analysis} on $(\mathcal{P}_2(M),W_2)$}, volume
  216 of {\em Memoirs of the American Mathematical Society}.
\newblock AMS, 2012.

\bibitem{AnalysisBanachSpaces}
T~Hyt{\"o}nen, J.~Van~Neerven, M.~Veraar, and L.~Weis.
\newblock {\em Analysis in Banach Spaces}, volume~12.
\newblock Springer, 2016.

\bibitem{Jimenez2024}
C.~Jimenez.
\newblock {Equivalence Between Strict Viscosity Solution and Viscosity Solution
  in the Wasserstein Space and Regular Extension of the Hamiltonian in
  $L^2_{\mathbb{P}}$}.
\newblock {\em Journal of Convex Analysis}, 31(2):619--670, 2024.

\bibitem{K_1969}
G.~K\"othe.
\newblock {\em Topological {Vector} {Spaces}}, volume Band 159 of {\em Die
  Grundlehren der Mathematischen Wissenschaften}.
\newblock Springer-Verlag New York, Inc., New York, 1969.

\bibitem{Lanzetti2024bis}
N.~Lanzetti, A.~Terpin, and F~D\"orfler.
\newblock {Variational Analysis in the Wasserstein Space}.
\newblock {\em arXiv preprint arXiv:2406.10676}, page~72, 2024.

\bibitem{Nagumo1942}
M.~Nagumo.
\newblock {\"Uber die Lage der Integralkurven Gew\"ohnlicher
  Differentialgleichungen}.
\newblock In {\em Proceedings of the Physico-Mathematical Society of Japan, 3rd
  Series}, volume~24, pages 551--559, 1942.

\bibitem{Papageorgiou1986}
N.S. Papageorgiou.
\newblock {Random Fixed Point Theorems for Measurable Multifunctions in Banach
  Spaces}.
\newblock {\em Proceedings of the American Mathematical Society},
  97(3):507--514, 1986.

\bibitem{Rudin1987}
W.~Rudin.
\newblock {\em {Real and Complex Analysis}}.
\newblock Mathematical Series. McGraw-Hill International Editions, 1987.

\bibitem{Rudin1991}
W.~Rudin.
\newblock {\em {Functional Analysis}}.
\newblock New York: McGraw-Hill, 1991.

\bibitem{Warner1958}
S.~Warner.
\newblock {The Topology of Compact Convergence on Continuous Function Spaces}.
\newblock {\em Duke Mathematical Journal}, 25(2):265--282, 1958.

\end{thebibliography}

\end{document}